\newcommand{\tabincell}[2]{\begin{tabular}{@{}#1@{}}#2\end{tabular}}
\newtheorem{theorem}{Theorem}
\newtheorem{remark}{Remark}
\newtheorem{proposition}{Proposition}
\def\norm#1{\|#1\|}
\begin{document}

\title{\bf Adaptive Sieving with PPDNA: Generating Solution Paths of Exclusive Lasso Models\footnotemark[1]}
\author{Meixia Lin\footnotemark[2], \quad Yancheng Yuan\footnotemark[3], \quad Defeng Sun\footnotemark[4], \quad Kim-Chuan Toh\footnotemark[5]}
\date{\today}
\maketitle

\renewcommand{\thefootnote}{\fnsymbol{footnote}}
\footnotetext[1]{{\bf Funding:} Yancheng Yuan is supported by the National Research Foundation, Singapore under its International Research Centres in Singapore Funding Initiative, Defeng Sun is supported in part by the Hong Kong Research Grant Council grant PolyU 153014/18P and Kim-Chuan Toh by ARF grant R146-000-257-112 of the Ministry of Education of Singapore.}
\footnotetext[2]{Department of Mathematics, National University of Singapore, 10 Lower Kent Ridge Road, Singapore ({\tt lin\_meixia@u.nus.edu}).}
\footnotetext[3]{School of Computing, National University of Singapore, 13 Computing Drive, Singapore ({\tt yanchengyuanmath@gmail.com}).}
\footnotetext[4]{Department of Applied Mathematics, The Hong Kong Polytechnic University, Hung Hom, Hong Kong ({\tt defeng.sun@polyu.edu.hk}).}
\footnotetext[5]{Department of Mathematics and Institute of Operations Research and Analytics, National University of Singapore, 10 Lower Kent Ridge Road, Singapore ({\tt mattohkc@nus.edu.sg}).}
\renewcommand{\thefootnote}{\arabic{footnote}}

\begin{abstract}
   The exclusive lasso (also known as elitist lasso) regularization has become popular recently due to its superior performance on structured sparsity. Its complex nature poses difficulties for the computation of high-dimensional machine learning models involving such a regularizer. In this paper, we propose an adaptive sieving (AS) strategy for generating solution paths of machine learning models with the exclusive lasso regularizer, wherein a sequence of reduced problems with much smaller sizes need to be solved. In order to solve these reduced problems, we propose a highly efficient dual Newton method based proximal point algorithm (PPDNA). As important ingredients, we systematically study the proximal mapping of the weighted exclusive lasso regularizer and the corresponding generalized Jacobian. These results also make popular first-order algorithms for solving exclusive lasso models practical. Various numerical experiments  for the exclusive lasso models have demonstrated the effectiveness of the AS strategy for generating solution paths and the superior performance of the PPDNA.
\end{abstract}

\medskip
\noindent
{\bf Keywords:} Adaptive sieving, dual Newton method, proximal point algorithm, exclusive lasso
\\[5pt]
{\bf AMS subject classification:} 90C06, 90C25, 90C90

\section{Introduction}
\label{sec:intro}
For a given feature matrix $A=[a_1,a_2,\cdots,a_n] \in \mathbb{R}^{m \times n}$, we are interested in the machine learning models of the form:
\begin{align}
\min_{x \in \mathbb{R}^n} \ \Big\{ h(Ax)-\langle c,x\rangle + \lambda p(x)\Big\},\label{eq: general_ml_model}
\end{align}
where $c \in \mathbb{R}^{n}$ is a given vector, $h: \mathbb{R}^m \to \mathbb{R}$ is a convex twice continuously differentiable function, $p: \mathbb{R}^n \to (-\infty,+\infty]$, a closed and proper convex function, is a regularizer which usually induces structured sparsity for the solutions to the model, $\lambda > 0$ is a hyper-parameter which controls the trade-off between the loss and sparsity level of the solutions.

Many regularizers have been proposed to enforce sparsity with desirable structure in the predictors learned by machine learning models. For example, the lasso model \cite{tibshirani1996regression} can induce sparsity in the predictors but without structured patterns, and the group lasso model \cite{yuan2006model} can induce inter-group level sparsity. In some applications, intra-group level sparsity is desirable, which means that not only features from different groups, but also features in a seemingly cohesive group are competing to survive. One application comes from performing portfolio selections both across and within sectors in order to diversify the risk across different sectors. To achieve this intra-group sparsity, the exclusive lasso regularizer was proposed in \cite{zhou2010exclusive} (also named as elitist lasso \cite{kowalski2009sparse}), originally for multi-task learning. Since then, it has also been widely used in other applications later such as image processing \cite{zhang2016robust}, sparse feature clustering \cite{yamada2017localized} and NMR spectroscopy \cite{campbell2017within}. Let $w\in \mathbb{R}_{++}^n$ be a weight vector and $\mathcal{G} := \{g_1,\cdots,g_l\}$ be an index partition of the features such that $\bigcup_{j=1}^l g_j = \{1, 2, \dots, n\}$ and $g_j \bigcap g_k = \emptyset$ for any $j\neq k$. The corresponding  weighted exclusive lasso regularizer is defined as
\begin{align}
\Delta^{\mathcal{G},w}(x) :=\sum_{j=1}^l \|w_{g_j} \circ x_{g_j}\|_1^2, \quad \forall\, x \in \mathbb{R}^n,\label{eq: exclusive-lasso-regularizer}
\end{align}
where ``$\circ$'' denotes the Hadamard product, and $x_{g_j}$ denotes the sub-vector extracted from $x$ based on the index set $g_j$. Naturally, when solving exclusive lasso models, we can expect that each $x_{g_j}$ has nonzero coordinates under mild conditions, which means that every group has representatives.

One of the key tasks in applying machine learning models successfully in practice is in selecting suitable values for the hyper-parameters, in particular, the value of $\lambda$ in \eqref{eq: general_ml_model}. The model selection procedure \cite{ding2018model} usually requires one to solve the model \eqref{eq: general_ml_model} for a sequence of $\lambda$. To reduce the computation time of obtaining a solution path, especially for high-dimensional cases, various feature screening rules, which attempt to drop some inactive features based on prior analysis, have been proposed. Tibshirani et al. \cite{tibshirani2012strong} proposed a strong screening rule (SSR) based on the ``unit-slope" bound assumption for the lasso model and generalized it to some other cases. Although the SSR performs well in practice, it may screen out some active features by mistake. To avoid this, safe screening rules have been studied. The first safe rule was proposed by Ghaoui et al. \cite{ghaoui2010safe} for lasso models. Later on, Wang et al. \cite{wang2013lasso, wang2015lasso} proposed a dual prototype projection based screening rule (DPP) and an enhanced version (EDPP) for lasso and group lasso models via carefully analyzing the geometry of corresponding dual problems. Other safe screening rules, like Sphere test \cite{xiang2016screening}, have been proposed via different strategies for estimating a compact region of the optimal solution to the dual problem. Recently, Zeng et al. \cite{zeng2017efficient} combined the SSR and EDPP to propose a hybrid safe-strong screening rule, which is implemented in an R package {\tt biglasso} \cite{zeng2017biglasso}. Unfortunately, all these screening rules are difficult to be generalized to the exclusive lasso models. In this paper, we propose an adaptive sieving (AS) strategy for generating solution paths of machine learning models of the form \eqref{eq: general_ml_model}, including those with the exclusive lasso regularizer. The proposed AS strategy is directly based on the Karush-Kuhn-Tucker (KKT) condition, and it does not depend heavily on the specific form of the regularizer, as long as the subdifferential of the regularizer $p(\cdot)$ can be characterized explicitly. Numerical experiments demonstrate that the AS strategy is very effective in reducing the problem dimensions.
	
In order to bring the AS strategy for exclusive lasso models into full play, we need an efficient algorithm to solve the involved reduced problems to satisfactory level of accuracy. As we will see in Section \ref{sec: screening}, the reduced problems are in the same form of the original model, but with smaller sizes. Unfortunately, current state-of-the-art algorithms for solving exclusive lasso models, such as the iterative least squares algorithm (ILSA) \cite{kong2014exclusive,yamada2017localized}, the coordinate descent method \cite{campbell2017within}, are very time-consuming to obtain a solution with high accuracy, even for problems with medium sizes. In addition, popular first-order algorithms, such as the accelerated proximal gradient method (APG) \cite{zhang2016robust}, FISTA \cite{beck2009fast} and alternating direction method of multipliers (ADMM) \cite{eckstein1992douglas,glowinski1975approximation}, have not been widely used to solve exclusive lasso models. The main reason may lie in the fact that the proximal mapping of the exclusive lasso regularizer, which is the key ingredient for the efficient implementation of algorithms mentioned above, has not been systematically studied yet. Yoon and Hwang provided a closed-form solution for the proximal mapping of $\Delta^{\mathcal{G},w}(\cdot)$ in \cite{yoon2017combined} but unfortunately it is incorrect. Kowalski mentioned the proximal mapping in \cite{kowalski2009sparse}, but the derivation contains some errors, and this result is not known to most  researchers in the optimization and machine learning communities. In this paper, we systematically study the exclusive lasso regularizer, and provide a closed-form solution to the proximal mapping of the general weighted exclusive lasso regularizer \eqref{eq: exclusive-lasso-regularizer}. Such a closed-form solution is important for the practical efficiency of some algorithmic frameworks, such as APG and ADMM, for solving exclusive lasso models. However, as we shall see in the numerical experiments, even with the closed-form solution of the proximal mapping, first-order algorithms, such as APG and ADMM, are not efficient enough. To overcome this computational challenge, we design a highly efficient second-order type algorithm, the dual Newton method based proximal point algorithm (PPDNA), to solve exclusive lasso models. As a key ingredient of the PPDNA, we carefully derive the generalized Jacobian of the proximal mapping of the weighted exclusive lasso regularizer. Numerical results demonstrate the superior performance of the PPDNA against ILSA, APG and ADMM for solving exclusive lasso models.

We summarize our main contributions in this paper as follows.
\begin{itemize}[noitemsep,topsep=0pt]
	\item[1] We propose a simple yet powerful adaptive sieving strategy for generating solution paths of general machine learning models, including those with the exclusive lasso regularizer. To the best of our knowledge, this is the first practical screening rule for exclusive lasso models. Our AS strategy allows each reduced problem to be solved inexactly, and is proved to converge in finite iterations. Extensive numerical experiments are also conducted to demonstrate the power of the AS strategy for obtaining solution paths of exclusive lasso models.
	\item[2] We develop a highly efficient dual Newton method based proximal point algorithm to solve the reduced problems in the AS strategy for solving the exclusive lasso model. We prove that the error bound condition holds for commonly used exclusive lasso models, which guarantees the superlinear convergence of the PPDNA. Numerical experiments are also performed to demonstrate the superior performance of the PPDNA against other state-of-the-art algorithms for solving the exclusive lasso models.
	\item[3] As key ingredients of the PPDNA, we systematically study the proximal mapping of the weighted exclusive lasso regularizer, and the corresponding generalized Jacobian. These results are also critical in computing the key projection step of various first-order algorithms for solving the exclusive lasso models.
\end{itemize}

The rest of the paper is organized as follows. In Section \ref{sec: screening}, we propose an adaptive sieving strategy for solution paths of a general machine learning model. In order to efficiently solve the reduced problems involved in the AS strategy for exclusive lasso models, we design a dual Newton method based proximal point algorithm in Section \ref{sec:pppa}. In Section \ref{sec:proxJacobian}, we systematically study the weighted exclusive lasso regularizer, through providing the closed-form solution to the proximal mapping and its generalized Jacobian. Numerical experiments on both synthetic data and real data are presented in Section \ref{sec:numerical}, which demonstrate the power of the AS strategy with the PPDNA for obtaining solution paths of exclusive lasso models. In the end, we conclude the paper.

\vspace{0.2cm}
\noindent\textbf{Notations and preliminaries:} Denote $\mathbb{B}_{\infty}$ ($\mathbb{B}_{2}$) as the infinity norm ($2$-norm) unit ball. For any $z\in \mathbb{R}$, ${\rm sign}(z)$ denotes the sign function of $z$, and $z^{+}:=\max\{z,0\}$, $z^{-}:=\min\{z,0\}$. We use ``${\rm Diag}(x)$" to denote the diagonal matrix whose diagonal is given by the vector $x$, and use ``${\rm Diag}(X_1,\cdots,X_n)$" to denote the block diagonal matrix whose $i$-th diagonal block is the matrix $X_i$, $i=1,\cdots,n$. For any self-adjoint positive semidefinite linear operator ${\cal M}:\mathbb{R}^n\rightarrow \mathbb{R}^n$, we define $\langle x,x'\rangle_{\cal M}:=\langle x,{\cal M}x'\rangle$, and $\|x\|_{\cal M}:=\sqrt{\langle x,x\rangle_{\cal M}}$ for all $x,x'\in \mathbb{R}^n$. For a given subset ${\cal C}$ of $\mathbb{R}^n$, we denote the weighted distance of $x\in \mathbb{R}^n$ to ${\cal C}$ as ${\rm dist}_{\cal M}(x,{\cal C}):=\inf_{x'\in {\cal C}}\|x-x'\|_{\cal M}$. The largest eigenvalue of ${\cal M}$ is denoted as $\lambda_{\max}({\cal M})$.

Let  $q:\mathbb{R}^n\rightarrow (-\infty,\infty]$ be a  closed and proper convex function. The conjugate function of $q$ is defined as $q^*(z):=\sup_{x\in\mathbb{R}^n}\{\langle x,z\rangle-q(x)\}$. The Moreau envelope of $q$ at $x$ is defined by
\begin{align*}
{\rm E}_q(x):=\min_{y\in \mathbb{R}^n}\Big\{ q(y)+\frac{1}{2}\|y-x\|^2\Big\},
\end{align*}
and the corresponding proximal mapping ${\rm Prox}_q(x)$ is defined as the unique optimal solution of the above problem. It is known that for any $x\in \mathbb{R}^n$, $\nabla {\rm E}_q(x)=x-{\rm Prox}_q(x)$, and ${\rm Prox}_q(\cdot)$ is Lipschitz continuous with modulus $1$ \cite{moreau1965proximite,rockafellar1976monotone}.

In order to study the weighted exclusive lasso regularizer $\Delta^{\mathcal{G},w}(\cdot)$ defined in \eqref{eq: exclusive-lasso-regularizer}, we use the following notations. For $j=1\cdots,l$, we define the linear mapping ${\cal P}_j:\mathbb{R}^n \rightarrow \mathbb{R}^{|g_j|}$ as ${\cal P}_j x=x_{g_j}$ for all $x\in \mathbb{R}^n$, and ${\cal P}=[{\cal P}_1;\cdots;{\cal P}_l]$. Let $n_j=\sum_{k=1}^j |g_k|$ and $n_0=0$. Denote $x^{(j)}$ as the sub-vector extracted from $x$ based on the index set $\{n_{j-1}+1,n_{j-1}+2,\cdots,n_{j}\}$ for $j=1,\cdots,l$. According to these notations, we have
\begin{align}
\Delta^{\mathcal{G},w}(x) =\sum_{j=1}^l\|({\cal P}w)^{(j)}\circ ({\cal P}x)^{(j)}\|_1^2, \quad \forall\, x \in \mathbb{R}^n.\label{eq: reformulation_exclusive}
\end{align}

\section{An adaptive sieving strategy}
\label{sec: screening}
In this section, we propose an adaptive sieving strategy, and apply it to obtain solution paths of general machine learning models, including those with the exclusive lasso regularizer. From now on, we consider a more general form of \eqref{eq: general_ml_model}, given by
\begin{align}
\label{eq: lasso_model}
\min_{x\in \mathbb{R}^n}\ \displaystyle \Big\{\Phi(x) + \lambda p(x)\Big\},\tag{$\mbox{P}_{\lambda}$}
\end{align}
where $\Phi:\mathbb{R}^n\rightarrow \mathbb{R}$ is convex twice continuously differentiable. Denote the optimal solution set of \eqref{eq: lasso_model} as $\Omega_{\lambda}$, and the proximal residual function $R_{\lambda}:\mathbb{R}^n\rightarrow \mathbb{R}^n$ associated with \eqref{eq: lasso_model} as
\begin{align}
R_{\lambda}(x):=x-{\rm Prox}_{\lambda p}(x-\nabla \Phi(x)),\quad \forall x\in \mathbb{R}^n.\label{eq:residual}
\end{align}
The KKT condition of \eqref{eq: lasso_model} implies that $\bar{x}\in \Omega_{\lambda}$ if and only if $R_{\lambda}(\bar{x})=0$. In this paper, we assume that for any $\lambda > 0$, the solution set $\Omega_{\lambda}$ to \eqref{eq: lasso_model} is nonempty and compact. For many popular machine learning models, this assumption is satisfied automatically, as discussed in \cite[Section 2.1]{zhou2017unified}.

To reduce the computation time of obtaining solution paths of various machine learning models, different kinds of feature screening rules have been proposed. But they are limited by the fact that they are highly dependent on the positive homogeneity of the regularizers and they implicitly require the reduced problems to be solved exactly. We first briefly discuss the ideas behind two most popular screening rules, namely the strong screening rule (SSR) \cite{tibshirani2012strong} and the dual prototype projection based screening rule (DPP) \cite{wang2013lasso, wang2015lasso}. For convenience, we take the lasso model as an illustration, with $\Phi(x) = \frac{1}{2}\|Ax - b\|^2$ and $p(x) = \|x\|_1$ in \eqref{eq: lasso_model}. Let $x^{*}(\lambda)\in \Omega_{\lambda}$. The KKT condition implies that:
\begin{align*}
a_i^T\theta^*(\lambda) \in \left\{
\begin{array}{ll}
\{ \lambda \ {\rm sign}(x^*(\lambda))\} & \mbox{if} \; x^*(\lambda)_i \not= 0 \\
\left[-\lambda, \lambda\right] & \mbox{if} \; x^*(\lambda)_i = 0\\
\end{array}
\right.,
\end{align*}
where $\theta^*(\lambda)$ is the optimal solution of the associated dual problem:
\begin{align}
\label{eq: dual_problem_lasso}
\max_{\theta\in \mathbb{R}^m} \ \Big\{\frac{1}{2}\|b\|^2 - \frac{1}{2}\|\theta - b\|^2\; \mid \;|a_i^T\theta| \leq \lambda,\ i = 1, 2, \dots, n\Big\}.
\end{align}
The existing screening rules for the lasso model are based on the fact that $x^*(\lambda)_i = 0$ if $|a_i^T\theta^*(\lambda)| < \lambda$. The difference is how to estimate $a_i^T\theta^*(\lambda)$ based on an optimal solution $x^*(\tilde{\lambda})$ of $({\rm P}_{\tilde{\lambda}})$ for some $\tilde{\lambda} > \lambda$, without solving the dual of $({\rm P}_{\lambda})$. The SSR \cite{tibshirani2012strong} discards the $i$-th predictor if
\begin{align*}
|a_i^T\theta^*(\tilde{\lambda}) | \leq 2\lambda-\tilde{\lambda},
\end{align*}
by assuming the ``unit slope" bound condition:
\begin{align*}
|a_i^T\theta^*(\lambda_1) - a_i^T\theta^*(\lambda_2)| \leq |\lambda_1-\lambda_2|, \quad \forall \lambda_1, \lambda_2 > 0.
\end{align*}
Since this assumption may fail, the SSR may screen out some active features by mistake. Also, the SSR only works for consecutive hyper-parameters with a small gap, since it requires $\lambda > \frac{\tilde{\lambda}}{2}$. Wang et al. \cite{wang2013lasso, wang2015lasso} proposed the DPP by carefully analyzing the properties of the optimal solution to the dual problem \eqref{eq: dual_problem_lasso}. The key idea is, if we could estimate a region $\Theta_{\lambda}$ containing $\theta^*(\lambda)$, then
\begin{align*}
\sup_{\theta \in \Theta_{\lambda}} |a_i^T\theta| < \lambda \quad \Longrightarrow \quad x^*(\lambda)_i = 0.
\end{align*}
They estimate the region $\Theta_{\lambda}$ by realizing that the optimal solution of \eqref{eq: dual_problem_lasso} is the  projection onto a polytope. As we can see, a tighter estimation of $\Theta_{\lambda}$ will induce a better safe screening rule.
	
Unfortunately, these popular screening rules are difficult to be generalized to the exclusive lasso model. First, as we will derive later in \eqref{eq: subdiff_exclusive_2}, the subdifferential of the exclusive lasso regularizer is much more complicated than the lasso regularizer, whose subdifferential is separable. In order to apply these screening rules, we need to at least get a tight lower bound of $\|w_{g_j} \circ x^*(\lambda)_{g_j}\|_1$ for each feature group $g_j$, which is difficult. Second, since the exclusive lasso regularizer is not positively homogeneous, the optimal solution to the dual problem is not the projection onto some convex set \cite[Theorem 13.2]{rockafellar1970convex}.

To overcome the challenges just mentioned, we propose an adaptive sieving strategy, which does not depend on the specific form of the regularizer and thus can be applied to a general regularizer. More importantly, due to the adaptive nature of the AS strategy, it can sieve out a very large proportion of inactive features, as we shall see in the numerical experiments. In addition, the AS strategy also allows the involved reduced problems to be solved inexactly, provided the corresponding error tolerances can be analyzed explicitly.

\subsection{The adaptive sieving strategy}
In this section, we propose an adaptive sieving strategy, directly based on the KKT condition, for generating solution paths of general machine learning models, including those with the exclusive lasso regularizer. Here we suppose that the model contains a regularization term which induces sparsity in the solutions. The details of the AS strategy could be found in Algorithm \ref{alg:screening}.

\begin{algorithm}[H]\small
	\caption{Adaptive sieving strategy for solving {($\mbox{P}_{\lambda}$})} 
	\label{alg:screening}
	\begin{algorithmic}[1]
		\STATE \textbf{Input}: a sequence of hyper-parameter: $\lambda_0 > \lambda_1 > \dots > \lambda_k > 0$, and tolerance $\epsilon \geq 0$.
		\STATE \textbf{Output}: a solution path: $x^*(\lambda_0), x^*(\lambda_1), x^*(\lambda_2), \dots, x^*(\lambda_k)$.
		\STATE \textbf{Initialization}: for $\lambda_0 > 0$, solve
		\begin{align}
		x^{*}(\lambda_0) \in \underset{ x\in \mathbb{R}^n} {\arg\min} \  \Big\{\Phi(x)  + \lambda_0 p(x) - \langle \delta_0,x\rangle\Big\},\label{eq: lambda0_problem}
		\end{align}
		where $\delta_0\in \mathbb{R}^n$ is an error vector such that $\|\delta_0\|\leq \epsilon$. Let
		\begin{align*}
		I^*(\lambda_0) := \{j \;\mid\; x^*(\lambda_0)_j \neq  0,\ j=1,\cdots,n\}.
		\end{align*}
		\FOR{$i = 1, 2, \dots, k$}
		\STATE \textbf{1}. Let $I^{0}(\lambda_i) = I^*(\lambda_{i-1})$. Find
		\begin{align*}
		x^{0}(\lambda_i) \in \underset{ x\in \mathbb{R}^n} {\arg\min} \  \Big\{\Phi(x)  + \lambda_i p(x) - \langle \delta_i^0,x\rangle	\ \mid \ x_{\bar{I}^{0}(\lambda_i)}=0\Big\},
		\end{align*}
		where $\bar{I}^0(\lambda_i)$ denotes the complement of $I^0(\lambda_i)$ in $\{1,\ldots,n\}$, $\delta_i^0\in \mathbb{R}^n$ is an error vector such that $\|\delta_i^0\|\leq \epsilon/\sqrt{2}$, $(\delta_i^0)_{\bar{I}^{0}(\lambda_i)}=0$.
		
		\textbf{2}. Compute $R_{\lambda_i}(x^0(\lambda_i))$ and set $l=0$.
		\WHILE{$\|R_{\lambda_i}(x^l(\lambda_i))\|> \epsilon$}
		\STATE \textbf{3.1}. Create $J^{l+1}(\lambda_i)$:
		\begin{align}
		J^{l+1}(\lambda_i) = \Big\{ j\in \bar{I}^{l}(\lambda_i)\; \mid \; -\Big(  \nabla \Phi(x^l(\lambda_{i}))\Big)_j \notin \lambda_i \Big(\partial p(x^l(\lambda_{i}))+\frac{\epsilon}{\lambda_i \sqrt{2|\bar{I}^{l}(\lambda_i)|}}\mathbb{B}_{\infty}
		\Big)_j\Big\},\label{eq: create_J}
		\end{align}
		where $\bar{I}^l(\lambda_i)$ denotes the complement of $I^l(\lambda_i)$ in $\{1,\ldots,n\}$, and $\big({\cal C}\big)_j$ denotes the projection of the set ${\cal C}$ onto the $j$-th dimension. Then update $I^{l+1}(\lambda_i)$ as:		
		\begin{align*}
		I^{l+1}(\lambda_i) \leftarrow I^{l}(\lambda_i) \cup J^{l+1}(\lambda_i) .
		\end{align*}
		\STATE \textbf{3.2}. Solve the following constrained problem:
		\begin{align}
		x^{l+1}(\lambda_i) \in \underset{ x\in \mathbb{R}^n} {\arg\min} \  \Big\{\Phi(x)  + \lambda_i p(x) - \langle \delta_i^{l+1},x\rangle \ \mid \ x_{\bar{I}^{l+1}(\lambda_i)}=0\Big\},\label{eq: constrained}
		\end{align}
		where $\delta_i^{l+1}\in \mathbb{R}^n$ is an error vector such that $\|\delta_i^{l+1}\|\leq \epsilon/\sqrt{2}$, $(\delta_i^{l+1})_{\bar{I}^{l+1}(\lambda_i)}=0$.
		\STATE \textbf{3.3}: Compute $R_{\lambda_i}(x^{l+1}(\lambda_i))$ and set $l\leftarrow l+1$.
		\ENDWHILE
		\STATE \textbf{4}. Set $x^*(\lambda_i)=x^l(\lambda_{i})$ and
		$I^*(\lambda_i)=I^{l}(\lambda_i)$.
		\ENDFOR
	\end{algorithmic}
\end{algorithm}

Note that in Algorithm 1, the introduction of the error vectors $\delta_0$, $\{\delta_i^{l+1}\}$ in \eqref{eq: lambda0_problem} and \eqref{eq: constrained} implies  that the corresponding minimization problems could be solved inexactly. It is important to note that the vectors are not a priori given but they are the errors incurred when the original problems (with $\delta_0=0$ in \eqref{eq: lambda0_problem} and $\delta_i^{l+1}=0$ in \eqref{eq: constrained}) are solved inexactly. We take the problem \eqref{eq: lambda0_problem} as an example to explain how the error vector $\delta_0$ is obtained in the following proposition.
\begin{proposition}
	The updating rule of $x^*(\lambda_0)$ in \eqref{eq: lambda0_problem} can be interpreted as follows: compute
	\begin{align*}
	x^*(\lambda_0):={\rm Prox}_{\lambda_0 p}(\hat{x}-\nabla \Phi(\hat{x})),
	\end{align*}
	where $\hat{x}$ is an approximate solution to $(P_{\lambda_0})$ such that
	\begin{align}
	\|R_{\lambda_0}(\hat{x})+\nabla \Phi({\rm Prox}_{\lambda_0 p}(\hat{x}-\nabla \Phi(\hat{x})))-\nabla \Phi(\hat{x})\|\leq \epsilon/\sqrt{2}.\label{eq: control_delta0}
	\end{align}
	If $\nabla \Phi(\cdot)$ is Lipschitz continuous with modulus $L$, the condition \eqref{eq: control_delta0} could be achieved by
	\begin{align*}
	\|R_{\lambda_0}(\hat{x})\|\leq \frac{\epsilon}{\sqrt{2}(1+L)}.
	\end{align*}
\end{proposition}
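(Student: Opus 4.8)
The plan is to read off the error vector $\delta_0$ directly from $\hat x$ and then check, using the subgradient characterization of the proximal mapping, that the resulting $x^*(\lambda_0)$ is an \emph{exact} minimizer of the perturbed problem \eqref{eq: lambda0_problem}. Concretely: given an approximate solution $\hat x$ of $(P_{\lambda_0})$, define $x^*(\lambda_0):={\rm Prox}_{\lambda_0 p}(\hat x-\nabla\Phi(\hat x))$ and set $\delta_0 := R_{\lambda_0}(\hat x)+\nabla\Phi(x^*(\lambda_0))-\nabla\Phi(\hat x)$, which is exactly the vector whose norm is controlled in \eqref{eq: control_delta0}. With this definition in hand, the whole statement reduces to two short verifications.

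First I would verify optimality, i.e. that $x^*(\lambda_0)$ solves \eqref{eq: lambda0_problem} with this $\delta_0$. By the defining property of the proximal mapping, $x^*(\lambda_0)={\rm Prox}_{\lambda_0 p}(\hat x-\nabla\Phi(\hat x))$ is equivalent to the inclusion $(\hat x-\nabla\Phi(\hat x))-x^*(\lambda_0)\in\lambda_0\,\partial p(x^*(\lambda_0))$. By \eqref{eq:residual} and the choice of $x^*(\lambda_0)$ we have $\hat x-x^*(\lambda_0)=R_{\lambda_0}(\hat x)$, so the left-hand side equals $R_{\lambda_0}(\hat x)-\nabla\Phi(\hat x)$; adding $\nabla\Phi(x^*(\lambda_0))$ to both sides of the inclusion yields $\delta_0\in\nabla\Phi(x^*(\lambda_0))+\lambda_0\,\partial p(x^*(\lambda_0))$, which is precisely the first-order optimality condition $0\in\nabla\Phi(x)-\delta_0+\lambda_0\,\partial p(x)$ of \eqref{eq: lambda0_problem} (convexity of $\Phi+\lambda_0 p$ makes this condition sufficient as well).

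Second I would bound $\|\delta_0\|$. Since $\|\delta_0\|$ is literally the quantity on the left of \eqref{eq: control_delta0}, the hypothesis gives $\|\delta_0\|\le\epsilon/\sqrt 2\le\epsilon$, which is compatible with the requirement $\|\delta_0\|\le\epsilon$ in Algorithm \ref{alg:screening}. For the last assertion, when $\nabla\Phi$ is Lipschitz with modulus $L$, I would use the triangle inequality together with $x^*(\lambda_0)-\hat x=-R_{\lambda_0}(\hat x)$ to get $\|\delta_0\|\le\|R_{\lambda_0}(\hat x)\|+\|\nabla\Phi(x^*(\lambda_0))-\nabla\Phi(\hat x)\|\le(1+L)\|R_{\lambda_0}(\hat x)\|$; hence $\|R_{\lambda_0}(\hat x)\|\le\epsilon/(\sqrt 2\,(1+L))$ forces \eqref{eq: control_delta0}.

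There is no real obstacle here: the argument is just careful bookkeeping with the identity $u={\rm Prox}_{\lambda_0 p}(v)\iff v-u\in\lambda_0\partial p(u)$ and the definition of $R_{\lambda_0}$. The only point deserving a moment of care is conceptual rather than technical: $\delta_0$ is not prescribed in advance but is reconstructed \emph{a posteriori} from $\hat x$, and one must check that this reconstructed $\delta_0$ makes $x^*(\lambda_0)$ an exact (not merely approximate) solution of \eqref{eq: lambda0_problem} — which is exactly what the first verification above establishes.
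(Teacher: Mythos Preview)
Your proposal is correct and matches the paper's proof almost line for line: the paper defines the same $\delta_0:=R_{\lambda_0}(\hat x)+\nabla\Phi(x^*(\lambda_0))-\nabla\Phi(\hat x)$, verifies the same subgradient inclusion via $R_{\lambda_0}(\hat x)=\hat x-x^*(\lambda_0)$, and handles the Lipschitz case by the identical triangle-inequality estimate. The only addition in the paper is a short opening paragraph arguing that an $\hat x$ satisfying \eqref{eq: control_delta0} actually exists (take any sequence $x^i$ converging to a true minimizer of $(P_{\lambda_0})$ and observe that the corresponding $\delta^i\to 0$ by continuity), which you may want to include for completeness.
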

\begin{proof}
	Let $\{x^{i}\}$ be a sequence that converges to a solution of the problem \eqref{eq: lambda0_problem} with $\delta_0=0$. For any $i= 1,2,\cdots$, define $\delta^{i}:=R_{\lambda_0}(x^{i})+\nabla \Phi({\rm Prox}_{\lambda_0 p}(x^{i}-\nabla \Phi(x^{i})))-\nabla \Phi(x^{i})$. By the continuous differentiability of $\Phi(\cdot)$ and \cite[Lemma 4.5]{mengyu2015inexact}, we know that $\lim_{i\rightarrow \infty}\|\delta^{i}\|=0$, which implies the existence of $\hat{x}$ in \eqref{eq: control_delta0}.
	
	Next we explain the reason why the updating rule of $x^*(\lambda_0)$ in \eqref{eq: lambda0_problem} could be interpreted as the one stated in the proposition. Since $R_{\lambda_0}(\hat{x})=\hat{x}-{\rm Prox}_{\lambda_0 p}(\hat{x}-\nabla \Phi(\hat{x}))=\hat{x}-x^*(\lambda_0)$, we have
	\begin{align*}
	R_{\lambda_0}(\hat{x})-\nabla \Phi(\hat{x})\in \lambda_0 \partial p(x^*(\lambda_0)).
	\end{align*}
	If we choose $\delta_0:=R_{\lambda_0}(\hat{x})+\nabla \Phi(x^*(\lambda_0))-\nabla \Phi(\hat{x})$, then we have
	\begin{align*}
	\delta_0 \in \nabla \Phi(x^*(\lambda_0))+\lambda_0 \partial p(x^*(\lambda_0)),
	\end{align*}
	which means that $x^*(\lambda_0)$ is the exact solution of the problem \eqref{eq: lambda0_problem} with the above $\delta_0$. Moreover,
	\begin{align*}
	\|\delta_0\|= \|R_{\lambda_0}(\hat{x})+\nabla \Phi(x^*(\lambda_0))-\nabla \Phi(\hat{x})\|\leq \epsilon/\sqrt{2}.
	\end{align*}
	The remaining conclusion follows naturally from the assumption of Lipschitz property of $\nabla\Phi(\cdot)$ and the triangle inequality.
\end{proof}

Now, we show the convergence properties of Algorithm \ref{alg:screening} in the following proposition, which imply that the algorithm will terminate after a finite number of iterations.

\begin{proposition}\label{prop: convergence_whileloop}
	For each $i\in \{1,\cdots,k\}$, the while loop in Algorithm \ref{alg:screening} will terminate after a finite number of iterations.
\end{proposition}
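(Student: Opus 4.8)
The plan is to combine a monotonicity argument on the working index sets with a KKT-type estimate showing that a non-terminated pass of the while loop must strictly enlarge that index set. Fix $i\in\{1,\dots,k\}$ and abbreviate $R=R_{\lambda_i}$, $x^{l}=x^{l}(\lambda_i)$, $I^{l}=I^{l}(\lambda_i)$, $\bar{I}^{l}=\bar{I}^{l}(\lambda_i)$, $J^{l+1}=J^{l+1}(\lambda_i)$, $\delta^{l}=\delta_i^{l}$. Since $I^{l+1}=I^{l}\cup J^{l+1}$ in Step~3.1 of Algorithm~\ref{alg:screening}, we always have $I^{0}\subseteq I^{1}\subseteq\cdots\subseteq\{1,\dots,n\}$; hence it suffices to prove that whenever the while loop does not terminate at iteration $l$, i.e.\ $\|R(x^{l})\|>\epsilon$, the set $J^{l+1}$ created in \eqref{eq: create_J} is nonempty. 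Then $I^{l+1}\supsetneq I^{l}$, so $|I^{l}|$ strictly increases at each pass and the loop can run at most $n$ times.

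I would establish the contrapositive: if $J^{l+1}=\emptyset$, then $\|R(x^{l})\|\le\epsilon$. Two facts feed into this. First, $x^{l}$ is an \emph{exact} minimizer of the constrained problem \eqref{eq: constrained} (or \eqref{eq: lambda0_problem} when $l=0$); since ${\rm dom}\,p=\mathbb{R}^{n}$, the subdifferential sum rule applies, and the optimality condition for \eqref{eq: constrained} restricted to the coordinates in $I^{l}$ yields some $\xi\in\partial p(x^{l})$ with $\big(\nabla\Phi(x^{l})+\lambda_i\xi-\delta^{l}\big)_j=0$ for all $j\in I^{l}$; recall that $\delta^{l}$ is supported on $I^{l}$ with $\|\delta^{l}\|\le\epsilon/\sqrt{2}$. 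Second, $J^{l+1}=\emptyset$ means precisely that for every $j\in\bar{I}^{l}$ there exist $\eta_j\in\big(\partial p(x^{l})\big)_j$ and a scalar $\rho_j$ with $|\rho_j|\le\epsilon/\sqrt{2|\bar{I}^{l}|}$ such that $-(\nabla\Phi(x^{l}))_j=\lambda_i\eta_j+\rho_j$. Because the subdifferential of the weighted exclusive lasso regularizer at any fixed point is a product of intervals (this will be made explicit in \eqref{eq: subdiff_exclusive_2}), I can glue these data into a single $\bar{\xi}\in\partial p(x^{l})$ with $\bar{\xi}_j=\xi_j$ on $I^{l}$ and $\bar{\xi}_j=\eta_j$ on $\bar{I}^{l}$. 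Setting $\delta:=\nabla\Phi(x^{l})+\lambda_i\bar{\xi}\in\nabla\Phi(x^{l})+\lambda_i\partial p(x^{l})$, we get $\delta_j=(\delta^{l})_j$ on $I^{l}$ and $|\delta_j|\le\epsilon/\sqrt{2|\bar{I}^{l}|}$ on $\bar{I}^{l}$, whence
\[
\|\delta\|^{2}=\|\delta^{l}\|^{2}+\sum_{j\in\bar{I}^{l}}\delta_j^{2}\le\frac{\epsilon^{2}}{2}+|\bar{I}^{l}|\cdot\frac{\epsilon^{2}}{2|\bar{I}^{l}|}=\epsilon^{2}.
\]

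To close the argument I would read $\delta-\nabla\Phi(x^{l})\in\lambda_i\partial p(x^{l})$ in proximal form: it is equivalent to $x^{l}={\rm Prox}_{\lambda_i p}\big(x^{l}+\delta-\nabla\Phi(x^{l})\big)$. Comparing with the definition \eqref{eq:residual}, $R(x^{l})=x^{l}-{\rm Prox}_{\lambda_i p}(x^{l}-\nabla\Phi(x^{l}))$, and using that ${\rm Prox}_{\lambda_i p}(\cdot)$ is nonexpansive,
\[
\|R(x^{l})\|=\big\|{\rm Prox}_{\lambda_i p}(x^{l}+\delta-\nabla\Phi(x^{l}))-{\rm Prox}_{\lambda_i p}(x^{l}-\nabla\Phi(x^{l}))\big\|\le\|\delta\|\le\epsilon,
\]
contradicting $\|R(x^{l})\|>\epsilon$. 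Therefore $J^{l+1}\neq\emptyset$, and by the monotonicity of $\{I^{l}\}$ the while loop terminates after at most $n$ iterations. (If $\bar{I}^{l}=\emptyset$, the same computation applies with the $\bar{I}^{l}$-block absent, giving $\|R(x^{l})\|\le\|\delta^{l}\|\le\epsilon$ directly, so the loop has already stopped.)

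The step I expect to require the most care is the gluing of the coordinatewise information coming from $J^{l+1}=\emptyset$ into one global subgradient $\bar{\xi}\in\partial p(x^{l})$, together with the bookkeeping that splits the $\epsilon$-budget so that the $I^{l}$-block (the subproblem error, bounded by $\epsilon/\sqrt{2}$ in the $2$-norm) and the $\bar{I}^{l}$-block (coordinatewise bounded by $\epsilon/\sqrt{2|\bar{I}^{l}|}$, hence by $\epsilon/\sqrt{2}$ in the $2$-norm) each contribute at most $\epsilon^{2}/2$ to $\|\delta\|^{2}$. This is exactly why the thresholds $\epsilon/\sqrt{2}$ for the subproblem residuals and $\epsilon/(\lambda_i\sqrt{2|\bar{I}^{l}(\lambda_i)|})$ inside \eqref{eq: create_J} are chosen as they are, and it is the only place where the explicit (box-type) structure of $\partial\Delta^{\mathcal{G},w}(\cdot)$ is used.
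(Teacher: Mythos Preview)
Your proposal is correct and follows essentially the same route as the paper: prove the contrapositive that $J^{l+1}=\emptyset$ forces $\|R_{\lambda_i}(x^{l})\|\le\epsilon$ by combining the KKT condition of the constrained subproblem on $I^{l}$ with the coordinatewise information from $J^{l+1}=\emptyset$ on $\bar{I}^{l}$, gluing into a single subgradient, passing to proximal form, and invoking nonexpansiveness of ${\rm Prox}_{\lambda_i p}$. The paper's proof is organized identically (it builds the combined perturbation $\tilde{\delta}_i^{l}$ and bounds $\|\tilde{\delta}_i^{l}\|\le\epsilon$ in the same $\epsilon/\sqrt{2}+\epsilon/\sqrt{2}$ split); you are simply more explicit than the paper about why the gluing into a global element of $\partial p(x^{l})$ is legitimate, correctly pointing to the box structure of $\partial\Delta^{\mathcal{G},w}$.
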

\begin{proof}
	Given $i\in \{1,\cdots,k\}$. We first prove that when $\|R_{\lambda_i}(x^l(\lambda_i))\|>\epsilon\geq 0$ for some $l\geq 0$, the index set $J^{l+1}(\lambda_i)$ defined in \eqref{eq: create_J} is nonempty. We prove this by contradiction. Suppose that $J^{l+1}(\lambda_i)=\emptyset$, which means
	\begin{align*}
	-\Big(\nabla \Phi(x^l(\lambda_{i}))\Big)_j \in \lambda_i \Big(\partial p(x^l(\lambda_{i})+\frac{\epsilon}{\lambda_i\sqrt{2|\bar{I}^{l}(\lambda_i)|}}\mathbb{B}_{\infty}\Big)_j,\quad \forall\; j\in \bar{I}^{l}(\lambda_i).
	\end{align*}
	That is, there exists a vector $\hat{\delta}_i^{l}\in \mathbb{R}^n$ with $(\hat{\delta}_i^{l})_{I^{l}(\lambda_i)}=0$, $\|\hat{\delta}_i^{l}\|_{\infty}\leq \frac{\epsilon}{\sqrt{2|\bar{I}^{l}(\lambda_i)|}}$, such that
	\begin{align}
	-\Big( \nabla \Phi(x^l(\lambda_{i}))-\hat{\delta}_i^{l}
	\Big)_j \in \lambda_i \Big(\partial p(x^l(\lambda_{i})\Big)_j,\quad \forall\; j\in
	\bar{I}^{l}(\lambda_i).
	\label{eq: J_empty}
	\end{align}
	Note that $x^{l}(\lambda_i)$ satisfies
	\begin{align*}
	x^{l}(\lambda_i) \in \underset{ x\in \mathbb{R}^n} {\arg\min} \  \Big\{\Phi(x)  + \lambda_i p(x) - \langle \delta_i^{l},x\rangle \ \mid \ x_{\bar{I}^{l}(\lambda_i)}=0\Big\},
	\end{align*}
	where $\delta_i^{l}\in \mathbb{R}^n$ is an error vector such that $\|\delta_i^{l}\|\leq \epsilon/\sqrt{2}$ and $(\delta_i^{l})_{\bar{I}^{l}(\lambda_i)}=0$. By the KKT condition of the above minimization problem, we know that there exists $y\in \mathbb{R}^{|\bar{I}^{l}(\lambda_i)|}$ such that
	\begin{align*}
	\left\{
	\begin{aligned}
	&0\in\Big(\nabla \Phi(x^l(\lambda_{i}))-\delta_i^{l}\Big)_{I^{l}(\lambda_i)}+\lambda_i \Big(\partial p(x^{l}(\lambda_i))\Big)_{I^{l}(\lambda_i)},\\
	&0\in \Big(\nabla \Phi(x^l(\lambda_{i}))-\delta_i^{l}\Big)_{\bar{I}^{l}(\lambda_i)}
	+\lambda_i \Big(\partial p(x^{l}(\lambda_i))\Big)_{\bar{I}^{l}(\lambda_i)}-y,\\
	& \Big( x^{l}(\lambda_i)\Big)_{\bar{I}^{l}(\lambda_i)}=0.
	\end{aligned}\right.
	\end{align*}
	Therefore, together with \eqref{eq: J_empty}, we have
	\begin{align*}
	- \nabla \Phi(x^l(\lambda_{i}))+\tilde{\delta}_i^{l}\in \lambda_i  \partial p(x^{l}(\lambda_i)),
	\end{align*}
	where $\tilde{\delta}_i^{l}\in \mathbb{R}^n$ is defined as $(\tilde{\delta}_i^{l})_{I^{l}(\lambda_i)}=(\delta_i^{l})_{I^{l}(\lambda_i)}$, $(\tilde{\delta}_i^{l})_{\bar{I}^{l}(\lambda_i)}=(\hat{\delta}_i^{l})_{\bar{I}^{l}(\lambda_i)}$, which means
	\begin{align*}
	x^{l}(\lambda_i)= {\rm Prox}_{\lambda_i p}(x^{l}(\lambda_i)- \nabla \Phi(x^l(\lambda_{i}))+\tilde{\delta}_i^{l}).
	\end{align*}
	As a result, it holds that
	\begin{align*}
	\|R_{\lambda_i}(x^{l}(\lambda_i))\|&=\|x^{l}(\lambda_i)- {\rm Prox}_{\lambda_i p}(x^{l}(\lambda_i)- \nabla \Phi(x^l(\lambda_{i})))\|\\
	&=\|{\rm Prox}_{\lambda_i p}(x^{l}(\lambda_i)- \nabla \Phi(x^l(\lambda_{i}))+\tilde{\delta}_i^{l})- {\rm Prox}_{\lambda_i p}(x^{l}(\lambda_i)- \nabla \Phi(x^l(\lambda_{i})))\|\leq \|\tilde{\delta}_i^{l}\|\leq \epsilon,
	\end{align*}
	which is a contradiction.
	
	Therefore, we have that for $l\geq 0$, $J^{l+1}(\lambda_i)\neq \emptyset$  as long as $R_{\lambda_i}(x^l(\lambda_i))>\epsilon$ . In other words, new indices will be added to the index set $I^{l+1}(\lambda_i)$ as long as the KKT residual has not achieved at the required accuracy. Since the total number of features $n$ is finite, the while loop in Algorithm \ref{alg:screening} will terminate after a finite number of iterations.
\end{proof}

Note that the KKT condition of \eqref{eq: lambda0_problem} implies that
\begin{align*}
x^*(\lambda_0)  = {\rm Prox}_{\lambda_0 p}(x^*(\lambda_0)- \nabla \Phi(x^*(\lambda_0))+\delta_0).
\end{align*}
By using the property that the  proximal mapping ${\rm{Prox}}_{\lambda_0 p}(\cdot)$ is Lipschitz continuous with modulus 1, we have the following estimation
\begin{align*}
\|R_{\lambda_0}(x^*(\lambda_0))\|&=\| {\rm Prox}_{\lambda_0 p}(x^*(\lambda_0)- \nabla \Phi(x^*(\lambda_0))+\delta_0)- {\rm Prox}_{\lambda_0 p}(x^*(\lambda_0)-\nabla \Phi(x^*(\lambda_0)))\|\leq \|\delta_0\|\leq \epsilon.
\end{align*}
Thus, together with Proposition \ref{prop: convergence_whileloop}, we obtain the following theorem regarding the convergence   properties of Algorithm \ref{alg:screening} directly.
\begin{theorem}\label{thm: convergence_screening}
	The solution path  $x^*(\lambda_0), x^*(\lambda_1),\dots, x^*(\lambda_k)$  generated by Algorithm \ref{alg:screening} are approximate optimal solutions to the problems $(\mbox{P}_{\lambda_0}), (\mbox{P}_{\lambda_1}),\cdots, (\mbox{P}_{\lambda_k})$, respectively, in the sense that \[\|R_{\lambda_i}(x^*(\lambda_i))\|\leq \epsilon,\quad  i=0,1,\cdots,k.\]
\end{theorem}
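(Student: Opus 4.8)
The plan is to split the statement into the base index $i=0$ and the remaining indices $i=1,\dots,k$, since Algorithm \ref{alg:screening} enforces the target accuracy by two different mechanisms in these two cases.

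For $i=0$, I would argue exactly as in the computation displayed immediately before the theorem. First I would write down the (perturbed) KKT condition of the inexactly solved problem \eqref{eq: lambda0_problem}, namely $\delta_0 \in \nabla\Phi(x^*(\lambda_0)) + \lambda_0\,\partial p(x^*(\lambda_0))$, which is equivalent to $x^*(\lambda_0) = {\rm Prox}_{\lambda_0 p}(x^*(\lambda_0)-\nabla\Phi(x^*(\lambda_0))+\delta_0)$. Then, recalling that ${\rm Prox}_{\lambda_0 p}(\cdot)$ is Lipschitz continuous with modulus $1$, I would compare this identity with the definition $R_{\lambda_0}(x^*(\lambda_0)) = x^*(\lambda_0)-{\rm Prox}_{\lambda_0 p}(x^*(\lambda_0)-\nabla\Phi(x^*(\lambda_0)))$ to obtain $\|R_{\lambda_0}(x^*(\lambda_0))\| \le \|\delta_0\| \le \epsilon$, using the input bound $\|\delta_0\|\le\epsilon$ from the Initialization step.

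For $i\ge 1$, I would invoke Proposition \ref{prop: convergence_whileloop} to conclude that the while loop terminates after finitely many iterations, say at iteration index $l$. Termination can only occur because the loop guard $\|R_{\lambda_i}(x^l(\lambda_i))\| > \epsilon$ fails, i.e. $\|R_{\lambda_i}(x^l(\lambda_i))\| \le \epsilon$; the same conclusion holds trivially in the degenerate case where the loop body is never entered, since that happens precisely when $\|R_{\lambda_i}(x^0(\lambda_i))\| \le \epsilon$ already after Step 1. As Step 4 then sets $x^*(\lambda_i) = x^l(\lambda_i)$, the bound $\|R_{\lambda_i}(x^*(\lambda_i))\| \le \epsilon$ follows at once. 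Combining the two cases gives the claimed estimate for all $i = 0,1,\dots,k$.

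I do not expect a genuine obstacle here: the theorem is essentially a short corollary of Proposition \ref{prop: convergence_whileloop} together with the nonexpansiveness of the proximal mapping, and all the substantive work — showing that $J^{l+1}(\lambda_i)\neq\emptyset$ whenever the residual is still too large, which is what drives finite termination — is already carried out in that proposition. The only point that warrants a line of care is the $i=0$ case, because the algorithm performs no explicit residual test there, so one must convert the perturbed optimality condition with $\|\delta_0\|\le\epsilon$ into a residual bound, and this is exactly where the modulus-$1$ Lipschitz property of ${\rm Prox}_{\lambda_0 p}(\cdot)$ is used.
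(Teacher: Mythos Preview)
Your proposal is correct and mirrors the paper's own argument essentially verbatim: the paper handles $i=0$ by the perturbed KKT identity plus nonexpansiveness of ${\rm Prox}_{\lambda_0 p}(\cdot)$ in the displayed computation immediately preceding the theorem, and then states that the theorem follows ``directly'' from this together with Proposition~\ref{prop: convergence_whileloop} for $i\ge 1$. Your explicit mention of the degenerate case where the while loop is never entered is a nice clarification not spelled out in the paper, but the approach is the same.
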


\subsection{Practical implementations}
As one can see, there are two key tasks involved in applying the proposed AS strategy for solving machine learning models:
\begin{itemize}
	\item[(1)] practical and efficient implementation of the construction of $J^{l+1}(\lambda_i)$ in \eqref{eq: create_J};
	\item[(2)] an efficient and robust algorithm for solving the problem \eqref{eq: lambda0_problem} and the constrained problem \eqref{eq: constrained} to the required accuracy.
\end{itemize}
Next, we discuss these two aspects in details.

\paragraph{Construction of $J^{l+1}(\lambda_i)$.} For the construction of $J^{l+1}(\lambda_i)$ in \eqref{eq: create_J}, we need to fully characterize $\partial p(x)$ for any $x\in \mathbb{R}^n$. Fortunately, the subdifferential of many popular regularizers, such as the lasso, group lasso and exclusive lasso regularizers, could be explicitly computed. We take the weighted exclusive lasso regularizer as an example. From the reformulation of the weighted exclusive lasso regularizer in \eqref{eq: reformulation_exclusive}, it could be seen that
\begin{align}
\partial \Delta^{\mathcal{G},w}(x)=\{{\cal P}^T u\in \mathbb{R}^n: \  u^{(j)}\in \partial q_j(({\cal P}x)^{(j)}),\ j = 1,\cdots, l\}, \label{eq: subdiff_exclusive}
\end{align}
where $q_j(\cdot):=\|({\cal P}w)^{(j)}\circ \cdot\|_1^2$ for $j=1,\cdots, l$. From the chain rule of subdifferential \cite[Theorem 2.3.9]{clarke1990optimization}, we know that for each $j$,
\begin{align*}
\partial q_j(v)= \{2\|({\cal P}w)^{(j)}\circ v\|_1\hat{v}: \ \hat{v} \in \partial \|({\cal P}w)^{(j)}\circ v\|_1\}.
\end{align*}
As we know that given any $\bar{n}\geq 1$ and $\beta\in \mathbb{R}^{\bar{n}}_{++}$,
\begin{align*}
\partial \|\beta\circ z\|_1={\rm SGN}_{\beta}(z),\quad \forall z\in \mathbb{R}^{\bar{n}},
\end{align*}
where
\begin{align}
{\rm SGN}_{\beta}(z):=\left\{u\in \mathbb{R}^{\bar{n}}:u_j\in \left\{\begin{aligned}
&\{\beta_j {\rm{sign}}(z_j)\} && \mbox{if}\ z_j \not= 0 \\
&[-\beta_j,\beta_j] && \mbox{if}\ z_j=0\\
\end{aligned}\right.,\ j = 1,\cdots,\bar{n}\right\}.\label{eq: define_SGN}
\end{align}
For simplicity, we denote ${\rm SGN}(z):={\rm SGN}_{e}(z)$, where $e$ is the vector of all ones. Together with the definition of ${\cal P}$, we could equivalently write \eqref{eq: subdiff_exclusive} as
\begin{align}
\partial \Delta^{\mathcal{G},w}(x)=\left\{u\in \mathbb{R}^n: u_{g_j}=2\|w_{g_j}\circ x_{g_j}\|_1 v_{g_j},\  v_{g_j} \in {\rm SGN}_{w_{g_j}}(x_{g_j})\right\}.\label{eq: subdiff_exclusive_2}
\end{align}

In addition to the subdifferential of the weighted exclusive lasso regularizer, we also summarize the characterization of $\partial p(x)$ for some other commonly used regularizers in Table \ref{tab: partial_p}. Note that the sum rule of subdifferential \cite[Theorem 23.8]{rockafellar1970convex} is used in the cases of the generalized lasso and the elastic net.

\begin{table}[H]\fontsize{8pt}{11pt}\selectfont
	\setlength{\abovecaptionskip}{0.2pt}
	\setlength{\belowcaptionskip}{0.2pt}
	\caption{Subdifferential of $p(\cdot)$.}\label{tab: partial_p}
	\renewcommand\arraystretch{1.2}
	\centering
	\begin{tabular}{|c|c|c|}
		\hline
		Name of regularizer & $p(\cdot)$ & $\partial p(\cdot)$  \\
		\hline
		lasso  & $p(x)=\|x\|_1$ & $\partial p(x)={\rm SGN}(x)$ \\
		\hline
		generalized lasso & $p(x)=\|x\|_1+\beta \|Bx\|_1$, $B\in \mathbb{R}^{s\times n}$ & $\partial p(x)=\{u+\beta B^T v\mid u\in {\rm SGN}(x),\ v\in {\rm SGN}(Bx)\}$\\
		\hline
		elastic net & $p(x)=\|x\|_1+\beta \|x\|_2^2$ & $\partial p(x)=\{u+2\beta x\mid u\in {\rm SGN}(x) \}$ \\
		\hline
		group lasso & $p(x)=\sum_{j=1}^l \beta_{j}\|x_{g_j}\|_2$ &  $\partial p(x)=\left\{u\in \mathbb{R}^n: u_{g_j}\in \left\{\begin{aligned}
		&\{\beta_{j}\frac{x_{g_j}}{\|x_{g_j}\|}\} && \mbox{if}\ \|x_{g_j}\|\neq 0 \\
		&\beta_{j} \mathbb{B}_2 && \mbox{otherwise}
		\end{aligned}\right.\right\}$\\
		\hline
	\end{tabular}
\end{table}

\paragraph{Algorithm for the subroutine.} Next we discuss how to solve the problem \eqref{eq: lambda0_problem} and the constrained problem \eqref{eq: constrained} in the AS strategy for the weighted exclusive lasso model, that is, $\Phi(x)=h(Ax)-\langle c,x\rangle$ and $p(x)=\Delta^{\mathcal{G},w}(x)$. The constrained minimization problem \eqref{eq: constrained} could equivalently be written as
\begin{align*}
x^{l+1}(\lambda_i) \in \underset{ x\in \mathbb{R}^n} {\arg\min} \  \Big\{h(Ax)-\langle c,x\rangle  + \lambda_i \sum_{j=1}^l \|w_{g_j\cap I^{l+1}(\lambda_i)}\circ x_{g_j\cap I^{l+1}(\lambda_i)}\|_1^2- \langle \delta_i^{l+1},x\rangle
\ \mid \ x_{\bar{I}^{l+1}(\lambda_i)}=0\Big\}.
\end{align*}
Define $\hat{w}=w_{I^{l+1}(\lambda_i)}$,
\begin{align*}
\hat{g}_j:=\{k\in \{1,\cdots,|I^{l+1}(\lambda_i)|\}: \mbox{the $k$-th element of } I^{l+1}(\lambda_i)\mbox{ belongs to } g_j\},\quad j = 1,\cdots,l.
\end{align*}
Note that $\widehat{\mathcal{G}} := \{\hat{g}_1,\cdots,\hat{g}_l\}$ is an index partition such that $\bigcup_{j=1}^l \hat{g}_j = \{1,2, \dots,|I^{l+1}(\lambda_i)|\}$ and $\hat{g}_j \bigcap \hat{g}_k = \emptyset$ for any $j\neq k$. We also define the proper closed convex function $\hat{p}:\mathbb{R}^{|I^{l+1}(\lambda_i)|}\rightarrow \mathbb{R}$ by
\begin{align*}
\hat{p}(z)=\sum_{j=1}^l \|\hat{w}_{\hat{g}_j}\circ z_{\hat{g}_j}\|_1^2,\quad \forall z\in\mathbb{R}^{|I^{l+1}(\lambda_i)|}.
\end{align*}
The presence of the error vector $\delta_i^{l+1}$ in \eqref{eq: constrained} indicates that the computation of $x^{l+1}(\lambda_i)$ can be interpreted as follows: compute
\begin{align}
z^*\approx \underset{ z\in \mathbb{R}^{|I^{l+1}(\lambda_i)|}} {\arg\min} \  \Big\{h(A_{I^{l+1}(\lambda_i)}z)-\langle c_{I^{l+1}(\lambda_i)},z\rangle  + \lambda_i \hat{p}(z)
\Big\},\label{eq: reduced}
\end{align}
approximately to the accuracy that $\|(\delta_i^{l+1})_{I^{l+1}(\lambda_i)}\|\leq \epsilon/\sqrt{2}$, where $A_I$ is the matrix consisting of the columns of $A$ indexed by $I$, and
\begin{align}
(\delta_i^{l+1})_{I^{l+1}(\lambda_i)}\in A_{I^{l+1}(\lambda_i)}^T \nabla h(A_{I^{l+1}(\lambda_i)} z^*)- c_{I^{l+1}(\lambda_i)}+\lambda_i \partial \hat{p}(z^*),\label{eq: reduced_accuracy}
\end{align}
then extend $z^*\in \mathbb{R}^{|I^{l+1}(\lambda_i)|}$ to $x^{l+1}(\lambda_i)\in \mathbb{R}^n$ as
\begin{align*}
\Big( x^{l+1}(\lambda_i)\Big)_{I^{l+1}(\lambda_i)}=z^*,\quad \Big( x^{l+1}(\lambda_i)\Big)_{\bar{I}^{l+1}(\lambda_i)}=0.
\end{align*}
The problem \eqref{eq: reduced} is in the same form as $(\mbox{P}_{\lambda_i})$ but with a smaller problem size. Therefore, all we need is an efficient and robust algorithm for solving the problems in the form of $(\mbox{P}_{\lambda})$. In this paper, we propose a dual Newton method based proximal point algorithm for solving the exclusive lasso models. Besides being theoretically efficient, in the numerical experiments, it is also demonstrated to be highly efficient and robust practically.

\section{A dual Newton method based proximal point algorithm}
\label{sec:pppa}

For the machine learning model \eqref{eq: general_ml_model} that we are interested in,
we write the problem \eqref{eq: lasso_model} as
\begin{align}
\label{eq: Convex-composite-program}
\min_{x\in \mathbb{R}^n} \ \Big\{ f(x):= h(Ax)  - \langle c,x\rangle + \lambda p(x)\Big\},
\end{align}
When we take $p(\cdot)=\Delta^{\mathcal{G},w}(\cdot)$, where $\Delta^{\mathcal{G},w}(\cdot)$ is the weighted exclusive lasso regularizer defined in \eqref{eq: exclusive-lasso-regularizer}. The problem \eqref{eq: Convex-composite-program} is the so-called weighted exclusive lasso model. According to the notation in Section \ref{sec: screening}, the optimal solution set to \eqref{eq: Convex-composite-program} is denoted as $\Omega_{\lambda}$, which is assumed to be nonempty and compact.

We aim to design a preconditioned proximal point algorithm (PPA) to solve the convex composite programming problem \eqref{eq: Convex-composite-program}. For the case when $p(\cdot)=\Delta^{\mathcal{G},w}(\cdot)$, we prove that ${\cal T}_f:=\partial f$ satisfies a certain error bound condition which guarantees that the proposed preconditioned PPA for solving the weighted exclusive lasso model has the asymptotic superlinear convergence property. As for the PPA subproblem, we design a dual Newton method, which is proved to have superlinear or even quadratic convergence.

\subsection{A preconditioned PPA for the weighted exclusive lasso model \eqref{eq: Convex-composite-program}}
For any starting point $x^0\in \mathbb{R}^n$, the preconditioned PPA generates a sequence $\{x^k\}\subseteq \mathbb{R}^n$ by the following approximate rule for solving \eqref{eq: Convex-composite-program}:
\begin{align}
x^{k+1}\approx {\cal P}_k(x^k):=\underset{x\in\mathbb{R}^n}{\arg\min}\ \Big\{f_k(x):=h(Ax)  - \langle c,x\rangle +\lambda  p(x)+\frac{1}{2\sigma_k}\|x-x^k\|_{{\cal M}}^2\Big\},\label{eq:pre_ppa}
\end{align}
where $\{\sigma_k\}$ is a sequence of nondecreasing positive real numbers $(\sigma_k\uparrow \sigma_{\infty}\leq \infty)$ and ${\cal M}=I_n+\tau A^T A$ for some given parameter $\tau> 0$. We should mention that the choice of this special ${\cal M}$ instead of the usual identity operator is important for us to obtain the dual of \eqref{eq:pre_ppa} as a smooth unconstrained problem. To ensure the convergence of the preconditioned PPA, we use the following stopping criteria:
\begin{align}
\|x^{k+1}-{\cal P}_k(x^k)\|_{{\cal M}}&\leq \epsilon_k,\quad \epsilon_k \geq 0,\quad \sum_{k=0}^{\infty}\epsilon_k <\infty,
\tag{A}\label{eq:stopA_pre_ppa}\\
\|x^{k+1}-{\cal P}_k(x^k)\|_{{\cal M}}&\leq \delta_k\|x^{k+1}-x^k\|_{{\cal M}},\quad 0\leq \delta_k < 1,\quad \sum_{k=0}^{\infty}\delta_k <\infty.\tag{B}\label{eq:stopB_pre_ppa}
\end{align}

The following theorem states the convergence results of the preconditioned PPA, which can be found in \cite{li2019asymptotically}. Note that the desired asymptotic superlinear convergence rate depends on the assumption that ${\cal T}_f= \partial f$ satisfies the error bound condition \eqref{error_bound}. We will prove later that for the case of $p(\cdot)=\Delta^{\mathcal{G},w}(\cdot)$, this assumption holds for commonly used loss functions.
\begin{theorem}\label{thm:convergence_ALM}
	(1) Let $\{x^k\}$ be the sequence generated by the preconditioned PPA \eqref{eq:pre_ppa} with the stopping criterion \eqref{eq:stopA_pre_ppa}. Then $\{x^k\}$ is bounded and $\{x^k\}$ converges to some $x^*\in \Omega_{\lambda}$.\\
	\noindent (2) Let $r:=\sum_{i=0}^{\infty}\epsilon_k+{\rm dist}_{{\cal M}}(x^0,\Omega_{\lambda})$. Assume that for this $r>0$, there exists a constant $\kappa>0$ such that ${\cal T}_f $ satisfies the following error bound condition
	\begin{align}
	{\rm dist}(x,\Omega_{\lambda})\leq \kappa {\rm dist}(0,{\cal T}_f(x)),\quad \mbox{$\forall x\in \mathbb{R}^n$ satisfying }{\rm dist}(x,\Omega_{\lambda})\leq r. \label{error_bound}
	\end{align}
	Suppose that $\{x^k\}$ is generated by the preconditioned PPA with the stopping criteria \eqref{eq:stopA_pre_ppa} and \eqref{eq:stopB_pre_ppa}. Then it holds for all $k\geq 0$ that
	\begin{align*}
	{\rm dist}_{{\cal M}}(x^{k+1},\Omega_{\lambda})\leq \mu_k {\rm dist}_{{\cal M}}(x^{k},\Omega_{\lambda}),
	\end{align*}
	where
	\[
	\mu_k=\frac{1}{1-\delta_k}\Big[\delta_k+ \frac{(1+\delta_k)\kappa\lambda_{\max}({\cal M}
		)}{\sqrt{\sigma_k^2+\kappa^2\lambda_{\max}^2(
			{\cal M})}}\Big]\rightarrow \mu_{\infty}=\frac{\kappa\lambda_{\max}({\cal M}
		)}{\sqrt{
			\sigma_{\infty}^2+\kappa^2\lambda_{\max}({\cal M}
			)^2}}<1,\quad k\rightarrow \infty,
	\]
	where $\lambda_{\max}({\cal M})=1+\tau\lambda_{\max}(A^T A)$.
\end{theorem}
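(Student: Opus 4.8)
The plan is to recognize \eqref{eq:pre_ppa} as an inexact proximal point iteration for the maximal monotone operator $\widetilde{\cal T}:={\cal M}^{-1}{\cal T}_f$ taken with respect to the inner product $\langle\cdot,\cdot\rangle_{\cal M}$, and then to run the classical Rockafellar analysis (with Luque's rate refinement) in this metric. First I would check the structural facts: $\widetilde{\cal T}$ is maximal monotone in the $\langle\cdot,\cdot\rangle_{\cal M}$ geometry (monotonicity follows from $\langle {\cal M}^{-1}u-{\cal M}^{-1}v,x-y\rangle_{\cal M}=\langle u-v,x-y\rangle\ge 0$ for $u\in\partial f(x),v\in\partial f(y)$), its zero set coincides with $\Omega_{\lambda}$, and the exact proximal map ${\cal P}_k=(I+\sigma_k\widetilde{\cal T})^{-1}$ is the resolvent of $\widetilde{\cal T}$ in this metric, hence firmly nonexpansive and a fortiori nonexpansive in $\|\cdot\|_{\cal M}$. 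The key identity linking the iteration to the subdifferential is that $\bar x:={\cal P}_k(x^k)$ satisfies $\tfrac{1}{\sigma_k}(x^k-\bar x)\in{\cal M}^{-1}\partial f(\bar x)$, equivalently $\tfrac{1}{\sigma_k}{\cal M}(x^k-\bar x)\in\partial f(\bar x)$.

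For part (1): for any $\bar x^*\in\Omega_{\lambda}$, nonexpansiveness of ${\cal P}_k$, the triangle inequality and \eqref{eq:stopA_pre_ppa} give $\|x^{k+1}-\bar x^*\|_{\cal M}\le\|x^k-\bar x^*\|_{\cal M}+\epsilon_k$, so $\{x^k\}$ is quasi-Fej\'er monotone with respect to $\Omega_{\lambda}$; since $\sum_k\epsilon_k<\infty$, the sequence $\{x^k\}$ is bounded and $\{\|x^k-\bar x^*\|_{\cal M}\}$ converges for every $\bar x^*\in\Omega_{\lambda}$. Telescoping the firm-nonexpansiveness estimate for $\|x^{k+1}-\bar x^*\|_{\cal M}^2$ forces $\|x^k-{\cal P}_k(x^k)\|_{\cal M}\to 0$; because $\tfrac{1}{\sigma_k}(x^k-{\cal P}_k(x^k))\in{\cal M}^{-1}\partial f({\cal P}_k(x^k))$ and the graph of $\partial f$ is closed, any cluster point $x^*$ of $\{x^k\}$ obeys $0\in\partial f(x^*)$, i.e. $x^*\in\Omega_{\lambda}$. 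Convergence of $\{\|x^k-x^*\|_{\cal M}\}$ together with a subsequence tending to $x^*$ then pins the whole sequence to $x^*$.

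For part (2) the core is a one-step contraction for the exact point $\bar x={\cal P}_k(x^k)$. Quasi-Fej\'er monotonicity and the definition of $r$ guarantee ${\rm dist}_{\cal M}(x^k,\Omega_{\lambda})\le r$ for all $k$, so \eqref{error_bound} is available along the iterates. Let $\bar x^*$ be the $\|\cdot\|_{\cal M}$-projection of $x^k$ onto $\Omega_{\lambda}$. Applying monotonicity of ${\cal M}^{-1}\partial f$ in $\langle\cdot,\cdot\rangle_{\cal M}$ to the pairs $(\tfrac{1}{\sigma_k}(x^k-\bar x),\bar x)$ and $(0,\bar x^*)$ yields $\langle x^k-\bar x,\bar x-\bar x^*\rangle_{\cal M}\ge 0$, hence the Pythagoras-type bound ${\rm dist}_{\cal M}(x^k,\Omega_{\lambda})^2\ge\|x^k-\bar x\|_{\cal M}^2+{\rm dist}_{\cal M}(\bar x,\Omega_{\lambda})^2$. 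Next I convert \eqref{error_bound} into the ${\cal M}$-metric via ${\rm dist}_{\cal M}(\cdot,\Omega_{\lambda})\le\sqrt{\lambda_{\max}({\cal M})}\,{\rm dist}(\cdot,\Omega_{\lambda})$ and ${\rm dist}(0,\partial f(\bar x))\le\sqrt{\lambda_{\max}({\cal M})}\,{\rm dist}_{\cal M}(0,{\cal M}^{-1}\partial f(\bar x))$, obtaining ${\rm dist}_{\cal M}(\bar x,\Omega_{\lambda})\le\kappa\lambda_{\max}({\cal M})\,{\rm dist}_{\cal M}(0,{\cal M}^{-1}\partial f(\bar x))\le\tfrac{\kappa\lambda_{\max}({\cal M})}{\sigma_k}\|x^k-\bar x\|_{\cal M}$, the last inequality using the resolvent identity. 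Eliminating $\|x^k-\bar x\|_{\cal M}$ between this and the Pythagoras bound gives ${\rm dist}_{\cal M}(\bar x,\Omega_{\lambda})\le\frac{\kappa\lambda_{\max}({\cal M})}{\sqrt{\sigma_k^2+\kappa^2\lambda_{\max}({\cal M})^2}}\,{\rm dist}_{\cal M}(x^k,\Omega_{\lambda})$. Finally I pass to the inexact iterate using \eqref{eq:stopB_pre_ppa}: ${\rm dist}_{\cal M}(x^{k+1},\Omega_{\lambda})\le\|x^{k+1}-\bar x\|_{\cal M}+{\rm dist}_{\cal M}(\bar x,\Omega_{\lambda})$ and $\|x^{k+1}-\bar x\|_{\cal M}\le\delta_k\|x^{k+1}-x^k\|_{\cal M}\le\delta_k(\|x^{k+1}-\bar x\|_{\cal M}+\|x^k-\bar x\|_{\cal M})$, which produces the factor $\tfrac{1}{1-\delta_k}$ and the stated $\mu_k$; $\mu_\infty<1$ follows from $\delta_k\to 0$, $\sigma_k\uparrow\sigma_\infty$, and $\lambda_{\max}({\cal M})=1+\tau\lambda_{\max}(A^TA)$. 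I expect the main obstacle to be the metric bookkeeping in this last part — in particular tracking why $\lambda_{\max}({\cal M})$ rather than $\sqrt{\lambda_{\max}({\cal M})}$ appears in $\mu_k$ (one factor from the domain side and one from the range side of the error bound conversion) and verifying the inexact-to-exact passage is tight; the remaining pieces are the standard Rockafellar–Luque template.
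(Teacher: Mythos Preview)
The paper does not supply its own proof of this theorem; it simply states that the result ``can be found in \cite{li2019asymptotically}'' and moves on. Your plan is exactly the argument that reference carries out: view \eqref{eq:pre_ppa} as the exact/inexact resolvent iteration of the maximal monotone operator ${\cal M}^{-1}{\cal T}_f$ in the ${\cal M}$-inner product, then run the Rockafellar convergence proof for part~(1) and the Luque rate refinement for part~(2). The metric bookkeeping you flag is correct---one factor of $\sqrt{\lambda_{\max}({\cal M})}$ enters when passing from ${\rm dist}(\cdot,\Omega_\lambda)$ to ${\rm dist}_{\cal M}(\cdot,\Omega_\lambda)$ and another when passing from ${\rm dist}(0,\partial f(\bar x))$ to ${\rm dist}_{\cal M}(0,{\cal M}^{-1}\partial f(\bar x))$, giving the $\kappa\lambda_{\max}({\cal M})$ in $\mu_k$; the verification that the error bound is applicable at $\bar x={\cal P}_k(x^k)$ uses ${\cal M}\succeq I_n$ so that ${\rm dist}(\bar x,\Omega_\lambda)\le{\rm dist}_{\cal M}(\bar x,\Omega_\lambda)\le{\rm dist}_{\cal M}(x^k,\Omega_\lambda)\le r$.

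One minor remark: the inexact-to-exact passage you sketch, namely $\|x^{k+1}-\bar x\|_{\cal M}\le\frac{\delta_k}{1-\delta_k}\|x^k-\bar x\|_{\cal M}\le\frac{\delta_k}{1-\delta_k}{\rm dist}_{\cal M}(x^k,\Omega_\lambda)$ combined with ${\rm dist}_{\cal M}(\bar x,\Omega_\lambda)\le\theta_k\,{\rm dist}_{\cal M}(x^k,\Omega_\lambda)$, actually delivers the slightly sharper factor $\frac{\delta_k}{1-\delta_k}+\theta_k$ rather than the stated $\frac{1}{1-\delta_k}\big[\delta_k+(1+\delta_k)\theta_k\big]$. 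Since the former is dominated by the latter for $\delta_k\in[0,1)$, your argument still proves the inequality exactly as written in the theorem; the discrepancy just reflects a looser bookkeeping choice in the cited reference.
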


\subsection{Error bound conditions for the weighted exclusive lasso model}
As one can see in Theorem \ref{thm:convergence_ALM}, the convergence rate of the preconditioned PPA relies on the error bound condition \eqref{error_bound} of ${\cal T}_f$. In this section, we first establish the error bound condition of ${\cal T}_f$ for the problem \eqref{eq: Convex-composite-program} with a piecewise linear-quadratic regularizer, based on the proximal residual function $R_{\lambda}(x)$ defined in \eqref{eq:residual}. Since $\Delta^{\mathcal{G},w}(x)=\sum_{j=1}^l \|w_{g_j} \circ x_{g_j}\|_1^2$ is piecewise linear-quadratic, the general result holds also for the exclusive lasso regularizer.

\begin{proposition}\label{prop:luotseng}
For the problem \eqref{eq: Convex-composite-program}, suppose that $h(\cdot)$ is strongly convex on any compact convex set in $\mathbb{R}^m$, $p(\cdot)$ is piecewise linear-quadratic. Then  for any $\xi\geq \inf_{x\in \mathbb{R}^n}f(x)$, there exist constants $\kappa,\varepsilon>0$ such that
\begin{align*}
{\rm dist}(x,\Omega_{\lambda})\leq \kappa \|R_{\lambda}(x)\|\mbox{ \quad  for all $x\in \mathbb{R}^n$ with $f(x)\leq \xi$, $\|R_{\lambda}(x)\|\leq \varepsilon$}.
\end{align*}
\end{proposition}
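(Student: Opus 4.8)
The plan is to reduce the claim to a known error bound result for composite optimization problems whose objective is the sum of a smooth function satisfying a restricted-strong-convexity-type property (coming from the strong convexity of $h$ on compact sets together with the linear map $A$) and a piecewise linear-quadratic convex function. The key structural observation is that under the stated hypotheses, the objective $f(x) = h(Ax) - \langle c,x\rangle + \lambda p(x)$ is itself piecewise linear-quadratic on every compact set, and more importantly, all optimal solutions in $\Omega_\lambda$ share the same value of $A\bar x$ and the same value of $p$ along the ``smooth directions''. First I would record this invariance: if $\bar x, \bar x' \in \Omega_\lambda$, then by convexity of $h$ and strong convexity of $h$ on the compact convex hull of $\{A\bar x, A\bar x'\}$, one forces $A\bar x = A\bar x'$; consequently $\nabla\Phi(\bar x) = A^T\nabla h(A\bar x) - c$ is constant over $\Omega_\lambda$, call it $-d$, and $\Omega_\lambda$ is exactly the (polyhedral, since $p$ is piecewise linear-quadratic) set $\{x : Ax = \bar b,\ d \in \lambda\partial p(x)\}$ intersected appropriately.

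Next I would invoke the Luo–Tseng error bound theory (this is the ``\cite{...}'' style result that the proposition's label \texttt{prop:luotseng} alludes to): for problems of the form $\min_x g(Ex) + \langle c,x\rangle + P(x)$ with $g$ strongly convex with Lipschitz gradient on the relevant sublevel set and $P$ polyhedral or piecewise linear-quadratic, the proximal residual $R_\lambda$ provides a local error bound, i.e. there exist $\kappa,\varepsilon>0$ with $\mathrm{dist}(x,\Omega_\lambda)\le \kappa\|R_\lambda(x)\|$ for $x$ in a sublevel set with $\|R_\lambda(x)\|\le\varepsilon$. To apply it here I would: (i) fix $\xi$ and restrict to the sublevel set $\{f(x)\le\xi\}$, which is bounded because $\Omega_\lambda$ is compact and $f$ is coercive on it (or at least has bounded sublevel sets — this follows from the compactness assumption on $\Omega_\lambda$ together with convexity); (ii) on the bounded enlargement of this set that also contains $\mathrm{Prox}_{\lambda p}(x-\nabla\Phi(x))$ for all such $x$ (using the 1-Lipschitz property of the prox), note $h$ is strongly convex there, so $\Phi$ satisfies the required second-order growth along $\mathrm{range}(A^T)$; (iii) verify $p = \Delta^{\mathcal G,w}$ is piecewise linear-quadratic — indeed $\|w_{g_j}\circ x_{g_j}\|_1^2$ is a square of a polyhedral convex function, hence piecewise linear-quadratic, and a finite sum of such is piecewise linear-quadratic. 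Then the cited error bound theorem applies verbatim to give the constants $\kappa,\varepsilon$.

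The main obstacle — and the step that needs genuine care rather than a citation — is establishing the precise hypothesis that the Luo–Tseng framework requires on the smooth part. The function $\Phi(x) = h(Ax)-\langle c,x\rangle$ is \emph{not} strongly convex in $x$ (since $A$ may be rank-deficient), only ``strongly convex in the direction $Ax$''. The standard fix is to show $\Phi$ satisfies a \emph{restricted} strong convexity / quadratic growth on the relevant set modulo $\ker A$, and then combine it with the polyhedrality of $p$ to kill the $\ker A$ directions; this is exactly the content of Luo–Tseng's argument and its refinements (e.g. via the work of Zhou–So on error bounds for structured convex problems), so I would cite that and spell out only the verification that our $h$, $A$, $c$, $p$ fit their template. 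A secondary point requiring a line of justification is uniformity of the strong-convexity modulus of $h$: we only assume $h$ is strongly convex on each compact convex set, so I must first pin down a single compact convex set — namely a closed ball in $\mathbb{R}^m$ containing $A(\{f\le\xi\})$ together with $A$ applied to all relevant prox-points — on which a uniform modulus $\mu>0$ exists, and then all estimates use that fixed $\mu$. With these two points addressed, the conclusion follows directly from the cited error bound theorem.
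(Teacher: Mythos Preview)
Your proposal is correct and ultimately lands on the same citation as the paper (Zhou--So's unified error bound framework), but the paper's route is considerably more direct. Rather than first establishing invariance of $A\bar x$ over $\Omega_\lambda$, then arguing restricted strong convexity modulo $\ker A$, and then pinning down a uniform strong-convexity modulus on a suitable compact set, the paper bypasses all of this by verifying a single structural hypothesis: since $p$ is piecewise linear-quadratic, so is $p^*$ (Rockafellar--Wets, Theorem~11.14(b)), hence $\partial p$ and $\partial p^*$ are polyhedral multifunctions (Rockafellar--Wets, Proposition~10.21). Consequently the solution map $\Gamma(y,g):=\{x:\,Ax=y,\ -g\in\partial p(x)\}$ is polyhedral, and Robinson's theorem gives that $\Gamma$ is locally upper Lipschitz continuous everywhere. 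This is exactly the hypothesis needed to invoke \cite[Corollary~1]{zhou2017unified}, which delivers the error bound in one stroke---the strong convexity of $h$ on compact sets, the restricted growth along ${\rm range}(A^T)$, and the handling of $\ker A$ directions are all absorbed into that corollary's proof rather than redone here.

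What your approach buys is a more self-contained argument that makes the role of each hypothesis visible; what the paper's approach buys is brevity and a clean reduction to polyhedrality. Neither is wrong, but be aware that your preparatory steps (invariance of $A\bar x$, uniform modulus) are already packaged inside the Zhou--So result you intend to cite, so spelling them out is redundant once you commit to that citation.
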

\begin{proof}
	Since $p$ is piecewise linear-quadratic, $p^*$ is also piecewise linear-quadratic by \cite[Theorem 11.14(b)]{rockafellar2009variational}. Thus $\partial p$ and $\partial p^*$ are both polyhedral due to \cite[Proposition 10.21]{rockafellar2009variational}. Define the solution map $\Gamma:\mathbb{R}^m\times \mathbb{R}^n\rightarrow \mathbb{R}^n$ as $\Gamma(y,g):=\{x\in \mathbb{R}^n\mid Ax=y,-g\in \partial p(x)\}$. Note that $\Gamma$ is a polyhedral multifunction, thus it is locally upper Lipschitz continuous at any $(y,g)\in \mathbb{R}^m\times \mathbb{R}^n$ by \cite{robinson1981some}. Therefore the desired conclusion holds by \cite[Corollary 1]{zhou2017unified}.
\end{proof}

In the next proposition, we prove that the error bound condition \eqref{error_bound} holds for the linear regression problem and the logistic regression problem with a piecewise linear-quadratic regularizer.
\begin{proposition} \label{prop:error-bound}
	Assume that $p(\cdot)$ is piecewise linear-quadratic. Then the error bound condition \eqref{error_bound} holds if $h(\cdot)$ is strongly convex on any compact convex set in $\mathbb{R}^m$. In particular, the latter property is satisfied by the following two special cases:
	\begin{itemize}
		\item[(1)] (linear regression) $h(y)=\sum_{i=1}^m (y_i-b_i)^2/2$, for some given vector $b\in \mathbb{R}^m$;
		\item[(2)] (logistic regression) $h(y) = \sum_{i=1}^m \log(1 + \exp(-b_i y_i))$, for some given vector $b\in \{-1,1\}^m$.
	\end{itemize}
\end{proposition}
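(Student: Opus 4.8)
The plan is to deduce the error bound condition \eqref{error_bound} from the residual-type estimate already furnished by Proposition \ref{prop:luotseng}, since the hypotheses here ($p$ piecewise linear-quadratic and $h$ strongly convex on every compact convex subset of $\mathbb{R}^m$) are exactly those of Proposition \ref{prop:luotseng}. That result gives, for any fixed $\xi\geq\inf_{x\in\mathbb{R}^n}f(x)$, constants $\kappa_0,\varepsilon_0>0$ with ${\rm dist}(x,\Omega_{\lambda})\leq\kappa_0\|R_{\lambda}(x)\|$ for all $x$ satisfying $f(x)\leq\xi$ and $\|R_{\lambda}(x)\|\leq\varepsilon_0$. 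What remains is to (i) bound $\|R_{\lambda}(x)\|$ by ${\rm dist}(0,{\cal T}_f(x))$, and (ii) replace the two conditions $f(x)\leq\xi$, $\|R_{\lambda}(x)\|\leq\varepsilon_0$ by the single region condition ${\rm dist}(x,\Omega_{\lambda})\leq r$, at the cost of enlarging $\kappa$.

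For step (i): since $\Phi(x)=h(Ax)-\langle c,x\rangle$ is continuously differentiable, the sum rule gives ${\cal T}_f(x)=\partial f(x)=\nabla\Phi(x)+\lambda\partial p(x)$, so any $g\in{\cal T}_f(x)$ satisfies $g-\nabla\Phi(x)\in\lambda\partial p(x)$, which is equivalent to $x={\rm Prox}_{\lambda p}(x+g-\nabla\Phi(x))$. Subtracting this identity from the definition $R_{\lambda}(x)=x-{\rm Prox}_{\lambda p}(x-\nabla\Phi(x))$ and using that ${\rm Prox}_{\lambda p}(\cdot)$ is $1$-Lipschitz yields $\|R_{\lambda}(x)\|\leq\|g\|$; taking the infimum over $g\in{\cal T}_f(x)$ gives $\|R_{\lambda}(x)\|\leq{\rm dist}(0,{\cal T}_f(x))$ for every $x\in\mathbb{R}^n$.

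For step (ii): the set $C_r:=\{x\in\mathbb{R}^n:{\rm dist}(x,\Omega_{\lambda})\leq r\}$ is compact because $\Omega_{\lambda}$ is compact, $R_{\lambda}(\cdot)$ is continuous on $\mathbb{R}^n$, and $f$ is continuous on $C_r$ for the exclusive lasso regularizer (in general $f$ is lower semicontinuous and attains a finite maximum on the compact set $C_r\cap{\rm dom}\,f$, while ${\cal T}_f(x)=\emptyset$ off ${\rm dom}\,f$ makes \eqref{error_bound} vacuous there); set $\xi:=\sup_{x\in C_r}f(x)$, which is admissible in Proposition \ref{prop:luotseng} since $\Omega_{\lambda}\subseteq C_r$. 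Now take any $x\in C_r$: if $\|R_{\lambda}(x)\|\leq\varepsilon_0$, then $f(x)\leq\xi$ as well, so Proposition \ref{prop:luotseng} and step (i) give ${\rm dist}(x,\Omega_{\lambda})\leq\kappa_0\|R_{\lambda}(x)\|\leq\kappa_0\,{\rm dist}(0,{\cal T}_f(x))$; if instead $\|R_{\lambda}(x)\|>\varepsilon_0$, then ${\rm dist}(0,{\cal T}_f(x))\geq\|R_{\lambda}(x)\|>\varepsilon_0$, so ${\rm dist}(x,\Omega_{\lambda})\leq r\leq(r/\varepsilon_0)\,{\rm dist}(0,{\cal T}_f(x))$. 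Hence \eqref{error_bound} holds with $\kappa:=\max\{\kappa_0,\,r/\varepsilon_0\}$.

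Finally, for the two named instances I would just verify the strong-convexity-on-compacts property. For linear regression $h(y)=\tfrac12\|y-b\|^2$ has Hessian $I_m$, hence is $1$-strongly convex on all of $\mathbb{R}^m$. For logistic regression with $b\in\{-1,1\}^m$, $h$ is $C^{\infty}$ with diagonal Hessian $\nabla^2 h(y)={\rm Diag}\big(\exp(-b_iy_i)/(1+\exp(-b_iy_i))^2\big)$ (using $b_i^2=1$), whose diagonal entries are bounded below by a positive constant on any bounded set, so $h$ is strongly convex on every compact convex subset of $\mathbb{R}^m$. I expect the only genuinely delicate point to be step (ii): upgrading the localized hypotheses $f(x)\leq\xi$, $\|R_{\lambda}(x)\|\leq\varepsilon_0$ of Proposition \ref{prop:luotseng} to the region ${\rm dist}(x,\Omega_{\lambda})\leq r$ demanded by \eqref{error_bound}, which is handled precisely by the compactness of $\Omega_{\lambda}$ and the dichotomy on the size of $\|R_{\lambda}(x)\|$; steps (i) and the two Hessian computations are routine.
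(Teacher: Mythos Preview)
Your proof is correct and follows essentially the same approach as the paper: define the compact neighborhood $C_r=\{x:{\rm dist}(x,\Omega_{\lambda})\leq r\}$, take $\xi=\sup_{C_r}f$ to invoke Proposition~\ref{prop:luotseng}, split into the cases $\|R_{\lambda}(x)\|\leq\varepsilon_0$ and $\|R_{\lambda}(x)\|>\varepsilon_0$, and use the $1$-Lipschitz property of ${\rm Prox}_{\lambda p}$ together with $x={\rm Prox}_{\lambda p}(x+g-\nabla\Phi(x))$ for $g\in{\cal T}_f(x)$ to bound $\|R_{\lambda}(x)\|$ by ${\rm dist}(0,{\cal T}_f(x))$. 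The only differences are cosmetic (you apply the prox-to-subdifferential bound inside each case rather than afterward, and you explicitly verify the Hessian positivity for the two named loss functions, which the paper leaves implicit).
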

\begin{proof}
	Given $r>0$, define
	\begin{align*}
	\widehat{\Omega}_r:=\{x\in \mathbb{R}^n\mid {\rm dist}(x,\Omega_{\lambda})\leq r\}.
	\end{align*}
	Due to the compactness of $\Omega_{\lambda}$, $\widehat{\Omega}_r$ is also compact and thus $\xi:=\max_{x\in \widehat{\Omega}_r}f(x)$ is finite. Note that the assumptions in Proposition \ref{prop:luotseng} are satisfied. For this $\xi$, there exist constants $\kappa,\varepsilon>0$ such that
	\begin{align}
	{\rm dist}(x,\Omega_{\lambda})\leq \kappa \|R_{\lambda}(x)\|\mbox{ \quad  for all $x\in\mathbb{R}^n$ with $f(x)\leq \xi$, $\|R_{\lambda}(x)\|\leq \varepsilon$}.\label{eq:luocondition}
	\end{align}
	For any $x\in \widehat{\Omega}_r$, we consider two cases: if $\|R_{\lambda}(x)\|\leq \varepsilon$, from \eqref{eq:luocondition}, we have ${\rm dist}(x,\Omega_{\lambda})\leq \kappa \|R_{\lambda}(x)\|$; if $\|R_{\lambda}(x)\|> \varepsilon$, ${\rm dist}(x,\Omega_{\lambda})\leq r= (r/\varepsilon)\varepsilon < (r/\varepsilon) \|R_{\lambda}(x)\|$. Therefore, it holds that
	\[
	{\rm dist}(x,\Omega_{\lambda})\leq \max\{\kappa,(r/\varepsilon)\} \|R_{\lambda}(x)\|,\quad \forall x\in \widehat{\Omega}_r.
	\]
	Next, consider an arbitrary $x\in \widehat{\Omega}_r.$ For any $y\in {\cal T}_f(x)$, we have that $x={\rm Prox}_{\lambda p}(x+y-A^T \nabla h(Ax)+c)$, and
	\begin{align*}
	\|R_{\lambda}(x)\|=\|{\rm Prox}_{\lambda p}(x+y-A^T \nabla h(Ax)+c)-{\rm Prox}_{\lambda p}(x-A^T \nabla h(A x)+c)\|\leq \|y\|.
	\end{align*}
	Therefore, ${\rm dist}(x,\Omega_{\lambda})\leq \max\{\kappa,(r/\varepsilon)\}
	\norm{y}$ for any $y\in {\cal T}_f(x)$. This implies that
	\begin{align*}
	{\rm dist}(x,\Omega_{\lambda})\leq \max\{\kappa,(r/\varepsilon)\}
	{\rm dist}(0,{\cal T}_f(x)).
	\end{align*}
Since $x\in \widehat{\Omega}_r$ is arbitrary, the above inequality implies that the error bound condition \eqref{error_bound} holds.
\end{proof}

From Proposition \ref{prop:error-bound}, the preconditioned PPA for solving the linear regression and logistic regression problems with the exclusive lasso regularizer is guaranteed to have fast linear convergence when the parameters $\{\sigma_k\}$ are large. Note that the key challenge in executing the preconditioned PPA is whether the nonsmooth problem \eqref{eq:pre_ppa} can be solved efficiently. Next we design a dual Newton method to solve it, which is expected to be superlinearly (or even quadratically) convergent.

\subsection{A dual Newton method for the preconditioned PPA subproblem \eqref{eq:pre_ppa}}
\label{sec:dna}
Note that in the preconditioned PPA subproblem \eqref{eq:pre_ppa}, $f_k(\cdot)$ is strongly convex and nonsmooth. Thus it admits a unique minimizer $\bar{x}^{k+1}$. The main point is how one can solve the minimization problem in a fast and robust way. Our choice is the dual Newton method, that is applying the semismooth Newton method for solving the dual problem of \eqref{eq:pre_ppa}.

Since ${\cal M}=I_n+\tau A^T A$, the dual of \eqref{eq:pre_ppa} can be shown to be expressible as the following smooth unconstrained problem:
\begin{align}
\max_{u\in \mathbb{R}^n}\ \Big\{\psi_k(u)&:= -\frac{\tau}{2\sigma_k}\|Ax^k+\frac{\sigma_k}{\tau}u\|^2 +\frac{\tau}{\sigma_k}{\rm E}_{\sigma_k h/\tau}(Ax^k+\frac{\sigma_k}{\tau}u) +\frac{\tau}{2\sigma_k}\|Ax^k\|^2\notag\\
&- \frac{1}{2\sigma_k}\|x^k+\sigma_k c-\sigma_k A^T u\|^2 +\frac{1}{\sigma_k}{\rm E}_{\sigma_k\lambda  p}(x^k+\sigma_k c-\sigma_kA^Tu)+\frac{1}{2\sigma_k}\|x^k\|^2\Big\}.\label{eq:ATA_D}
\end{align}
Since $h(\cdot)$ is convex and twice continuously differentiable, we can establish the following proposition.
\begin{proposition}\label{prop: jacobian_h}
	For any $\nu>0$, ${\rm Prox}_{\nu h}(z)$ is differentiable with
	\begin{align*}
	\nabla {\rm Prox}_{\nu h}(z)=(I_m+\nu \nabla^2 h({\rm Prox}_{\nu h}(z)))^{-1},\quad \forall z\in \mathbb{R}^m.
	\end{align*}
	Therefore, $0\prec \nabla {\rm Prox}_{\nu h}(z) \preceq I_m$ for any $z\in \mathbb{R}^m$, and thus the function
	\begin{align*}
	\theta(z):=\frac{1}{2}\|z\|^2-{\rm E}_{\nu h}(z)
	\end{align*}
	is strictly convex on $\mathbb{R}^m$.
\end{proposition}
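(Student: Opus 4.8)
The plan is to reduce everything to the inverse function theorem applied to the stationarity map of the proximal subproblem. First I would recall that $y={\rm Prox}_{\nu h}(z)$ is the unique minimizer of $h(\cdot)+\frac{1}{2\nu}\|\cdot-z\|^2$, hence is equivalently characterized by the equation $y+\nu\nabla h(y)=z$. Define $F:\mathbb{R}^m\to\mathbb{R}^m$ by $F(y):=y+\nu\nabla h(y)$. Since $h$ is convex and twice continuously differentiable, $F$ is continuously differentiable with $\nabla F(y)=I_m+\nu\nabla^2 h(y)$, and from $\nabla^2 h(y)\succeq 0$ we get $\nabla F(y)\succeq I_m\succ 0$ for every $y$; in particular $\nabla F(y)$ is nonsingular everywhere. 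The existence and uniqueness of the proximal point already tell us that $F$ is a bijection of $\mathbb{R}^m$ with ${\rm Prox}_{\nu h}=F^{-1}$, so the inverse function theorem yields that ${\rm Prox}_{\nu h}$ is continuously differentiable with
\[
\nabla{\rm Prox}_{\nu h}(z)=\big(\nabla F(F^{-1}(z))\big)^{-1}=\big(I_m+\nu\nabla^2 h({\rm Prox}_{\nu h}(z))\big)^{-1},
\]
which is the claimed formula.

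Next I would derive the spectral bounds: because $\nabla^2 h({\rm Prox}_{\nu h}(z))\succeq 0$, the symmetric matrix $I_m+\nu\nabla^2 h({\rm Prox}_{\nu h}(z))$ satisfies $I_m\preceq I_m+\nu\nabla^2 h({\rm Prox}_{\nu h}(z))$, and inverting this operator inequality gives $0\prec\nabla{\rm Prox}_{\nu h}(z)\preceq I_m$. For the strict convexity of $\theta$, I would use the identity $\nabla{\rm E}_{\nu h}(z)=z-{\rm Prox}_{\nu h}(z)$ recorded in the preliminaries, so that $\nabla\theta(z)=z-\nabla{\rm E}_{\nu h}(z)={\rm Prox}_{\nu h}(z)$; since this is continuously differentiable by the first part, $\theta$ is twice continuously differentiable with $\nabla^2\theta(z)=\nabla{\rm Prox}_{\nu h}(z)\succ 0$ for all $z$, and a function with everywhere positive definite Hessian on $\mathbb{R}^m$ is strictly convex. (As an alternative one may argue directly that $\nabla\theta={\rm Prox}_{\nu h}$ is strictly monotone: firm nonexpansiveness gives $\langle{\rm Prox}_{\nu h}(z_1)-{\rm Prox}_{\nu h}(z_2),\,z_1-z_2\rangle\geq\|{\rm Prox}_{\nu h}(z_1)-{\rm Prox}_{\nu h}(z_2)\|^2$, and the right-hand side is strictly positive whenever $z_1\neq z_2$ because $F^{-1}$ is injective.)

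There is no serious obstacle in this argument; the only point that deserves a line of care is the \emph{global} invertibility of $F$, which is what allows the inverse function theorem to be invoked on all of $\mathbb{R}^m$ and identifies ${\rm Prox}_{\nu h}$ with $F^{-1}$. This is immediate from the stationarity characterization together with the existence and uniqueness of the proximal point, so the bulk of the proof is just bookkeeping with the operator ordering.
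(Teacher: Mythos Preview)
Your proposal is correct and follows essentially the same route as the paper: both differentiate the stationarity identity $y+\nu\nabla h(y)=z$ and then read off the spectral bounds and the strict convexity of $\theta$ from $\nabla\theta={\rm Prox}_{\nu h}$. The only cosmetic difference is that the paper packages the argument via the implicit function theorem applied to $F(u,v)=v-u+\nu\nabla h(v)$, whereas you invoke the inverse function theorem on $F(y)=y+\nu\nabla h(y)$ after observing that ${\rm Prox}_{\nu h}=F^{-1}$ globally; these are equivalent here and lead to the same computation.
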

\begin{proof}
	Define $F:\mathbb{R}^{2m}\rightarrow \mathbb{R}^m$ as
	\begin{align*}
	F(u,v) = v-u+\nu \nabla h(v),\quad \forall (u,v)\in \mathbb{R}^m\times\mathbb{R}^m.
	\end{align*}
	The optimality condition of $\min_w\{\frac{1}{2}\|w-z\|^2+\nu h(w)\}$ implies that for any $z\in \mathbb{R}^m$, there exists a unique $w$ such that $F(z,w)=0$,
	where this unique minimizer $w$ is denoted as ${\rm Prox}_{\nu h}(z)$. Let the Jacobian of $F$ with respect to $u$ and $v$ be denoted as $J_{F,u}$ and $J_{F,v}$, respectively. We have that
	\begin{align*}
	J_{F,v}(z,w)=I_m +\nu \nabla^2 h(w)
	\end{align*}
	is invertible. According to the implicit function theorem, we know that there exists an open set $U\subset \mathbb{R}^m$ containing $z$ such that there exists a unique continuously differentiable function $g: U\rightarrow \mathbb{R}^m$ such that $g(z)=w$ and
	\begin{align*}
	&F(u,g(u))=g(u)-u+\nu \nabla h(g(u))=0,\quad \forall u\in U,\\
	&\nabla g(u)=-[J_{F,v}(u,g(u))]^{-1}J_{F,u}(u,g(u))=(I_m +\nu \nabla^2 h(g(u)))^{-1},\quad \forall u\in U.
	\end{align*}
	Combining the uniqueness of the function $g(\cdot)$ and the definition of ${\rm Prox}_{\nu h}(\cdot)$, we have that ${\rm Prox}_{\nu h}(u)=g(u)$ for all $u\in U$ and
	\begin{align*}
	\nabla {\rm Prox}_{\nu h}(z )= (I_m +\nu \nabla^2 h({\rm Prox}_{\nu h}(z)))^{-1}.
	\end{align*}
	The remaining part of the conclusion follows naturally since $\nabla \theta (z)= {\rm Prox}_{\nu h}(z )$.
\end{proof}

The above proposition shows that $\psi_k(\cdot)$ is strictly concave. Thus it admits a unique maximizer, which we denote as $\bar{u}^{k+1}$. As long as we obtain $\bar{u}^{k+1}$, the update of $x$ in the preconditioned PPA iteration \eqref{eq:pre_ppa} could be given as
\begin{align*}
\bar{x}^{k+1}={\rm Prox}_{\sigma_k \lambda p}(x^k + \sigma_k c - \sigma_k A^T\bar{u}^{k+1}).
\end{align*}
Since $\psi_k$ is continuously differentiable, $\bar{u}^{k+1}$ could be computed by solving
\begin{align}
\nabla \psi_k(u)=-{\rm Prox}_{\sigma_k h/\tau}(Ax^k+\frac{\sigma_k}{\tau}u)+A {\rm Prox}_{\sigma_k \lambda p}(x^k+\sigma_k c-\sigma_kA^Tu)=0.\label{eq: newton-system}
\end{align}
Note that $\nabla \psi_k(\cdot)$ is Lipschitz continuous, but nondifferentiable. We propose a semismooth Newton (SSN) method to solve the nonsmooth equation \eqref{eq: newton-system}, which is proved to have at least superlinear convergence. The concept of semismoothness could be found in \cite{kummer1988newton,mifflin1977semismooth,qi1993nonsmooth,sun2002semismooth}.

Define the multifunction $\hat{\partial}^2 \psi_k(\cdot):\mathbb{R}^m\rightrightarrows \mathbb{R}^{m\times m}$ as follows: for any $u\in \mathbb{R}^m$,
\begin{align}
\hat{\partial}^2 \psi_k(u)  :=  -\frac{\sigma_k}{\tau}\nabla {\rm Prox}_{\sigma_k h/\tau}(Ax^k+\frac{\sigma_k}{\tau}u)  - \sigma_kA\partial_{\rm HS}{\rm Prox}_{\sigma_k \lambda p}(x^k + \sigma_k c - \sigma_kA^Tu)A^T,
\label{eq: jacobian}
\end{align}
where $\partial_{\rm HS} {\rm Prox}_{\sigma_k \lambda  p}(\cdot)$ is defined in Proposition \ref{prop: proxmapping_and_jacobian}. The following proposition states that $\hat{\partial}^2 \psi_k(\cdot)$ could be treated as the generalized Jacobian of the Lipschitz continuous function $\nabla\psi_k(\cdot)$.
\begin{proposition}\label{prop: jacobian_psi}
	The multifunction $\hat{\partial}^2 \psi_k(\cdot)$ defined in \eqref{eq: jacobian} satisfies the following properties:
	\begin{itemize}
		\item[(1)] $\hat{\partial}^2 \psi_k(\cdot)$ is a nonempty, compact valued, upper-semicontinuous multifunction;
		\item[(2)] for any $u\in \mathbb{R}^m$, all the elements in $\hat{\partial}^2 \psi_k(u) $ are symmetric and negative definite;
		\item[(3)] $\nabla \psi_k(\cdot)$ is strongly semismooth with respect to $\hat{\partial}^2 \psi_k(\cdot)$.
	\end{itemize}
\end{proposition}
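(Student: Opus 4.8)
The plan is to split $\nabla\psi_k$ into a part coming from the loss $h$ and a part coming from the regularizer $p$, establish the three assertions for each piece separately, and then reassemble them via the standard calculus of set-valued maps and of semismoothness. Introduce the affine maps $\Lambda(u):=Ax^k+\tfrac{\sigma_k}{\tau}u$ and $\Gamma(u):=x^k+\sigma_k c-\sigma_kA^Tu$. Then \eqref{eq: newton-system} and \eqref{eq: jacobian} can be written as
\begin{align*}
\nabla\psi_k(u)=G_1(u)+G_2(u),\qquad \hat{\partial}^2\psi_k(u)=\mathcal{J}_1(u)+\mathcal{J}_2(u),
\end{align*}
where $G_1(u):=-{\rm Prox}_{\sigma_k h/\tau}(\Lambda(u))$, $G_2(u):=A\,{\rm Prox}_{\sigma_k\lambda p}(\Gamma(u))$, $\mathcal{J}_1(u):=\{-\tfrac{\sigma_k}{\tau}\nabla{\rm Prox}_{\sigma_k h/\tau}(\Lambda(u))\}$ is single-valued by Proposition \ref{prop: jacobian_h}, and $\mathcal{J}_2(u):=-\sigma_kA\,\partial_{\rm HS}{\rm Prox}_{\sigma_k\lambda p}(\Gamma(u))\,A^T$.

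For (1) and (2): By Proposition \ref{prop: jacobian_h}, $\nabla{\rm Prox}_{\sigma_k h/\tau}(z)=(I_m+\tfrac{\sigma_k}{\tau}\nabla^2 h({\rm Prox}_{\sigma_k h/\tau}(z)))^{-1}$ is the inverse of a symmetric positive definite matrix that depends continuously on $z$ (using $h\in C^2$ and the Lipschitz continuity of ${\rm Prox}_{\sigma_k h/\tau}$), so $\mathcal{J}_1(\cdot)$ is a continuous single-valued map whose value is symmetric and, since $0\prec\nabla{\rm Prox}_{\sigma_k h/\tau}\preceq I_m$, negative definite. By Proposition \ref{prop: proxmapping_and_jacobian}, $\partial_{\rm HS}{\rm Prox}_{\sigma_k\lambda p}(\cdot)$ is a nonempty-, compact-valued, upper-semicontinuous multifunction each of whose elements is symmetric positive semidefinite; these properties survive composition on the inside with the continuous map $\Gamma$ and the application of the fixed congruence $W\mapsto -\sigma_kAWA^T$, so $\mathcal{J}_2(\cdot)$ is nonempty-, compact-valued, upper-semicontinuous with every element symmetric negative semidefinite. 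Adding a continuous single-valued map to an upper-semicontinuous nonempty-compact-valued map preserves all three properties, which gives (1); and a symmetric negative definite matrix plus a symmetric negative semidefinite matrix is symmetric negative definite, which gives (2).

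For (3): Strong semismoothness is closed under sums and under composition with affine (more generally, continuously differentiable) maps, so it suffices to verify that $G_1$ is strongly semismooth with respect to $\mathcal{J}_1$ and that ${\rm Prox}_{\sigma_k\lambda p}$ is strongly semismooth with respect to $\partial_{\rm HS}{\rm Prox}_{\sigma_k\lambda p}$. The latter is part of Proposition \ref{prop: proxmapping_and_jacobian} (for $p=\Delta^{\mathcal{G},w}$, which is piecewise linear-quadratic, the proximal mapping is piecewise affine and therefore strongly semismooth); composing with $\Gamma$ and then left-multiplying by the fixed linear map $A$ transfers this to $G_2$ with respect to $\mathcal{J}_2$. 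For $G_1$: ${\rm Prox}_{\sigma_k h/\tau}$ is continuously differentiable with Jacobian $(I_m+\tfrac{\sigma_k}{\tau}\nabla^2 h({\rm Prox}_{\sigma_k h/\tau}(\cdot)))^{-1}$, and since ${\rm Prox}_{\sigma_k h/\tau}$ is $1$-Lipschitz this Jacobian is locally Lipschitz whenever $\nabla^2 h$ is; a $C^1$ map with locally Lipschitz Jacobian is strongly semismooth with respect to its Jacobian, so $G_1$ is strongly semismooth with respect to $\mathcal{J}_1$. Summing yields that $\nabla\psi_k$ is strongly semismooth with respect to $\hat{\partial}^2\psi_k$.

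The main obstacle is the loss term in (3): obtaining the quadratic-order remainder estimate needed for \emph{strong} (rather than ordinary) semismoothness of $G_1$ requires local Lipschitz continuity of $\nabla^2 h$, which is marginally stronger than the stated $C^2$ hypothesis but holds for the losses of interest --- the squared loss, where $\nabla^2 h\equiv I_m$, and the logistic loss, where $h$ is real-analytic (cf.\ Proposition \ref{prop:error-bound}). Everything else is routine bookkeeping: checking that the affine inner maps $\Lambda,\Gamma$ and the fixed congruence $W\mapsto AWA^T$ interact correctly with the calculus of upper-semicontinuity and of (strong) semismoothness, and verifying the symmetry/definiteness algebra used in (2).
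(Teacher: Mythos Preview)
Your proposal is correct and follows the same route as the paper, which simply cites Propositions~\ref{prop: jacobian_h} and~\ref{prop: proxmapping_and_jacobian} and omits the details you have supplied. Your observation that strong semismoothness of the $h$-part in (3) requires local Lipschitz continuity of $\nabla^2 h$---slightly more than the stated $C^2$ hypothesis, though satisfied by the squared and logistic losses---is accurate and is a subtlety the paper's omitted-details proof does not address.
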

\begin{proof}
	The conclusions follows from Proposition \ref{prop: jacobian_h} and Proposition \ref{prop: proxmapping_and_jacobian}. We omit the details here.
\end{proof}

Now we present the SSN method for solving \eqref{eq:ATA_D} in Algorithm \ref{alg:ssn}.
\begin{algorithm}[H]\small
	\caption{Semismooth Newton method for \eqref{eq:ATA_D}}
	\label{alg:ssn}
	\begin{algorithmic}[1]
		\STATE \textbf{Input}: $\mu \in (0, 1/2)$, $\bar{\tau} \in (0, 1]$, and $\bar{\gamma}, \delta \in (0, 1)$.
		\STATE \textbf{Output}: an approximate optimal solution $u^{k+1}$ to \eqref{eq:ATA_D}.
		\STATE \textbf{Initialization}: choose $u^{k,0} \in  \mathbb{R}^m$, $j=0$.
		\REPEAT
		\STATE {\bfseries Step 1}. Select an element ${\cal H}_j \in \hat{\partial}^{2} \psi_k(u^{k,j})$. Apply the direct method or the conjugate gradient (CG) method to find an approximate solution $d^j \in \mathbb{R}^m$ to
		\begin{align}\label{eq: cg-system}
		{\cal H}_j(d^j) \approx - \nabla \psi_k(u^{k,j}),
		\end{align}
		such that $\|{\cal H}_j(d^j) + \nabla\psi_k(u^{k,j})\| \leq \min(\bar{\gamma}, \|\nabla\psi(u^{k,j})\|^{1+\bar{\tau}})$.
		\\[3pt]
		\STATE {\bfseries Step 2}. Set $\alpha_j = \delta^{m_j}$, where $m_j$ is the smallest nonnegative integer $m$ for which
		$$\psi_k(u^{k,j} + \delta^m d^j) \geq \psi_k(u^{k,j}) + \mu\delta^m \langle \nabla\psi_k(u^{k,j}), d^j \rangle .$$
		\\[3pt]
		\STATE{\bfseries Step 3}. Set $u^{k,j+1} = u^{k,j} + \alpha_j d^j$, $u^{k+1}=u^{k,j+1}$, $j\leftarrow j+1$.
		\UNTIL{Stopping criterion based on $u^{k+1}$ is satisfied.}
	\end{algorithmic}
\end{algorithm}

The following theorem gives the convergence result of the SSN method, which could be proved by using Proposition \ref{prop: jacobian_psi} and the results in\cite[Proposition 3.3 and Theorem 3.4]{zhao2010newton}, \cite[Theorem 3]{li2018efficiently}.
\begin{theorem} \label{thm:convergence_SSN}
	Let $\{u^{k,j}\}$ be the sequence generated by Algorithm \ref{alg:ssn}. Then $\{u^{k,j}\}$ converges to the unique optimal solution $\bar{u}^{k+1}$ of the problem \eqref{eq:ATA_D}, and for $j$ sufficiently large,
	\begin{align*}
	\|u^{k,j+1} - \bar{u}^{k+1}\| = O(\|u^{k,j} - \bar{u}^{k+1}\|^{1+\bar{\tau}}),
	\end{align*}
	where $\bar{\tau} \in (0, 1]$ is given in the algorithm.
\end{theorem}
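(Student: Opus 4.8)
The plan is to invoke the general convergence theory for the semismooth Newton method with globalization, so the proof is essentially a verification that all the hypotheses of that theory are met in our setting. First I would record the structural facts established earlier: by Proposition~\ref{prop: jacobian_h}, $\psi_k(\cdot)$ is continuously differentiable with $\nabla\psi_k(\cdot)$ Lipschitz continuous, and it is strictly concave, hence possesses a unique maximizer $\bar u^{k+1}$, which is the unique solution of the nonsmooth equation $\nabla\psi_k(u)=0$ in \eqref{eq: newton-system}. Next I would appeal to Proposition~\ref{prop: jacobian_psi}: the multifunction $\hat\partial^2\psi_k(\cdot)$ is a nonempty, compact-valued, upper-semicontinuous surrogate generalized Jacobian, every element of which is symmetric negative definite, and $\nabla\psi_k$ is strongly semismooth with respect to it. The uniform negative definiteness is what makes the Newton systems \eqref{eq: cg-system} solvable and the search directions $d^j$ well defined and of descent type for $-\psi_k$ (equivalently, ascent for $\psi_k$), so the line search in Step~2 is well posed and terminates.

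With these ingredients in place, the argument splits into the standard two parts. For global convergence, I would apply the globalization analysis of the (inexact) SSN method — as in \cite[Proposition 3.3 and Theorem 3.4]{zhao2010newton} and \cite[Theorem 3]{li2018efficiently} — which uses the Armijo-type condition in Step~2 together with the fact that $\hat\partial^2\psi_k(u)$ is bounded with uniformly negative definite elements on level sets (a consequence of strict concavity and the compactness/boundedness in Proposition~\ref{prop: jacobian_psi}) to conclude that every accumulation point of $\{u^{k,j}\}$ is a stationary point of $\psi_k$; by strict concavity this point is the unique maximizer $\bar u^{k+1}$, and since $\{u^{k,j}\}$ stays in a compact level set it converges to $\bar u^{k+1}$. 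For the local rate, once $u^{k,j}$ is close to $\bar u^{k+1}$, the strong semismoothness of $\nabla\psi_k$ with respect to $\hat\partial^2\psi_k$, combined with the nonsingularity (indeed uniform negative definiteness) of the elements ${\cal H}_j\in\hat\partial^2\psi_k(u^{k,j})$ and the inexactness control $\|{\cal H}_j d^j+\nabla\psi_k(u^{k,j})\|\le\min(\bar\gamma,\|\nabla\psi_k(u^{k,j})\|^{1+\bar\tau})$, yields that the unit step is eventually accepted and the iteration converges with order $1+\bar\tau$; this is exactly the conclusion $\|u^{k,j+1}-\bar u^{k+1}\|=O(\|u^{k,j}-\bar u^{k+1}\|^{1+\bar\tau})$.

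The only point requiring genuine care — the "main obstacle" such as it is — is ensuring that the hypotheses of the cited convergence results are matched verbatim, in particular that the surrogate Jacobian $\hat\partial^2\psi_k(\cdot)$ defined via $\partial_{\rm HS}{\rm Prox}_{\sigma_k\lambda p}(\cdot)$ in \eqref{eq: jacobian} genuinely satisfies the semismoothness and nonsingularity requirements those theorems impose on the generalized Hessian; this is precisely what Proposition~\ref{prop: jacobian_psi} (resting on Proposition~\ref{prop: jacobian_h} and Proposition~\ref{prop: proxmapping_and_jacobian}) delivers, so the verification is immediate once those propositions are invoked. Because every needed property has already been isolated, the proof reduces to citing \cite[Proposition 3.3 and Theorem 3.4]{zhao2010newton} and \cite[Theorem 3]{li2018efficiently} and noting that their assumptions hold here; accordingly I would keep the proof short, essentially stating that the result follows by combining Proposition~\ref{prop: jacobian_psi} with these references, and omit the routine details.
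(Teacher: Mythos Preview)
Your proposal is correct and mirrors the paper's own treatment: the paper likewise observes that the theorem follows directly from Proposition~\ref{prop: jacobian_psi} together with \cite[Proposition~3.3 and Theorem~3.4]{zhao2010newton} and \cite[Theorem~3]{li2018efficiently}, and omits the routine details. Your outline of the two parts (global convergence via the Armijo-type line search and strict concavity, local $1+\bar\tau$ rate via strong semismoothness and uniform nonsingularity) is exactly the verification those references require.
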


We should emphasize that the efficiency of computing the Newton direction in \eqref{eq: cg-system} depends critically on exploiting the sparsity structure of the generalized Jacobian. The numerical implementation details of the SSN method for the case of $p(\cdot)=\Delta^{\mathcal{G},w}(\cdot)$ could be found in the appendix.

\subsection{The PPDNA framework for solving weighted exclusive lasso models}
We give the full description of the PPDNA in Algorithm \ref{alg:ppdna}.
\begin{algorithm}[H]\small
	\caption{Dual Newton method based proximal point algorithm for \eqref{eq: Convex-composite-program}}
	\label{alg:ppdna}
	\begin{algorithmic}[1]
		\STATE \textbf{Input}: $0<\sigma_0\leq \sigma_{\infty}\leq \infty$.
		\STATE \textbf{Output}: an approximate optimal solution $x$ to \eqref{eq: Convex-composite-program}.
		\STATE \textbf{Initialization}: choose $x^0\in \mathbb{R}^n$, $k=0$.
		\REPEAT
		\STATE {\bfseries Step 1}. Apply Algorithm \ref{alg:ssn} to maximize $\psi_{k}(u)$, where $\psi_k(\cdot)$ is defined as in \eqref{eq:ATA_D}. Based on the updated $u^{k+1}$ in {\bf Step 3} of Algorithm \ref{alg:ssn}, we also compute
		\begin{align*}
		x^{k+1} = {\rm Prox}_{\sigma_k\lambda  p}(x^k+\sigma_k c-\sigma_k A^Tu^{k+1}).
		\end{align*}
		Algorithm \ref{alg:ssn} is terminated if the stopping criteria \eqref{eq:stopA_pre_ppa} and \eqref{eq:stopB_pre_ppa} are satisfied.
		\\[3pt]
		\STATE {\bfseries Step 2}. Update $\sigma_{k+1} \uparrow \sigma_{\infty} \leq \infty$, $k\leftarrow k+1$.
		\UNTIL{Stopping criterion is satisfied.}
	\end{algorithmic}
\end{algorithm}

Note that in practice, it is difficult to directly check the stopping criteria \eqref{eq:stopA_pre_ppa} and \eqref{eq:stopB_pre_ppa}. We prove that they can be achieved by other implementable criteria based on $u^{k+1}$ and $x^{k+1}$ as follows.
\begin{proposition}\label{prop:stop_of_ATA}
	In Algorithm \ref{alg:ppdna}, the stopping criteria \eqref{eq:stopA_pre_ppa} and \eqref{eq:stopB_pre_ppa} can be achieved by the following two implementable ones:
	\begin{align}
	f_k(x^{k+1})-\psi_k(u^{k+1})&\leq \frac{\epsilon_k^2}{2\sigma_k},\quad \epsilon_k \geq 0,\quad \sum_{k=0}^{\infty}\epsilon_k <\infty,\tag{A'}\label{eq:stopA_ATA}\\
	f_k(x^{k+1})-\psi_k(u^{k+1})&\leq \frac{\delta_k^2}{2\sigma_k} \|x^{k+1}-x^k\|_{{\cal M}}^2,\quad 0\leq \delta_k < 1,\quad \sum_{k=0}^{\infty}\delta_k <\infty,\tag{B'}\label{eq:stopB_ATA}
	\end{align}
	where $f_k(\cdot)$ is defined in \eqref{eq:pre_ppa} and $\psi_k(\cdot)$ is defined in \eqref{eq:ATA_D}.
\end{proposition}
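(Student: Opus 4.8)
The plan is to combine two ingredients: the strong convexity of the primal objective $f_k$ with respect to the $\mathcal{M}$-inner product, and weak duality between the primal subproblem \eqref{eq:pre_ppa} and its dual \eqref{eq:ATA_D}. First I would observe that $f_k(x)=h(Ax)-\langle c,x\rangle+\lambda p(x)+\frac{1}{2\sigma_k}\|x-x^k\|_{\mathcal{M}}^2$ is the sum of a convex function and the term $\frac{1}{2\sigma_k}\|\cdot-x^k\|_{\mathcal{M}}^2$, whose Hessian is $\frac{1}{\sigma_k}\mathcal{M}$; hence $f_k$ is $\frac{1}{\sigma_k}$-strongly convex in the norm $\|\cdot\|_{\mathcal{M}}$. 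Since $\mathcal{P}_k(x^k)=\arg\min_x f_k(x)$ is the exact minimizer, the first-order term in the strong-convexity inequality vanishes and we get
\begin{align*}
f_k(x)\ \geq\ f_k(\mathcal{P}_k(x^k))+\frac{1}{2\sigma_k}\|x-\mathcal{P}_k(x^k)\|_{\mathcal{M}}^2,\qquad \forall\,x\in\mathbb{R}^n.
\end{align*}
Taking $x=x^{k+1}$ gives $\frac{1}{2\sigma_k}\|x^{k+1}-\mathcal{P}_k(x^k)\|_{\mathcal{M}}^2\leq f_k(x^{k+1})-f_k(\mathcal{P}_k(x^k))$.

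Next I would use that $\psi_k(\cdot)$ is the dual objective of \eqref{eq:pre_ppa}, so weak duality yields $\psi_k(u)\leq \min_x f_k(x)=f_k(\mathcal{P}_k(x^k))$ for every $u\in\mathbb{R}^m$, in particular for $u=u^{k+1}$. Combining this with the previous display,
\begin{align*}
\frac{1}{2\sigma_k}\|x^{k+1}-\mathcal{P}_k(x^k)\|_{\mathcal{M}}^2\ \leq\ f_k(x^{k+1})-f_k(\mathcal{P}_k(x^k))\ \leq\ f_k(x^{k+1})-\psi_k(u^{k+1}).
\end{align*}
From here the conclusion is immediate: if \eqref{eq:stopA_ATA} holds, then $\frac{1}{2\sigma_k}\|x^{k+1}-\mathcal{P}_k(x^k)\|_{\mathcal{M}}^2\leq \frac{\epsilon_k^2}{2\sigma_k}$, i.e.\ $\|x^{k+1}-\mathcal{P}_k(x^k)\|_{\mathcal{M}}\leq\epsilon_k$, which is \eqref{eq:stopA_pre_ppa}; likewise, if \eqref{eq:stopB_ATA} holds, then $\|x^{k+1}-\mathcal{P}_k(x^k)\|_{\mathcal{M}}\leq\delta_k\|x^{k+1}-x^k\|_{\mathcal{M}}$, which is \eqref{eq:stopB_pre_ppa}.

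The main obstacle is the one non-routine step, namely justifying weak duality in the exact form $\psi_k(u^{k+1})\leq f_k(\mathcal{P}_k(x^k))$. This requires deriving \eqref{eq:ATA_D} as the Lagrangian/Fenchel dual of \eqref{eq:pre_ppa}: one introduces the splitting $y=Ax$, dualizes with a multiplier $u$, and rewrites the two resulting inner minimizations over $y$ and over $x$ as Moreau envelopes ${\rm E}_{\sigma_k h/\tau}$ and ${\rm E}_{\sigma_k\lambda p}$, being careful that all the constant terms ($\frac{\tau}{2\sigma_k}\|Ax^k\|^2$, $\frac{1}{2\sigma_k}\|x^k\|^2$, etc.) are assembled correctly. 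Since $h$ is finite-valued and $p$ is proper closed convex, strong (hence in particular weak) duality holds, so no qualification issue arises. Everything else — the strong-convexity estimate and the substitution of the two hypotheses — is a one-line computation; I would only double-check that the strong convexity modulus in $\|\cdot\|_{\mathcal{M}}$ is exactly $\frac{1}{\sigma_k}$, which is what makes the factor $\frac{1}{2\sigma_k}$ on the right-hand sides of \eqref{eq:stopA_ATA}–\eqref{eq:stopB_ATA} line up precisely with $\epsilon_k$ and $\delta_k$ in \eqref{eq:stopA_pre_ppa}–\eqref{eq:stopB_pre_ppa}.
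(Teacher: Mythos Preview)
Your proposal is correct and follows essentially the same route as the paper. The paper writes out your strong-convexity step explicitly via the subgradient inequality for $f$ at $\mathcal{P}_k(x^k)$ combined with the quadratic expansion of $\frac{1}{2\sigma_k}\|\cdot-x^k\|_{\mathcal{M}}^2$, and then invokes strong duality $\inf f_k=\sup\psi_k$ where you only need (and correctly note) weak duality; the key inequality $\frac{1}{2\sigma_k}\|x^{k+1}-\mathcal{P}_k(x^k)\|_{\mathcal{M}}^2\le f_k(x^{k+1})-\psi_k(u^{k+1})$ and the final substitutions are identical.
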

\begin{proof}
	We know from \cite[Exercise 8.8]{rockafellar2009variational} that $\partial f_k(x)=\partial f(x)+(1/\sigma_k) {\cal M}(x-x^k)$. Since ${\cal P}_k(x^k)=\arg\min f_k(x)$, we have that $0\in \partial f_k({\cal P}_k(x^k))$, which means there exists $v\in \partial f({\cal P}_k(x^k))$ such that
	\begin{align*}
	0=v+\frac{1}{\sigma_k} {\cal M}({\cal P}_k(x^k)-x^k).
	\end{align*}
	Since $f_k({\cal P}_k(x^k))=\inf f_k$, it holds that
	\begin{align*}
	&f_k(x^{k+1})-\inf f_k=f(x^{k+1})-f({\cal P}_k(x^k))+\frac{1}{2\sigma_k}\|x^{k+1}-x^k\|_{{\cal M}}^2-\frac{1}{2\sigma_k}\|{\cal P}_k(x^k)-x^k\|_{{\cal M}}^2\\
	&\geq \langle v,x^{k+1}-{\cal P}_k(x^k)\rangle+\frac{1}{2\sigma_k}\langle x^{k+1}+{\cal P}_k(x^k)-2x_k,x^{k+1}-{\cal P}_k(x^k)\rangle_{{\cal M}}=\frac{1}{2\sigma_k}\|x^{k+1}-{\cal P}_k(x^k)\|_{{\cal M}}^2.
	\end{align*}
	By the strongly duality, we know that $\inf f_k=\sup \psi_k$, thus
	\begin{align*}
	\frac{1}{2\sigma_k}\|x^{k+1}-{\cal P}_k(x^k)\|_{{\cal M}}^2\leq f_k(x^{k+1})-\inf f_k=f_k(x^{k+1})-\sup \psi_k\leq f_k(x^{k+1})-\psi_k(u^{k+1}).
	\end{align*}
	Therefore, the stopping criteria \eqref{eq:stopA_pre_ppa} and \eqref{eq:stopB_pre_ppa} can be achieved by \eqref{eq:stopA_ATA} and \eqref{eq:stopB_ATA}, respectively.
\end{proof}

\section{Closed-form solution to the proximal mapping of the weighted exclusive lasso regularizer $\Delta^{\mathcal{G},w}(\cdot)$ and its generalized Jacobian}
\label{sec:proxJacobian}
When solving the weighted exclusive lasso model with the PPDNA, it is clear that we need the proximal mapping of the weighted exclusive lasso regularizer and its generalized Jacobian. In this section, we give a systematically study of the weighted exclusive lasso regularizer. Specifically, we derive the closed-form solution to the proximal mapping ${\rm Prox}_{\nu p}(\cdot)$ for any $\nu>0$ with $p(\cdot)=\Delta^{\mathcal{G},w}(\cdot)$, and characterize the corresponding generalized Jacobian. Note that by the definition of $\Delta^{\mathcal{G},w}(\cdot)$, it is fundamental for us to study ${\rm Prox}_{\rho\|w\circ\cdot\|_1^2}(\cdot)$ first, where $w\in \mathbb{R}^n_{++}$ is a given weight vector and $\rho>0$ is a given scalar. 

\subsection{Closed-form solution to ${\rm Prox}_{\rho\|w\circ\cdot\|_1^2}(\cdot)$}
\label{subsec:prox}
For any $a\in \mathbb{R}^n$, we define
\begin{align}
x(a)&:=\underset{x \in \mathbb{R}^n_{+}}{\arg\min}\ \Big\{\frac{1}{2}\|x - a\|^2 + \rho\|w\circ x\|_1^2
\Big\}=\underset{x \in \mathbb{R}^n_{+}}{\arg\min} \ \Big\{\frac{1}{2}\|x - a\|^2 +  \rho x^T(ww^T)x \Big\}.\label{eq:pos_problem}
\end{align}
Our derivation of ${\rm Prox}_{\rho\|w\circ\cdot\|_1^2}(\cdot)$ is based on the analysis of the KKT optimality conditions. 
We first consider the case when the input vector $a\geq 0$, then one can see that ${\rm Prox}_{\rho\|w\circ\cdot\|_1^2}(a)$ must also be a nonnegative vector, which means that ${\rm Prox}_{\rho\|w\circ\cdot\|_1^2}(a)=x(a)$ for any $a\geq 0$.

Note that since the objective function in \eqref{eq:pos_problem} is strongly convex, the minimization problem has a unique solution, which can be computed as in the following proposition.
\begin{proposition}\label{prop:pos_prox}
	Given $\rho > 0$ and $a \in \mathbb{R}_{+}^n \backslash\{0\}$. Let $a^{w}\in \mathbb{R}^n$ be defined as $a^{w}_i :=a_i/w_i$, for $i=1,\cdots,n$. There exists a permutation matrix $\Pi$ such that $\Pi a^{w}$ is sorted in a non-increasing order. Denote $\tilde{a}=\Pi a $,  $\tilde{w}=\Pi w $, and
	\begin{align*}
	s_{i} = \sum_{j=1}^{i}
	\tilde{w}_j \tilde{a}_j, \quad
	L_{i} = \sum_{j=1}^{i}\tilde{w}_j^2,\quad
	\alpha_i = \frac{  s_{i}}{1 + 2\rho L_{i}}, \quad i = 1, 2, \dots, n.
	\end{align*}
	Let $\bar{\alpha}=\max_{1\leq i \leq n}\alpha_i$. Then, $x(a)$ in \eqref{eq:pos_problem} can be computed analytically as $x(a) = (a -2\rho \bar{\alpha} w)^+$.
\end{proposition}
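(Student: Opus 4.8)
The plan is to verify the stated formula through the Karush--Kuhn--Tucker (KKT) optimality conditions of \eqref{eq:pos_problem}; since its objective is strongly convex and the feasible set $\mathbb{R}^n_+$ has nonempty interior, these conditions are both necessary and sufficient. On $\mathbb{R}^n_+$ one has $\|w\circ x\|_1 = w^Tx$, so \eqref{eq:pos_problem} reads $\min_{x\ge 0}\{\tfrac12\|x-a\|^2+\rho(w^Tx)^2\}$, with KKT system: there is $\mu\in\mathbb{R}^n_+$ such that $x-a+2\rho(w^Tx)w=\mu$ and $\mu_ix_i=0$ for all $i$. Writing $t:=w^Tx\ge 0$, a coordinatewise case analysis shows this is equivalent to $x=(a-2\rho t\,w)^+$ for some $t\ge 0$ solving the consistency equation $t=\phi(t)$, where $\phi(t):=\sum_{i=1}^n w_i(a_i-2\rho t\,w_i)^+$ and $\mu$ is then recovered as $x-a+2\rho tw$. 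Since $\phi$ is continuous, nonincreasing, with $\phi(0)=w^Ta>0$ (because $a\ge 0$, $a\ne 0$) and $\phi(t)\downarrow 0$ as $t\to\infty$, the map $t\mapsto\phi(t)-t$ has a unique root $t^*$, so $x(a)=(a-2\rho t^*w)^+$; it remains to show $t^*=\bar\alpha$.

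Because $\bar\alpha$ and the map $(a,w)\mapsto(a-2\rho\bar\alpha w)^+$ are equivariant under a simultaneous permutation of the coordinates, I would assume without loss of generality that $a/w$ is already sorted non-increasingly, i.e. $\tilde a=a$ and $\tilde w=w$. The key is an elementary lemma for the finite sequence $\alpha_i=s_i/(1+2\rho L_i)$. Using $s_{i+1}=s_i+w_{i+1}a_{i+1}$ and $L_{i+1}=L_i+w_{i+1}^2$, a direct computation gives
\[
\alpha_{i+1}-\alpha_i=\frac{w_{i+1}^2}{1+2\rho L_{i+1}}\Big(\frac{a_{i+1}}{w_{i+1}}-2\rho\alpha_i\Big),
\]
so ${\rm sign}(\alpha_{i+1}-\alpha_i)={\rm sign}\big(a_{i+1}/w_{i+1}-2\rho\alpha_i\big)$; moreover, rewriting $\alpha_i=\dfrac{s_{i-1}+w_ia_i}{(1+2\rho L_{i-1})+2\rho w_i^2}$ exhibits $\alpha_i$ as a weighted average of $\alpha_{i-1}$ (with $\alpha_0:=0$) and $\dfrac{a_i}{2\rho w_i}$, hence $\alpha_i$ lies between these two numbers; in particular $\alpha_i\ge\alpha_{i-1}$ forces $\alpha_i\le a_i/(2\rho w_i)$.

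Next, fix an index $k$ with $\alpha_k=\bar\alpha$. From $\alpha_k\ge\alpha_{k-1}$ and the weighted-average bound, $a_k/w_k\ge 2\rho\bar\alpha$, and the monotonicity of $i\mapsto a_i/w_i$ then yields $a_i/w_i\ge 2\rho\bar\alpha$ for all $i\le k$. If $k<n$, the maximality of $\bar\alpha$ gives $\alpha_{k+1}-\alpha_k\le 0$, whence $a_{k+1}/w_{k+1}\le 2\rho\bar\alpha$, and by sorting $a_i/w_i\le 2\rho\bar\alpha$ for all $i>k$ (this is vacuous if $k=n$). Hence $(a_i-2\rho\bar\alpha w_i)^+=a_i-2\rho\bar\alpha w_i$ for $i\le k$ and $(a_i-2\rho\bar\alpha w_i)^+=0$ for $i>k$, so
\[
\phi(\bar\alpha)=\sum_{i=1}^k w_i\big(a_i-2\rho\bar\alpha w_i\big)=s_k-2\rho\bar\alpha L_k=\bar\alpha,
\]
the last equality using $s_k=\bar\alpha(1+2\rho L_k)$, which is just $\bar\alpha=\alpha_k$. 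Therefore $\bar\alpha$ is the fixed point $t^*$, and $x(a)=(a-2\rho\bar\alpha w)^+$, as claimed.

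I expect the main obstacle to be the second and third paragraphs: extracting the correct monotonicity/averaging structure of $\{\alpha_i\}$ and using it to determine precisely which coordinates of $(a-2\rho\bar\alpha w)^+$ are positive, together with a careful treatment of boundary ties (indices with $a_i/w_i=2\rho\bar\alpha$, which contribute $0$ to $\phi$ and may be placed on either side of the split). The KKT reduction in the first paragraph and the final verification $\phi(\bar\alpha)=\bar\alpha$ are then routine.
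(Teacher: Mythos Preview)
Your proposal is correct and follows essentially the same KKT-based approach as the paper: both reduce to the consistency equation $t=\sum_i w_i(a_i-2\rho t w_i)^+$ for $t=w^Tx(a)$ and identify its solution with $\bar\alpha=\max_i\alpha_i$. The only cosmetic difference is direction: the paper starts from the (existing) optimal $\beta=w^Tx^*$, locates the cutoff index $k$, shows $\beta=\alpha_k$, and then proves $\alpha_k=\max_i\alpha_i$ by a direct algebraic computation of $\alpha_k-\alpha_i$; you instead start from $\bar\alpha$, use the difference formula and the mediant/weighted-average structure of $\{\alpha_i\}$ to pin down the cutoff, and then verify $\phi(\bar\alpha)=\bar\alpha$. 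These are the same argument traversed in opposite orders.
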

\begin{proof}
	The KKT conditions for \eqref{eq:pos_problem} are given by
	\begin{equation}\label{eq: pos_KKT}
	x - a + 2\rho w w^Tx + \mu = 0,\;
	\mu \circ x =0,\; \mu \leq 0, \;x \geq 0,
	\end{equation}
	where $\mu \in \mathbb{R}^n$ is the corresponding dual multiplier. If $(x^*,\mu^{*})$ satisfies the KKT conditions \eqref{eq: pos_KKT},  by denoting $\beta=w^T x^*$, we can see that
	\begin{equation*}
	x^* +  \mu^* = a - 2\rho\beta w ,\;
	\mu^* \circ x^* =0,\; \mu^* \leq 0, \;x^* \geq 0.
	\end{equation*}
	Therefore, $(x^*,\mu^{*})$ have the representations:
	\[
	x^*=(a - 2\rho\beta w)^+,\quad \mu^*=(a - 2\rho\beta w)^-.
	\]
	Then our aim is to find the value of $\beta$. By the definition of $\beta$, we can see that
	\begin{align*}
	\beta = \sum_{i=1}^n w_i x^*_i=\sum_{i=1}^n w_i (a_i - 2\rho\beta w_i)^+=\sum_{i=1}^n w^2_i ((a^{w})_i - 2\rho\beta )^+=\sum_{i=1}^n \tilde{w}^2_i ((\Pi a^{w})_i - 2\rho\beta )^+.
	\end{align*}
	Note that there must exist some index $j$ such that $(\Pi  a^{w})_j>2\rho \beta$, otherwise, we have $\beta=0$ and $\Pi a^{w}\leq 0$ (equivalent to $a\leq 0$), which contradicts the assumption that $0\not=a \geq 0$. Since $\Pi a^{w}$ is sorted in a non-increasing order, there exists an index $k$ such that $\tilde{a}_{1}/\tilde{w}_{1} \geq \dots \geq \tilde{a}_{k}/\tilde{w}_{k} \geq 2\rho\beta > \tilde{a}_{k+1}/\tilde{w}_{k+1} \geq \dots \geq \tilde{a}_{n}/\tilde{w}_{n}$. Therefore,
	\[
	\beta = \sum_{i=1}^{k} \tilde{w}^2_i ((\Pi a^{w})_i - 2\rho\beta )=\sum_{i=1}^{k}  \tilde{w}_i\tilde{a}_i- 2\rho\beta \sum_{i=1}^{k}  \tilde{w}^2_i=s_{k} -2\rho \beta L_{k} ,
	\]
	which means that
	\[
	\beta = \frac{s_{k} }{1+2\rho L_{k} }=\alpha_k.
	\]
	Next we show that $\beta=\bar{\alpha}$, which means $\alpha_k\geq \alpha_i$ for all $i$. For $i<k$,
	\begin{align*}
	&\ \alpha_k - \alpha_{i} =  \frac{(1 + 2\rho L_{i}) s_{k} - (1+2\rho L_{k}) s_{i}}{(1+2\rho L_{k})(1+2\rho L_{i})}=  \frac{(1 + 2\rho L_{k})(s_k-s_i) -2\rho s_k\sum_{j=i+1}^k\tilde{w}_j^2}{(1+2\rho L_{k})(1+2\rho L_{i})}\\
	&=  \frac{(1 + 2\rho L_{k})\sum_{j=i+1}^k\tilde{w}_j\tilde{a}_j -2\rho (1 + 2\rho L_{k})\beta\sum_{j=i+1}^k\tilde{w}_j^2}{(1+2\rho L_{k})(1+2\rho L_{i})}=  \frac{\sum_{j=i+1}^k \tilde{w}_j^2 (\tilde{a}_j/\tilde{w}_j -2\rho \beta)}{1+2\rho L_{i}}\geq 0.
	\end{align*}
	We can prove that $\alpha_k\geq \alpha_i$ for all $i>k$ in a similar way. Therefore, we have that $\beta=\alpha_k=\max_{1\leq i \leq n}\alpha_i=\bar{\alpha}$. Finally, since the solution to \eqref{eq:pos_problem} is unique, we have
	\begin{equation*}
	x(a)=x^*=(a - 2\rho\beta w)^+=(a - 2\rho\bar{\alpha} w)^+.\qedhere
	\end{equation*}
\end{proof}

With the result above, we now give the closed-form solution to ${\rm Prox}_{\rho\|w\circ\cdot\|_1^2}(a)$ for any $a \in \mathbb{R}^n$.
\begin{proposition}\label{prop:proxmapping}
	For given $\rho > 0$ and $a \in \mathbb{R}^n$, we have
	\[
	{\rm Prox}_{\rho\|w\circ\cdot\|_1^2}(a)={\rm sign}(a)\circ {\rm Prox}_{\rho\|w\circ\cdot\|_1^2}(|a|)={\rm sign}(a)\circ x(|a|),
	\]
	where $x(\cdot)$ is defined in \eqref{eq:pos_problem} and can be computed explicitly via Proposition \ref{prop:pos_prox} in $O(n\log n)$ operations.
\end{proposition}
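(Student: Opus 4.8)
The plan is to exploit the invariance of the regularizer $g(\cdot):=\rho\|w\circ\cdot\|_1^2$ under coordinatewise sign changes, in order to reduce the computation of $\mathrm{Prox}_g(a)$ for an arbitrary $a\in\mathbb{R}^n$ to the nonnegative case already settled in Proposition \ref{prop:pos_prox}. The key observation is that for any sign vector $s\in\{-1,1\}^n$ we have $g(s\circ y)=\rho\|w\circ(s\circ y)\|_1^2=\rho\|w\circ y\|_1^2=g(y)$, since $|w_is_iy_i|=|w_iy_i|$ for every $i$; thus $g$ is even in each coordinate. Recall also (as noted just before Proposition \ref{prop:pos_prox}) that whenever the input is nonnegative the unconstrained proximal point is nonnegative, so $\mathrm{Prox}_g(b)=x(b)$ for all $b\geq 0$.

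First I would handle the case in which every coordinate of $a$ is nonzero. Put $s:=\mathrm{sign}(a)\in\{-1,1\}^n$, so that $a=s\circ|a|$ and $s\circ s=e$. For every $y\in\mathbb{R}^n$,
\begin{align*}
\tfrac12\|s\circ y-a\|^2+g(s\circ y)=\tfrac12\|s\circ(y-|a|)\|^2+g(y)=\tfrac12\|y-|a|\|^2+g(y),
\end{align*}
because multiplication by a $\pm1$ vector is an isometry. Hence $y\mapsto s\circ y$ is a bijection of $\mathbb{R}^n$ that maps the objective of $\mathrm{Prox}_g(|a|)$ exactly onto the objective of $\mathrm{Prox}_g(a)$; by uniqueness of the minimizers (strong convexity of both objectives), $\mathrm{Prox}_g(a)=s\circ\mathrm{Prox}_g(|a|)=s\circ x(|a|)$, which is the claimed identity.

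Next I would dispose of the zero coordinates. If $a_i=0$, let $x^*:=\mathrm{Prox}_g(a)$; flipping the sign of the $i$-th coordinate of $x^*$ alters neither $\tfrac12\|x^*-a\|^2$ (as $(x_i^*)^2=(-x_i^*)^2$) nor $g(x^*)$, so the flipped vector is again a minimizer, and uniqueness forces $x_i^*=-x_i^*$, i.e.\ $x_i^*=0$. With the convention $\mathrm{sign}(0)=0$ this agrees with $(\mathrm{sign}(a)\circ x(|a|))_i=0\cdot x(|a|)_i=0$, while on the support of $a$ the bijection argument above applies after restricting to those coordinates; the trivial case $a=0$ gives $0$ on both sides. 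Combining the two cases yields $\mathrm{Prox}_g(a)=\mathrm{sign}(a)\circ x(|a|)$ for all $a\in\mathbb{R}^n$.

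Finally, the complexity count follows by inspecting the recipe in Proposition \ref{prop:pos_prox}: forming $a^w$, sorting it, and evaluating the prefix sums $s_i,L_i$, the ratios $\alpha_i$, their maximum $\bar\alpha$, and the thresholded vector $(|a|-2\rho\bar\alpha w)^+$ together cost $O(n\log n)$, the sort being the dominant term. I do not expect a substantive obstacle here; the only points needing care are the bookkeeping for zero coordinates together with the $\mathrm{sign}(0)=0$ convention, and phrasing the sign-flip reduction as an equivalence of the two minimization problems rather than a one-sided implication.
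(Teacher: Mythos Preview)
The paper does not supply a proof of this proposition; it states the result and moves directly to a remark, treating the sign-reduction as routine once Proposition~\ref{prop:pos_prox} is in hand. Your argument is correct and fills in exactly the details the paper omits: the coordinatewise sign-invariance of $g$ together with the isometry $y\mapsto s\circ y$ gives the reduction to the nonnegative case, and the $O(n\log n)$ count is accurate with the sort dominating.

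One minor point of phrasing: the clause ``on the support of $a$ the bijection argument above applies after restricting to those coordinates'' is slightly loose, since $g(x)=(\sum_i w_i|x_i|)^2$ couples all coordinates and is not separable across a subset. The clean way to handle mixed signs and zeros in one stroke is to choose any $s\in\{-1,1\}^n$ with $s_i=\mathrm{sign}(a_i)$ whenever $a_i\neq 0$ (and $s_i=1$, say, when $a_i=0$); then $a=s\circ|a|$ and your global bijection argument yields $\mathrm{Prox}_g(a)=s\circ x(|a|)$. On zero coordinates both $s_i\,x(|a|)_i$ and $\mathrm{sign}(a_i)\,x(|a|)_i$ vanish (the former because your sign-flip argument, applied to $|a|$ itself, forces $x(|a|)_i=0$ when $|a|_i=0$), so the two expressions agree everywhere. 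This avoids any talk of restriction and closes the argument without further case analysis.
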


\begin{remark}
	The proximal mapping of $\rho\|w\circ\cdot\|_1^2$ is mentioned in \cite[Proposition 4]{kowalski2009sparse}, but the derivation contains some errors. More precisely, in section 4.1 of \cite{kowalski2009sparse}, after a change of variables, the author tries to find the optimal solution of a constrained optimization problem by directly setting the gradient to zero (equations (23) and (24) in \cite{kowalski2009sparse}), which is not mathematically rigorous. One can use a simple example to demonstrate the gap. Consider the problem
\begin{align*}
\min_{x_{1,1},x_{1,2}\in\mathbb{R}}\Big\{\frac{1}{2}(x_{1,1}-1)^2+\frac{1}{2}(x_{1,2}-0.5)^2+(|x_{1,1}|+|x_{1,2}|)^2\Big\}.
\end{align*}
The true solution is $x^*=[1/3;0]$. But equation (23) in \cite{kowalski2009sparse} is equivalent to
\begin{align*}
|x_{1,1}| = 1-2(|x_{1,1}|+|x_{1,2}|),\quad
|x_{1,2}| = 0.5-2(|x_{1,1}|+|x_{1,2}|).
\end{align*}
Thus $|x_{1,1}|=2/5$, $|x_{1,2}|=-1/10$. But the latter contradicts the fact that $|x_{1,2}|\geq 0$.
\end{remark}

\subsection{The generalized Jacobian of ${\rm Prox}_{\rho\|w\circ\cdot\|_1^2}(\cdot)$ }
Note that in the SSN method (Algorithm \ref{alg:ssn}), it is critical for us to derive an explicit element in the generalized Jacobian of ${\rm Prox}_{\rho\|w\circ\cdot\|_1^2}(\cdot)$. According to Proposition \ref{prop:proxmapping}, we know that in order to obtain the generalized Jacobian of ${\rm Prox}_{\rho\|w\circ\cdot\|_1^2}(\cdot)$, we need to study the generalized Jacobian of $x(\cdot)$ first. Here, we derive the HS-Jacobian of $x(\cdot)$ based on the quadratic programming (QP) reformulation of $x(\cdot)$. For any $a\in \mathbb{R}^n$, if we denote $ Q := I_n +  2\rho ww^T \in \mathbb{R}^{n\times n}$, then \eqref{eq:pos_problem} can be equivalently written as
\begin{align*}
x(a)=\underset{x \in \mathbb{R}^n}{\arg\min} \ \Big\{ \frac{1}{2} \langle x, Qx \rangle - \langle x, a \rangle \mid x\geq 0 \Big\}.
\end{align*}
Based on the above strongly convex QP, we can derive the HS-Jacobian of $x(a)$ by applying the general results established in \cite{han1997newton,li2018efficiently}. As one can see from \eqref{eq: pos_KKT} and the uniqueness of $x(a)$, the dual multiplier $\mu$ is also unique, which we denote as $\mu(a)$. Then \eqref{eq: pos_KKT} can be equivalently written as
\begin{align*}
Qx(a) - a + \mu(a) = 0,\;
\mu(a)^Tx(a) = 0,\;\mu(a) \leq 0, \; x(a)\geq 0.
\end{align*}
Denote the active set of $x(a)$ as
\begin{align}
I(a): = {\{i \in \{ 1,\ldots,n \} \mid x(a)_i = 0\}}.
\label{eq:Ia}
\end{align}
Now, we define a collection of index sets:
\[
{\cal K}(a): =  \{\;  K\subseteq \{1,\ldots,n\} \mid  {\rm supp}(\mu(a)) \subseteq K\subseteq I(a)\},
\]
where ${\rm supp}(\mu(a))$ denotes  the set of indices $i$ such that $\mu(a)_i \neq 0$. Note that the set ${\cal K}(a)$ is non-empty \cite{han1997newton}. Since the B-subdifferential $\partial_B x(a)$ is difficult to compute, we define the multifunction $\partial_{\rm HS}x(\cdot)$: $\mathbb{R}^{n} 	\rightrightarrows \mathbb{R}^{n\times n}$ as
\begin{equation}\label{eq:HS_QP}
\partial_{\rm HS}x(a) := \left\{
P\in\mathbb{R}^{n\times n}\mid P = Q^{-1} - Q^{-1}
I_K^T\left( I_K Q^{-1}I_K^T\right)^{-1}I_K Q^{-1},\; K\in{\cal K}(a)
\right\},
\end{equation}
as a computational replacement for $\partial_Bx(a)$, where $I_K$ is the matrix consisting of the rows of $I_n$, indexed by $K$. The set $\partial_{\rm HS}x(a)$ is known as the HS-Jacobian  of $x(\cdot)$ at $a$.

Define the multifunction $\partial_{\rm HS} {\rm Prox}_{\rho\|w\circ\cdot\|_1^2}:\mathbb{R}^n\rightrightarrows \mathbb{R}^{n\times n}$ by
\begin{align}
\partial_{\rm HS} {\rm Prox}_{\rho\|w\circ\cdot\|_1^2}(a)=\left\{
{\rm Diag}(\theta ) P {\rm Diag}(\theta ) \mid \theta\in {\rm SGN}(a),\ P \in \partial_{\rm HS}x(|a|)
\right\}, \; \forall\; a\in \mathbb{R}^n,
\label{eq: Jacobian_prox_12}
\end{align}
where $\partial_{\rm HS}x(\cdot)$ is defined in \eqref{eq:HS_QP}, and ${\rm SGN}(\cdot)$ is defined in \eqref{eq: define_SGN}. The next proposition states the reason why we can treat $\partial_{\rm HS} {\rm Prox}_{\rho\|w\circ\cdot\|_1^2}(a)$ as the generalized Jacobian of ${\rm Prox}_{\rho\|w\circ\cdot\|_1^2}(\cdot)$ at $a$.
\begin{proposition}\label{prop: property_HS_prox}
	$\partial_{\rm HS} {\rm Prox}_{\rho\|w\circ\cdot\|_1^2}(\cdot)$ is a nonempty, compact valued and upper-semicontinuous multifunction. For any $a\in \mathbb{R}^n$, the elements in $\partial_{\rm HS} {\rm Prox}_{\rho\|w\circ\cdot\|_1^2}(a)$ are all symmetric and positive semidefinite. Moreover, ${\rm Prox}_{\rho\|w\circ\cdot\|_1^2}(\cdot)$ is strongly semismooth with respect to $\partial_{\rm HS} {\rm Prox}_{\rho\|w\circ\cdot\|_1^2}(\cdot)$.
\end{proposition}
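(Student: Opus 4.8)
The plan is to derive all three properties of $\partial_{\rm HS} {\rm Prox}_{\rho\|w\circ\cdot\|_1^2}(\cdot)$ from the corresponding properties of the HS-Jacobian $\partial_{\rm HS}x(\cdot)$, which are already available from the general theory in \cite{han1997newton,li2018efficiently}, and then track how the composition ${\rm Prox}_{\rho\|w\circ\cdot\|_1^2}(a)={\rm sign}(a)\circ x(|a|)$ from Proposition \ref{prop:proxmapping} transforms these properties. First I would recall that by \cite{han1997newton,li2018efficiently}, applied to the strongly convex QP reformulation $x(a)=\arg\min\{\frac12\langle x,Qx\rangle-\langle x,a\rangle\mid x\geq 0\}$ with $Q=I_n+2\rho ww^T\succ 0$, the multifunction $\partial_{\rm HS}x(\cdot)$ is nonempty, compact-valued, upper-semicontinuous, its elements are all symmetric positive semidefinite (each being $Q^{-1}$ minus a $Q^{-1}$-orthogonal projection onto a subspace, which is a $Q^{-1}$-self-adjoint projector, hence of the stated PSD form), and $x(\cdot)$ is strongly semismooth with respect to $\partial_{\rm HS}x(\cdot)$.

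Next I would handle the three claims in turn. For nonemptiness, compact-valuedness and upper-semicontinuity of $\partial_{\rm HS}{\rm Prox}_{\rho\|w\circ\cdot\|_1^2}(\cdot)$: the set ${\rm SGN}(a)$ is nonempty and compact for every $a$, $\partial_{\rm HS}x(|a|)$ is nonempty and compact, and the map $(\theta,P)\mapsto {\rm Diag}(\theta)P{\rm Diag}(\theta)$ is continuous, so the image set in \eqref{eq: Jacobian_prox_12} is nonempty and compact. Upper-semicontinuity follows by combining the upper-semicontinuity of $a\mapsto {\rm SGN}(a)$ (a standard fact for the subdifferential of $\|\cdot\|_1$) with that of $a\mapsto \partial_{\rm HS}x(|a|)$ (upper-semicontinuity of $\partial_{\rm HS}x(\cdot)$ composed with the continuous map $a\mapsto|a|$) and the continuity of the conjugation map; here I would use that the composition/product of upper-semicontinuous compact-valued multifunctions with continuous single-valued maps is again upper-semicontinuous. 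For symmetry and positive semidefiniteness: any element has the form ${\rm Diag}(\theta)P{\rm Diag}(\theta)$ with $P=P^T\succeq 0$, so it is symmetric, and for any $v$, $v^T{\rm Diag}(\theta)P{\rm Diag}(\theta)v=({\rm Diag}(\theta)v)^TP({\rm Diag}(\theta)v)\geq 0$.

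The main obstacle is the strong semismoothness claim, since ${\rm Prox}_{\rho\|w\circ\cdot\|_1^2}(\cdot)$ is a genuine composition ${\rm sign}(a)\circ x(|a|)$ and one must show $\partial_{\rm HS}{\rm Prox}$ as defined captures the right directional behavior. I would argue as follows. Away from the coordinate hyperplanes (i.e.\ on the open set where all $a_i\neq 0$), ${\rm sign}(\cdot)$ is locally constant and $a\mapsto|a|$ is smooth, so ${\rm Prox}_{\rho\|w\circ\cdot\|_1^2}$ locally equals ${\rm Diag}(\theta_0)\,x(\,{\rm Diag}(\theta_0)a\,)$ for the fixed sign pattern $\theta_0$, and strong semismoothness there reduces directly to that of $x(\cdot)$ with respect to $\partial_{\rm HS}x(\cdot)$, with the chain rule producing exactly the elements ${\rm Diag}(\theta_0)P{\rm Diag}(\theta_0)$. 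At points where some $a_i=0$, I would use the fact that ${\rm Prox}_{\rho\|w\circ\cdot\|_1^2}(\cdot)$ is globally Lipschitz with modulus $1$ (it is a proximal mapping) together with the piecewise-linear-quadratic structure of $\rho\|w\circ\cdot\|_1^2$, which by \cite[Proposition 12.30]{rockafellar2009variational} makes the proximal mapping piecewise affine, hence strongly semismooth everywhere; the remaining task is to verify that the candidate set $\partial_{\rm HS}{\rm Prox}_{\rho\|w\circ\cdot\|_1^2}(a)$ in \eqref{eq: Jacobian_prox_12} is large enough to dominate the difference quotient, which follows because when $a_i=0$ the index $i$ lies in the active set $I(a)$ of $x(|a|)$ and the freedom in choosing $K\in{\cal K}(|a|)$ together with $\theta_i\in[-1,1]$ exactly accounts for the one-sided derivatives arising from perturbing $a_i$ to either sign. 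I would conclude by invoking that strong semismoothness with respect to a multifunction is preserved under composition with (strongly) semismooth single-valued maps and under taking the set-valued "outer" chain rule, so the three displayed properties follow, and finish by noting these are precisely what is needed for Proposition \ref{prop: jacobian_psi} and the SSN convergence in Theorem \ref{thm:convergence_SSN}.
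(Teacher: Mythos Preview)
Your treatment of nonemptiness, compactness, upper-semicontinuity, symmetry and positive semidefiniteness is essentially fine and runs parallel to the paper, though the paper argues two of these a bit differently: for upper-semicontinuity it establishes the stronger local nesting $\partial_{\rm HS}{\rm Prox}_{\rho\|w\circ\cdot\|_1^2}(a')\subseteq \partial_{\rm HS}{\rm Prox}_{\rho\|w\circ\cdot\|_1^2}(a)$ for all $a'$ near $a$ (by combining the analogous nesting for $\partial_{\rm HS}x$ from \cite[Proposition~2]{li2018efficiently} with ${\rm SGN}(a')\subseteq{\rm SGN}(a)$), and for positive semidefiniteness it verifies explicitly that each $P\in\partial_{\rm HS}x(a)$ equals $({\rm Diag}(\xi)Q{\rm Diag}(\xi))^{\dagger}\succeq 0$ via \cite[Proposition~3]{li2018efficiently}, rather than citing PSD as a black box.

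The strong semismoothness argument, however, has a real gap. The required estimate is: for $a'\to a$ and \emph{every} $M\in\partial_{\rm HS}{\rm Prox}_{\rho\|w\circ\cdot\|_1^2}(a')$ (at the nearby point, not at $a$), one has ${\rm Prox}(a')-{\rm Prox}(a)-M(a'-a)=O(\|a'-a\|^2)$. Your discussion at boundary points instead argues that $\partial_{\rm HS}{\rm Prox}(a)$ is ``large enough'', which is the wrong direction of inclusion; knowing that the proximal map is piecewise affine gives strong semismoothness with respect to the Clarke Jacobian, but transferring this to the specific multifunction \eqref{eq: Jacobian_prox_12} still requires showing that every element of $\partial_{\rm HS}{\rm Prox}(a')$ reproduces the correct linear piece at $a'$, which you do not verify. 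The final appeal to a composition rule does not close the gap either, since $|\cdot|$ and ${\rm sign}(\cdot)$ are nonsmooth and you have not checked that the set in \eqref{eq: Jacobian_prox_12} is exactly what a set-valued chain rule would output.

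The paper avoids all of this with a short direct computation that works uniformly, with no case split on whether $a_i=0$. For $a'$ in a small neighbourhood of $a$ one has ${\rm SGN}(a')\subseteq{\rm SGN}(a)$, and for any $\theta\in{\rm SGN}(a')$ the identity ${\rm Diag}(\theta)(a'-a)=|a'|-|a|$ holds. Combining this with $x(|a'|)-x(|a|)-P(|a'|-|a|)=0$ for $P\in\partial_{\rm HS}x(|a'|)$ (from \cite[Proposition~2]{li2018efficiently}) and the observation that $(x(|a'|))_i\neq 0\Rightarrow a'_i\neq 0$, so that $\theta\circ x(|a'|)={\rm sign}(a')\circ x(|a'|)$ and likewise at $a$, yields the \emph{exact} identity ${\rm Prox}(a')-{\rm Prox}(a)-M(a'-a)=0$ for every $M\in\partial_{\rm HS}{\rm Prox}(a')$. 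This is the missing ingredient in your argument.
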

\begin{proof}
	By the definition of $\partial_{\rm HS} {\rm Prox}_{\rho\|w\circ\cdot\|_1^2}(\cdot)$, we can see that it is a nonempty and compact valued multifunction. For $a\in \mathbb{R}^n$, according to \cite[Proposition 2]{li2018efficiently}, we know that there exists a neighborhood $U$ of $a$ such that for any $a' \in U$, $\partial_{\rm HS}x(|a'|)\subseteq \partial_{\rm HS}x(|a|)$ and
	\begin{align}
	x(|a'|) - x(|a|) - P(|a'| - |a|)=0,\quad \forall P \in \partial_{\rm HS}x(|a'|).\label{eq: strongly_xa}
	\end{align}
	By the definition of ${\rm SGN}(\cdot)$, if we take the neighbourhood $U$ of $a$ to be sufficiently small, then we have ${\rm SGN}(a')\subseteq {\rm SGN}(a)$ for any $a'\in U$. Therefore, it holds that $\partial_{\rm HS} {\rm Prox}_{\rho\|w\circ\cdot\|_1^2}(a')\subseteq \partial_{\rm HS} {\rm Prox}_{\rho\|w\circ\cdot\|_1^2}(a)$ for all $a'\in U$, which implies that $\partial_{\rm HS} {\rm Prox}_{\rho\|w\circ\cdot\|_1^2}$ is upper-semicontinuous.
	
	Since ${\rm Prox}_{\rho\|w\circ\cdot\|_1^2}(\cdot)$ is piecewise linear and Lipschitz continuous, it is directionally differentiable according to \cite{facchinei2007finite}. Note that for all $a'\in U$, since ${\rm SGN}(a')\subseteq {\rm SGN}(a)$, we have ${\rm Diag}(\theta) (a'-a)=|a'|-|a|$ with any $\theta \in {\rm SGN}(a')$. Therefore, from \eqref{eq: strongly_xa}, it holds that for any $a'\in U$,
	\begin{align*}
	\theta \circ x(|a'|) - \theta \circ x(|a|) - {\rm Diag}(\theta )P{\rm Diag}(\theta )(a'-a)=0,\quad \forall
	\theta \in {\rm SGN}(a'),\ \forall P \in \partial_{\rm HS}x(|a'|).
	\end{align*}
	By the formula of $x(\cdot)$ in Proposition \ref{prop:pos_prox}, for any $i$, if $(x(|a'|))_i\neq 0$, then we must have $a'_i\neq 0$, which further implies $\theta_i={\rm sign}(a'_i)$ for each $\theta\in {\rm SGN}(a')$. Therefore, we know that for all $a'\in U$,
	\begin{align*}
	\theta \circ x(|a'|) = {\rm sign}(a')\circ x(|a'|),\quad \theta \circ x(|a|) = {\rm sign}(a)\circ x(|a|),\quad \forall \theta \in {\rm SGN}(a')\subseteq {\rm SGN}(a).
	\end{align*}
    That is to say, when $a'\in U$,
	\begin{align*}
	{\rm Prox}_{\rho\|w\circ\cdot\|_1^2}(a')-{\rm Prox}_{\rho\|w\circ\cdot\|_1^2}(a)-M(a'-a)=0,\quad \forall M\in \partial_{\rm HS} {\rm Prox}_{\rho\|w\circ\cdot\|_1^2}(a').
	\end{align*}
	Thus ${\rm Prox}_{\rho\|w\circ\cdot\|_1^2}(\cdot)$ is strongly semismooth with respect to $\partial_{\rm HS} {\rm Prox}_{\rho\|w\circ\cdot\|_1^2}(\cdot)$.

    The symmetry of the elements in $\partial_{\rm HS} {\rm Prox}_{\rho\|w\circ\cdot\|_1^2}(a)$ for any $a\in \mathbb{R}^n$ follows naturally by the definition in \eqref{eq: Jacobian_prox_12}. In order to prove that the elements in $\partial_{\rm HS} {\rm Prox}_{\rho\|w\circ\cdot\|_1^2}(a)$ are all positive semidefinite for any $a\in \mathbb{R}^n$, it suffices to prove that the elements in $\partial_{\rm HS}x(a)$ are all positive semidefinite for any $a\in \mathbb{R}^n$. Given $a\in \mathbb{R}^n$, for any $K\in {\cal K}(a)$, denote $\xi\in \mathbb{R}^{n}$ with $\xi_i = 0$ if $i\in K$, and $\xi_i = 1$ otherwise. Let $\Xi = I_n-{\rm Diag}(\xi)$. After some algebraic multiplications, we can see that
    \begin{align*}
    I_K^T\left( I_K Q^{-1}I_K^T\right)^{-1}I_K=(\Xi Q^{-1}\Xi)^{\dagger}=\Xi (\Xi Q^{-1}\Xi)^{\dagger}\Xi,
    \end{align*}
    where the last equality follows from the fact that $\Xi$ is a $0$-$1$ diagonal matrix. Then by \cite[Proposition 3]{li2018efficiently}, we have
    \begin{align*}
    P&:=Q^{-1} - Q^{-1}I_K^T\left( I_K Q^{-1}I_K^T\right)^{-1}I_K Q^{-1}\\
    &=Q^{-1} - Q^{-1}\Xi (\Xi Q^{-1}\Xi)^{\dagger}\Xi Q^{-1}= ({\rm Diag}(\xi) Q {\rm Diag}(\xi))^{\dagger}\succeq 0,
    \end{align*}
    which completes the proof.
\end{proof}

For practical implementation of the SSN method, we need a specific element in $\partial_{\rm HS} {\rm Prox}_{\rho\|w\circ\cdot\|_1^2}(a)$ at any $a \in \mathbb{R}^n$. In the following proposition, we provide a highly efficient way to compute one specific element in $\partial_{\rm HS} {\rm Prox}_{\rho\|w\circ\cdot\|_1^2}(a)$, which is a $0$-$1$ diagonal matrix plus a rank-one correction.
\begin{proposition}\label{compute_M}
	Given $a \in \mathbb{R}^n$, if $P_0 :=  Q^{-1} - Q^{-1}I_{I(|a|)}^T
	(I_{I(|a|)} Q^{-1}
	I_{I(|a|)}^T)^{-1}
	I_{I(|a|)}Q^{-1}$,
	where $I(\cdot)$ is defined in \eqref{eq:Ia}, then
	the matrix
	\begin{equation}\label{eq:M0}
	M_0 := {\rm Diag}({\rm sign}(a)) P_0  {\rm Diag}({\rm sign}(a))
	\end{equation}
	is an element in the set $\partial_{\rm HS} {\rm Prox}_{\rho\|w\circ\cdot\|_1^2}(a)$. Moreover, if we define $\xi\in \mathbb{R}^{n}$ with $\xi_i = 0$ if $i\in I(|a|)$, $\xi_i = 1$ otherwise, and $\tilde{w} :=({\rm sign}(a)\circ \xi)\circ w$, then $M_0$ defined in \eqref{eq:M0} can be computed as
	\begin{equation*}
	M_0 = {\rm Diag}(\xi)-\frac{2\rho}{1+2\rho (\tilde{w}^T\tilde{w})} \tilde{w}\tilde{w}^T.
	\end{equation*}
\end{proposition}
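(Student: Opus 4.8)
The plan is to prove the two assertions in turn: first that $M_0$ genuinely belongs to the HS-Jacobian set $\partial_{\rm HS}{\rm Prox}_{\rho\|w\circ\cdot\|_1^2}(a)$, and then that it admits the stated closed form as a $0$-$1$ diagonal matrix plus a rank-one correction.

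For membership, I would first note that the active set $K := I(|a|)$ is the largest element of the collection ${\cal K}(|a|)$, since ${\rm supp}(\mu(|a|))\subseteq I(|a|)\subseteq I(|a|)$ trivially; hence the matrix $P_0$, which is exactly the generator in \eqref{eq:HS_QP} associated with this particular $K$, lies in $\partial_{\rm HS}x(|a|)$. Next I would check that ${\rm sign}(a)\in{\rm SGN}(a)$: its $i$-th entry is ${\rm sign}(a_i)\in\{\pm1\}$ when $a_i\neq 0$ and $0\in[-1,1]$ when $a_i=0$, matching the definition \eqref{eq: define_SGN}. Then $M_0={\rm Diag}({\rm sign}(a))\,P_0\,{\rm Diag}({\rm sign}(a))$ is precisely of the form prescribed in \eqref{eq: Jacobian_prox_12} with $\theta={\rm sign}(a)$ and $P=P_0$, so $M_0\in\partial_{\rm HS}{\rm Prox}_{\rho\|w\circ\cdot\|_1^2}(a)$.

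For the explicit formula, I would recycle the identity established inside the proof of Proposition \ref{prop: property_HS_prox}: for $K=I(|a|)$ and the associated $0$-$1$ vector $\xi$ (which is exactly the $\xi$ in the statement), one has $P_0=({\rm Diag}(\xi)\,Q\,{\rm Diag}(\xi))^{\dagger}$. Since $Q=I_n+2\rho ww^T$ and $\xi$ is $0$-$1$, a direct computation gives ${\rm Diag}(\xi)\,Q\,{\rm Diag}(\xi)={\rm Diag}(\xi)+2\rho(\xi\circ w)(\xi\circ w)^T$. This matrix is supported on $S:=\{i:\xi_i=1\}$; its restriction to $\mathbb{R}^S$ is the invertible matrix $I_{|S|}+2\rho\hat w\hat w^T$, where $\hat w$ is the subvector of $\xi\circ w$ (equivalently of $w$) indexed by $S$, whose inverse is given by Sherman--Morrison, and extending by zero off $S$ yields
\[
P_0={\rm Diag}(\xi)-\frac{2\rho}{1+2\rho\|\xi\circ w\|^2}\,(\xi\circ w)(\xi\circ w)^T.
\]
Conjugating by ${\rm Diag}({\rm sign}(a))$ then produces $M_0$, and I would simplify using the fact---read off from the KKT characterization of $x(|a|)$, namely $|a|_i=x(|a|)_i+2\rho(w^Tx(|a|))w_i$ whenever $x(|a|)_i>0$---that $\xi_i=1$ forces $a_i\neq 0$, hence $({\rm sign}(a))_i^2=1$ on $S$. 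This gives ${\rm Diag}({\rm sign}(a)){\rm Diag}(\xi){\rm Diag}({\rm sign}(a))={\rm Diag}(\xi)$, ${\rm sign}(a)\circ\xi\circ w=\tilde w$, and $\|\xi\circ w\|^2=\tilde w^T\tilde w$, delivering $M_0={\rm Diag}(\xi)-\frac{2\rho}{1+2\rho(\tilde w^T\tilde w)}\,\tilde w\tilde w^T$.

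I expect the main obstacle to be the pseudo-inverse step: one cannot simply apply Sherman--Morrison to the singular matrix ${\rm Diag}(\xi)\,Q\,{\rm Diag}(\xi)$, and must instead justify passing to the nonsingular principal submatrix indexed by $S$, inverting there, and padding with zeros to recover the Moore--Penrose inverse---this is valid precisely because ${\rm Diag}(\xi)$ is a $0$-$1$ diagonal matrix, and is exactly the content borrowed from the previous proof via $P_0=({\rm Diag}(\xi)Q{\rm Diag}(\xi))^{\dagger}$. The secondary technical point is the implication $\xi_i=1\Rightarrow a_i\neq 0$, without which the conjugation identities above would fail; everything else reduces to routine bookkeeping with Hadamard products.
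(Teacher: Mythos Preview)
Your proposal is correct and follows essentially the same route as the paper: both invoke the identity $P_0=({\rm Diag}(\xi)\,Q\,{\rm Diag}(\xi))^{\dagger}$ from the proof of Proposition~\ref{prop: property_HS_prox}, compute this pseudo-inverse as ${\rm Diag}(\xi)-\frac{2\rho}{1+2\rho\|\xi\circ w\|^2}(\xi\circ w)(\xi\circ w)^T$, and then simplify the conjugation by ${\rm Diag}({\rm sign}(a))$ using the implication $\xi_i=1\Rightarrow a_i\neq 0$ (which the paper phrases contrapositively as $a_i=0\Rightarrow\xi_i=0$). Your justification of the pseudo-inverse step via restriction to the nonsingular principal block and Sherman--Morrison, and of the membership $M_0\in\partial_{\rm HS}{\rm Prox}_{\rho\|w\circ\cdot\|_1^2}(a)$ via $I(|a|)\in{\cal K}(|a|)$ and ${\rm sign}(a)\in{\rm SGN}(a)$, are spelled out more explicitly than in the paper but are not materially different.
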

\begin{proof}
	The first part of the proposition follows immediately from the definition of $\partial_{\rm HS} {\rm Prox}_{\rho\|w\circ\cdot\|_1^2}(a)$. Similarly to the proof in Proposition \ref{prop: property_HS_prox}, we can obtain that
	\begin{align*}
	P_0 = ({\rm Diag}(\xi) Q {\rm Diag}(\xi))^{\dagger}.
	\end{align*}
	Denote $\hat{w}=\xi \circ w$. Since $Q = I_n +  2\rho ww^T \in \mathbb{R}^{n\times n}$, it holds that
	\begin{align*}
	P_0 &= ({\rm Diag}(\xi) Q {\rm Diag}(\xi))^{\dagger}
	= ({\rm Diag}(\xi) + 2\rho \hat{w}\hat{w}^T)^{\dagger}={\rm Diag}(\xi) -
	\frac{2\rho}{1+2\rho (\hat{w}^T\hat{w})} \hat{w}\hat{w}^T.
	\end{align*}
	Next we show that ${\rm sign}(a)\circ{\rm sign}(a)\circ\xi = \xi$. First, we note that $a_i=0$ implies that $(x(|a|))_i=0$, and hence $\xi_i=0$. Thus if $a_i =0$, then ${\rm sign}(a_i)^2 \xi_i = 0 = \xi_i$. For the case when $a_i\not=0$, we clearly have ${\rm sign}(a_i)^2 \xi_i = \xi_i$. Similarly, we could prove that $\hat{w}^T\hat{w}=\tilde{w}^T\tilde{w}$. Now it is easy to see that
	\begin{align*}
	M_0 &=   {\rm Diag}({\rm sign}(a)) \Big({\rm Diag}(\xi)-
	\frac{2\rho}{1+2\rho (\hat{w}^T\hat{w})} \hat{w}\hat{w}^T\Big)  {\rm Diag}({\rm sign}(a)) \\
	&= {\rm Diag}(\xi) -
	\frac{2\rho}{1+2\rho (\tilde{w}^T\tilde{w})} \tilde{w}\tilde{w}^T.
	\end{align*}
	This completes the proof.
\end{proof}

\subsection{The proximal mapping of $\Delta^{\mathcal{G},w}(\cdot)$ and its generalized Jacobian}
\label{subsec: jacobian}
Based on the equation \eqref{eq: reformulation_exclusive} and the discussions in the previous subsection, we summarize the following proposition, which gives the proximal mapping of $\Delta^{\mathcal{G},w}(\cdot)$ and its corresponding generalized Jacobian.
\begin{proposition}\label{prop: proxmapping_and_jacobian}
	Given $\nu>0$, and $p(\cdot)=\Delta^{\mathcal{G},w}(\cdot)$. The following statements hold.
	
	\noindent (1) The proximal mapping ${\rm Prox}_{\nu p}(\cdot)$ can be computed as
	\begin{align*}
	{\rm Prox}_{\nu p}(x)&={\cal P}^T
	\underset{y\in \mathbb{R}^n}{\arg\min}\  \Big\{
	\frac{1}{2}\|y-{\cal P}x\|^2+\nu \sum_{j=1}^l \|({\cal P} w)^{(j)} \circ y^{(j)}\|_1^2\Big\}\\
	&={\cal P}^T \big[{\rm Prox}_{\nu\|({\cal P} w)^{(1)}\circ\cdot\|_1^2}(({\cal P} x)^{(1)});\cdots;{\rm Prox}_{\nu\|({\cal P} w)^{(l)}\circ\cdot\|_1^2}(({\cal P} x)^{(l)})\big],
	\end{align*}
	where ${\rm Prox}_{\nu\|({\cal P} w)^{(j)}\circ\cdot\|_1^2}(\cdot)$, for each $j=1,\cdots,l$, is defined in Proposition \ref{prop:proxmapping}.
	
	\noindent(2) Define the multifunction $\partial_{\rm HS}{\rm Prox}_{\nu p}: \mathbb{R}^n\rightrightarrows \mathbb{R}^{n\times n}$ as
	\begin{align*}
	\partial_{\rm HS}{\rm Prox}_{\nu p}(x)=\left\{ {\cal P}^T {\rm Diag}(M_1,\cdots,M_l){\cal P}\mid M_j\in \partial_{\rm HS} {\rm Prox}_{\nu\|({\cal P} w)^{(j)}\circ\cdot\|_1^2}(({\cal P} x)^{(j)}), j = 1,\cdots,l	\right\},
	\end{align*}
	where $\partial_{\rm HS} {\rm Prox}_{\nu\|({\cal P} w)^{(j)}\circ\cdot\|_1^2}(\cdot)$, for each $j = 1,\cdots,l$, is defined in \eqref{eq: Jacobian_prox_12}. Then $\partial_{\rm HS}{\rm Prox}_{\nu p}(\cdot)$ can be regarded as the generalized Jacobian of ${\rm Prox}_{\nu p}(\cdot)$ satisfying the following properties.
	\begin{itemize}
		\item[(a)] $\partial_{\rm HS}{\rm Prox}_{\nu p}(\cdot)$ is a nonempty, compact valued and upper-semicontinuous multifunction;
		\item[(b)] for any $x\in \mathbb{R}^n$, the elements in $\partial_{\rm HS}{\rm Prox}_{\nu p}(x)$ are all symmetric and positive semidefinite;
		\item[(c)] ${\rm Prox}_{\nu p}(\cdot)$ is strongly semismooth with respect to $\partial_{\rm HS}{\rm Prox}_{\nu p}(\cdot)$.
	\end{itemize}
	In addition, we could construct a specific element in $\partial_{\rm HS}{\rm Prox}_{\nu p}(x)$ according to Proposition \ref{compute_M}.
\end{proposition}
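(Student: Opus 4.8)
The plan is to prove Proposition~\ref{prop: proxmapping_and_jacobian} by exploiting the block-separable structure exposed in the reformulation \eqref{eq: reformulation_exclusive}, so that both the proximal mapping and its generalized Jacobian decouple across the $l$ groups, and then simply aggregate the single-group results from Propositions~\ref{prop:proxmapping}, \ref{prop: property_HS_prox} and \ref{compute_M}. First I would establish part~(1). Since ${\cal P}$ is an orthogonal rearrangement of coordinates (a permutation-type linear map, so ${\cal P}^T {\cal P} = I_n$ and $\|{\cal P}x\| = \|x\|$), the change of variables $y = {\cal P}x$ turns
\begin{align*}
{\rm Prox}_{\nu p}(x) = \underset{z\in \mathbb{R}^n}{\arg\min}\Big\{\tfrac{1}{2}\|z - x\|^2 + \nu\,\Delta^{\mathcal{G},w}(z)\Big\}
\end{align*}
into the problem of minimizing $\tfrac{1}{2}\|y - {\cal P}x\|^2 + \nu \sum_{j=1}^l \|({\cal P}w)^{(j)}\circ y^{(j)}\|_1^2$ over $y$, followed by applying ${\cal P}^T$; here I use \eqref{eq: reformulation_exclusive} to rewrite $\Delta^{\mathcal{G},w}$. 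Because the objective in $y$ splits as a sum over the blocks $y^{(1)},\ldots,y^{(l)}$ with no coupling, the minimizer is obtained blockwise, and the $j$-th block is exactly ${\rm Prox}_{\nu\|({\cal P}w)^{(j)}\circ\cdot\|_1^2}(({\cal P}x)^{(j)})$, which Proposition~\ref{prop:proxmapping} computes in closed form.

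Next I would handle part~(2). Differentiating (in the generalized sense) the composition ${\rm Prox}_{\nu p} = {\cal P}^T \circ \big({\rm Prox}_{\nu\|({\cal P}w)^{(1)}\circ\cdot\|_1^2}\times\cdots\times{\rm Prox}_{\nu\|({\cal P}w)^{(l)}\circ\cdot\|_1^2}\big)\circ {\cal P}$ and using that ${\cal P}$ is a fixed linear map, the natural candidate Jacobian is ${\cal P}^T {\rm Diag}(M_1,\ldots,M_l){\cal P}$ with $M_j$ ranging over the HS-Jacobian of the $j$-th single-group proximal map — this is precisely the definition of $\partial_{\rm HS}{\rm Prox}_{\nu p}$ in the statement. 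For properties (a)--(c): nonemptiness and compact-valuedness are inherited from each $\partial_{\rm HS}{\rm Prox}_{\nu\|({\cal P}w)^{(j)}\circ\cdot\|_1^2}$ via Proposition~\ref{prop: property_HS_prox}, since a (finite) Cartesian product of nonempty compact sets is nonempty and compact and the map $(M_1,\ldots,M_l)\mapsto {\cal P}^T{\rm Diag}(M_1,\ldots,M_l){\cal P}$ is continuous; upper-semicontinuity follows from the upper-semicontinuity of each block multifunction together with the continuity of the block assembly and the fact that $x\mapsto ({\cal P}x)^{(j)}$ is linear (so $\partial_{\rm HS}{\rm Prox}_{\nu p}(x')\subseteq \partial_{\rm HS}{\rm Prox}_{\nu p}(x)$ for $x'$ in a small neighbourhood of $x$, using the analogous local inclusions from Proposition~\ref{prop: property_HS_prox}). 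For (b), symmetry is immediate because ${\rm Diag}(M_1,\ldots,M_l)$ is symmetric when each $M_j$ is and $({\cal P}^T{\rm Diag}(M_1,\ldots,M_l){\cal P})^T = {\cal P}^T{\rm Diag}(M_1,\ldots,M_l){\cal P}$; positive semidefiniteness follows since for any $v\in\mathbb{R}^n$, $\langle v, {\cal P}^T{\rm Diag}(M_1,\ldots,M_l){\cal P}v\rangle = \sum_{j=1}^l \langle ({\cal P}v)^{(j)}, M_j ({\cal P}v)^{(j)}\rangle \geq 0$ by Proposition~\ref{prop: property_HS_prox}.

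For (c), the strong semismoothness, I would again use the block decomposition: by Proposition~\ref{prop: property_HS_prox} each ${\rm Prox}_{\nu\|({\cal P}w)^{(j)}\circ\cdot\|_1^2}(\cdot)$ is strongly semismooth with respect to its HS-Jacobian, i.e. there is a neighbourhood of $({\cal P}x)^{(j)}$ on which the exact first-order expansion holds with zero remainder; intersecting these neighbourhoods (pulled back through the linear map $x\mapsto ({\cal P}x)^{(j)}$) gives a neighbourhood $U$ of $x$ on which, for every $M = {\cal P}^T{\rm Diag}(M_1,\ldots,M_l){\cal P}\in\partial_{\rm HS}{\rm Prox}_{\nu p}(x')$,
\begin{align*}
{\rm Prox}_{\nu p}(x') - {\rm Prox}_{\nu p}(x) - M(x'-x) = {\cal P}^T\big[\,M_1(({\cal P}x')^{(1)}-({\cal P}x)^{(1)}) - (\cdots);\ldots\big] = 0,
\end{align*}
so the required (strong, hence order-$2$) estimate holds trivially; Lipschitz continuity and directional differentiability of ${\rm Prox}_{\nu p}$ follow from those of ${\rm Prox}_{\nu p}$ being a composition of a linear map with piecewise-linear Lipschitz maps. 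Finally, the last sentence — that a specific element can be constructed via Proposition~\ref{compute_M} — is immediate: take $M_j = M_0$ from Proposition~\ref{compute_M} applied to each block $({\cal P}x)^{(j)}$ with weight $({\cal P}w)^{(j)}$, and assemble ${\cal P}^T{\rm Diag}(M_1,\ldots,M_l){\cal P}$. I expect the only mildly delicate point to be bookkeeping the neighbourhoods and the local inclusions uniformly across all $l$ blocks so that the single-group semismoothness statements can be glued; everything else is routine transfer of structure through the orthogonal block map ${\cal P}$.
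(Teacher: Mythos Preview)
Your proposal is correct and matches the paper's approach exactly: the paper presents this proposition as a direct summary of the block-separable reformulation \eqref{eq: reformulation_exclusive} together with Propositions~\ref{prop:proxmapping}, \ref{prop: property_HS_prox} and \ref{compute_M}, and in fact gives no detailed proof beyond that remark. Your write-up simply spells out the routine aggregation argument the paper leaves implicit.
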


\section{Numerical experiments}
\label{sec:numerical}
In this section, we perform numerical experiments to evaluate the performance of our proposed AS strategy with the PPDNA for solving exclusive lasso models. For simplicity, we take the weight vector $w$ to be all ones, and the vector $c$ to be zero. The exclusive lasso model can be described as
\begin{equation}\label{eq: exclusive-lasso}
\min_{x\in \mathbb{R}^n} \ \Big\{h(Ax) + \lambda \sum_{j=1}^l \| x_{g_j}\|_1^2\Big\}.
\end{equation}
All our computational results are obtained by running {\sc Matlab} on a windows workstation (12-core, Intel Xeon E5-2680 @ 2.50GHz, 128G RAM).

In the numerical experiments, we mainly focus on three aspects.
\begin{itemize}[noitemsep,topsep=0pt]
	\item[(1)] We compare our proposed PPDNA for solving \eqref{eq: exclusive-lasso} with three state-of-the-art first-order algorithms. The efficiency and scalability of the PPDNA indicates that it is the best choice for solving the exclusive lasso model with a
	given $\lambda >0$.
	\item[(2)] We compare the proposed AS strategy for generating a solution path for \eqref{eq: exclusive-lasso} with the commonly used warm-start approach. The numerical experiments demonstrate the extremely good performance of the AS strategy together with the PPDNA.
	\item[(3)] We apply the exclusive lasso model \eqref{eq: exclusive-lasso} to some real applications, including the index ETF (exchange traded fund) in finance, image and text classifications in multi-class classifications.
\end{itemize}

In our experiments, we measure the accuracy of the obtained solution by the relative KKT residual:
\begin{align*}
\eta_{\rm KKT} := \frac{\|x - {\rm Prox}_{\lambda p}(x - A^T\nabla h(Ax)\|}{1 + \|x\| + \|A^T\nabla h(Ax)\|}. 
\end{align*}
We terminate the tested algorithm when $\eta_{\rm KKT}\leq \varepsilon$, where $\varepsilon > 0$ is a given tolerance, which is set to be $10^{-6}$ by default.

\subsection{Comparison of the PPDNA with other algorithms}
\label{sec: numerical PPDNA}

In this subsection, we compare our proposed PPDNA for solving \eqref{eq: exclusive-lasso} with a given $\lambda>0$ to three state-of-the-art first-order algorithms: ILSA \cite{kong2014exclusive}, ADMM with step length $\kappa = 1.618$ \cite{fazel2013hankel} and APG with restart under the setting described in \cite{becker2011templates}. We take $\tau=1/\lambda_{\max}(AA^T)$ in the PPDNA. In the experiments, we also terminate PPDNA when it reaches the maximum iteration of $200$, and terminate ILSA, ADMM and APG when they reach the maximum iteration of $200,000$. In addition, we set the maximum computation time of each experiment as $1$ hour. To demonstrate the efficiency and scalability of the algorithms, we perform the time comparison on synthetic datasets over a range of scales.

\paragraph{The regularized linear regression problem with synthetic data.} In the model \eqref{eq: exclusive-lasso}, we take $h(y):=\sum_{i=1}^m (y_i-b_i)^2/2$, where $b\in \mathbb{R}^m$ is given. Motivated by \cite{campbell2017within}, we generate the synthetic data using the model $b = Ax ^* +\epsilon$, where $x ^*$ is the predefined true solution and $\epsilon \sim \mathcal{N}(0, I_m)$ is a random noise vector. Given the number of observations $m$, the number of groups $l$ and the number of features $p$ in each group, we generate each row of the matrix $A \in \mathbb{R}^{m \times lp}$ by independently sampling a vector from a multivariate normal distribution $\mathcal{N}(0, \Sigma)$, where $\Sigma$ is a Toeplitz covariance matrix with entries $\Sigma_{ij} = 0.9^{|i - j|}$ for features in the same group, and $\Sigma_{ij} = 0.3^{|i - j|}$ for features in different groups. For the ground-truth $x^{*}$, we randomly generate $10$ nonzero elements in each group with i.i.d values drawn from the uniform distribution on $[0,10]$.

We mainly focus on solving the exclusive lasso model in the high-dimensional settings. Hence, we fix $m$ to be $200$ and $l$ to be $20$, but vary the number of features $p$ in each group from $50$ to $1000$. That is, we vary the total number of features $n=lp$ from $1000$ to $20000$. To compare the robustness of different algorithms with respect to the parameter $\lambda$, we test all the algorithms under two different values of $\lambda$. More results for the PPDNA over a sequence of $\lambda$ can be found in the next subsection. The time comparison is shown in Figure \ref{fig: time-comparison}, which demonstrates the superior performance of PPDNA, especially for large-scale instances, comparing to ILSA, ADMM and APG. As one can observe, for the largest instance, PPDNA is at least a hundred times faster ADMM, which is the best performing first-order method.

\begin{figure}[H]
	\begin{center}
		\includegraphics[width = 0.8\columnwidth]{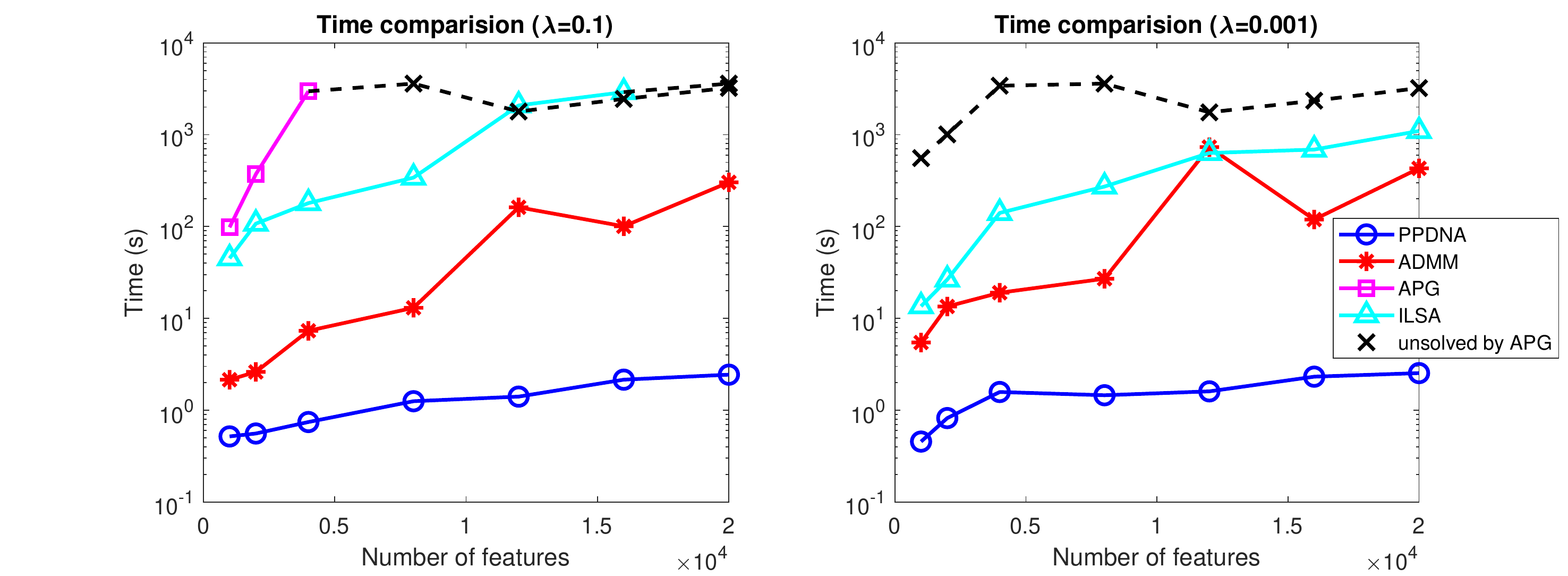}
		\setlength{\abovecaptionskip}{-2pt}
		\setlength{\belowcaptionskip}{-15pt}
		\caption{Time comparison among PPDNA, ILSA, ADMM and APG for linear regression on synthetic datasets. }
		\label{fig: time-comparison}
	\end{center}
\end{figure}

More numerical results on higher-dimensional cases (larger $m$ and larger $p$) are shown in Table \ref{tab: ls}. As one can see from Figure \ref{fig: time-comparison}, APG and ILSA are not efficient enough to solve large-scale instances, thus we only compare PPDNA with ADMM in these higher-dimensional cases. For the largest two instances in Table \ref{tab: ls}, PPDNA is able to solve the problems within one minute, whereas ADMM fails to solve them even after $1$ hour.

\begin{table}[H]\fontsize{8pt}{11pt}\selectfont
	\setlength{\abovecaptionskip}{0pt}
	\setlength{\belowcaptionskip}{0pt}
	\caption{Comparison between PPDNA and ADMM for linear regression on synthetic datasets. In the table, ``23(106)" means ``PPDNA iterations (total inner SSN iterations)". Time is in the format of (hours:minutes:seconds). Values in bold means that the algorithm fails to solve the instance to the required accuracy.}\label{tab: ls}
	\renewcommand\arraystretch{1.2}
	\centering
	\begin{tabular}{|c|c|c|c|c|}
		\hline
		& & iter &  $\eta_{\rm KKT}$ & time\\
		\hline
		Data $(m,l,p)$  & $\lambda$  & PPDNA $|$ ADMM & PPDNA $|$ ADMM & PPDNA $|$ ADMM \\
		\hline
		\multirow{2}*{\tabincell{c}{$(500,20,2000)$} }
		&1e-1 & 23(106) $|$ 23332  & 8.5e-7 $|$ 1.0e-6 & 0:00:10 $|$ 0:08:08  \\
		&1e-3 & 30(95) $|$ 167472  & 6.3e-7 $|$ \bf{1.5e-6}  & 0:00:11 $|$ 1:00:00 \\
		\hline
		\multirow{2}*{\tabincell{c}{$(500,20,3000)$} }
		&1e-1 & 23(97) $|$ 46226  & 3.9e-7 $|$ \bf{2.1e-6} & 0:00:11 $|$ 1:00:00  \\
		&1e-3 & 29(100) $|$ 50402  & 7.9e-7 $|$ \bf{9.0e-6} & 0:00:15 $|$ 1:00:01 \\
		\hline
		\multirow{2}*{\tabincell{c}{$(1000,20,2000)$} }
		&1e-1 & 21(132) $|$ 16208  & 5.0e-7 $|$ 1.0e-6 & 0:00:26 $|$ 0:09:03  \\
		&1e-3 & 28(160) $|$ 89242  & 7.8e-7 $|$ 1.0e-6 & 0:00:33 $|$ 0:50:41 \\
		\hline
		\multirow{2}*{\tabincell{c}{$(1000,20,4000)$} }
		&1e-1 & 22(107) $|$ 15644  & 7.1e-7 $|$ \bf{1.2e-5} & 0:00:25 $|$ 1:00:00  \\
		&1e-3 & 29(126) $|$ 15680  & 9.7e-7 $|$ \bf{3.6e-3} & 0:00:39 $|$ 1:00:01 \\
		\hline
	\end{tabular}
\end{table}

\paragraph{The regularized logistic regression problem with synthetic data.}  To test the regularized logistic regression problem, we take $h(y) = \sum_{i=1}^m \log(1 + \exp(-b_i y_i))$ in \eqref{eq: exclusive-lasso}, where $b\in \{-1,1\}^m$ is given. We use the same synthetic datasets described in the previous part, except for letting $b_i=1$ if $ Ax ^* +\epsilon\geq 0$, and $-1$ otherwise. As one can see in the previous experiments, APG and ILSA are very time-consuming when solving large-scale exclusive lasso problems compared to PPDNA and ADMM. Thus for logistic regression problems, we only compare PPDNA with ADMM. The numerical results are shown in Table \ref{tab: logistic}. Again, we can observe the superior performance of PPDNA against ADMM, and the performance gap is especially wide when the parameter $\lambda=10^{-5}$. For example, PPDNA is at least $110$ times faster than ADMM in solving the instance $(500,20,5000)$ with $\lambda=10^{-5}$.

\begin{table}[H]\fontsize{8pt}{11pt}\selectfont
	\setlength{\abovecaptionskip}{0pt}
	\setlength{\belowcaptionskip}{0pt}
	\caption{Time comparison between PPDNA and ADMM for logistic regression on synthetic datasets.}\label{tab: logistic}
	\renewcommand\arraystretch{1.2}
	\centering
	\begin{tabular}{|c|c|c|c|c|}
		\hline
		& & iter &  $\eta_{\rm KKT}$ & time\\
		\hline
		Data $(m,l,p)$  & $\lambda$  & PPDNA $|$ ADMM  & PPDNA $|$ ADMM & PPDNA $|$ ADMM \\
		\hline
		\multirow{3}*{\tabincell{c}{$(500,20,5000)$} }
		&1e-1 & 12(45) $|$ 2167  & 2.6e-7 $|$ 1.0e-6 & 0:00:06 $|$ 0:04:45  \\
		&1e-3 & 37(51) $|$ 6187  & 2.1e-7 $|$ 1.0e-6 & 0:00:09 $|$ 0:10:17 \\
		&1e-5 & 67(68) $|$ 21584  & 8.9e-7 $|$ 1.0e-6 & 0:00:18 $|$ 0:33:54 \\
		\hline
		\multirow{3}*{\tabincell{c}{$(1000,20,8000)$} }
		&1e-1 & 13(50) $|$ 1947  & 9.1e-8 $|$ 1.0e-6 & 0:00:18 $|$ 0:14:02  \\
		&1e-3 & 57(69) $|$ 6991  & 9.2e-7 $|$ 1.0e-6 & 0:00:32 $|$ 0:39:01 \\
		&1e-5 & 89(90) $|$ 10519  & 9.7e-7 $|$ \bf{1.4e-5} & 0:01:01 $|$ 1:00:00 \\
		\hline
		\multirow{3}*{\tabincell{c}{$(2000,20,10000)$} }
		&1e-1 & 11(48) $|$ 1625  & 7.3e-7 $|$ 1.0e-6 & 0:00:46 $|$ 0:33:02  \\
		&1e-3 & 62(72) $|$ 3522  & 6.0e-7 $|$ \bf{5.8e-5} & 0:01:22 $|$ 1:00:05 \\
		&1e-5 & 79(80) $|$ 4415  & 9.9e-7 $|$ \bf{2.5e-4} & 0:02:26 $|$ 1:00:05 \\
		\hline
	\end{tabular}
\end{table}

\subsection{Solution path generation via the AS strategy}
In this section, we present the performance of our proposed AS strategy with PPDNA for generating a solution path of the exclusive lasso model in high dimensional cases. Although our AS strategy can adopt other algorithms for solving the involved reduced problems, we choose PPDNA since it outperforms other algorithms in solving the model for a fixed $\lambda$ as seen in the previous subsection. We take $h(y):=\sum_{i=1}^m (y_i-b_i)^2/2$ in this subsection.

\subsubsection{Efficient initialization via correlation test}
Unlike the lasso model, there may not exist a regularization parameter value $\lambda_{\rm{max}}$ such that the exclusive lasso model admits a trivial optimal solution of all zeros. Thus, we must solve the exclusive lasso model with an initial value $\lambda_0$ even if we choose $\lambda_0$ to be large. Of course, we can directly solve the exclusive lasso model with $\lambda_0$. However, this may be expensive for high dimensional problems even with the powerful numerical algorithm PPDNA. Here, we propose a strategy for solving the exclusive lasso model with a large $\lambda_0$ by taking the advantage of the sparsity of the solution as well as the AS strategy . The idea is actually quite intuitive, if we can guess a reasonable initial active feature index set, then we can adopt the sieving property of the AS strategy to correct the active feature index set adaptively.

Now, we discuss our strategy to guess the initial active feature index set $I^0(\lambda_0)$, which is similar to the idea of surely independent screening rule in \cite{fan2008sure}. First, we set the number of initial active features to be $\lceil \sqrt{n} \rceil$. Then we choose the initial active features based on the correlation test between each feature vector $a_i$ and the response vector $b$. That is, we compute $s_i := \frac{|\langle a_i, b \rangle|}{\|a_i\|\|b\|}$ for $i=1,\cdots,n$, and choose the initial guess of $I^0(\lambda_0)$ as
\begin{align*}
I^0(\lambda_0)=\left\{i\in \{1,\cdots,n\}: s_i\mbox{ is among the first } \lceil \sqrt{n} \rceil \mbox{ largest values in} \; s_1,\ldots,s_n \right\}.
\end{align*}
Then we can follow the procedure in {\bf Step 3} of the AS strategy to correct $I^0(\lambda_0)$ adaptively and compute an approximate optimal solution of the exclusive lasso model with $\lambda_0$.

To demonstrate the effectiveness of this strategy, we compare the running time of the PPDNA for solving the exclusive lasso model with a given $\lambda_0$ with and without guessing the initial active feature index set in Table \ref{tab: initialization_screening}. We can see that the proposed strategy substantially improves the efficiency of computing the solution of the exclusive lasso model with a given $\lambda_0$. In the subsequent experiments, we adopt this technique to initialize the AS strategy with the PPDNA.

\begin{table}[H]\fontsize{8pt}{11pt}\selectfont
	\setlength{\abovecaptionskip}{0pt}
	\setlength{\belowcaptionskip}{0pt}
	\caption{Time comparison of the PPDNA with and without the initial active feature guess}\label{tab: initialization_screening}
	\renewcommand\arraystretch{1.2}
	\centering
	\begin{tabular}{|c|c|c|c|c|}
		\hline
	    &	&  & $\eta_{\rm KKT}$ & time\\
		\hline
		Data $(m,l,p)$  & $\lambda_0$  & nnz$(x)$ & PPDNA with initial guess $|$ PPDNA & PPDNA with initial guess $|$ PPDNA \\
		\hline
		\multirow{2}*{\tabincell{c}{$(500, 20, 20000)$ } }
		& 10 & 29 & 1.52e-7 $|$ 7.28e-7 & 0:00:02 $|$  0:00:18 \\
		& 1 & 114 & 1.33e-7 $|$ 4.41e-7 & 0:00:02 $|$ 0:00:21  \\
		\hline
		\multirow{2}*{\tabincell{c}{$(1000, 20, 40000)$ } }
		& 10 & 33 & 2.80e-7 $|$ 4.03e-7 & 0:00:02 $|$ 0:00:21 \\
		& 1 & 145 & 6.94e-8 $|$ 3.04e-7 & 0:00:03 $|$ 0:01:25 \\
		\hline
		\multirow{2}*{\tabincell{c}{$(1500, 20, 60000)$ } }
		& 10 & 32 & 1.83e-7 $|$ 5.86e-7 & 0:00:02 $|$ 0:02:50  \\
		& 1 & 105 & 4.75e-8 $|$ 2.25e-7 & 0:00:05 $|$ 0:03:28 \\
		\hline
	\end{tabular}
\end{table}

\subsubsection{Solution path generation}
Now, we demonstrate the efficiency of our AS strategy with the PPDNA for generating solution paths of exclusive lasso models on synthetic datasets. We generate the random data in the same manner as described in section \ref{sec: numerical PPDNA} and test our algorithm on high dimensional datasets with the number of feature $n$ up to $1,200,000$. We choose the range of $\lambda$ from $1$ to $10^{-4}$ with $20$ equally divided grid points on the $\log_{10}$ scale. We summarize our numerical results in Figure \ref{fig: screening_synthetic}.

\begin{figure}[H]
	\begin{center}
		\hspace{-0.5cm}
		\includegraphics[width = 0.35\columnwidth]{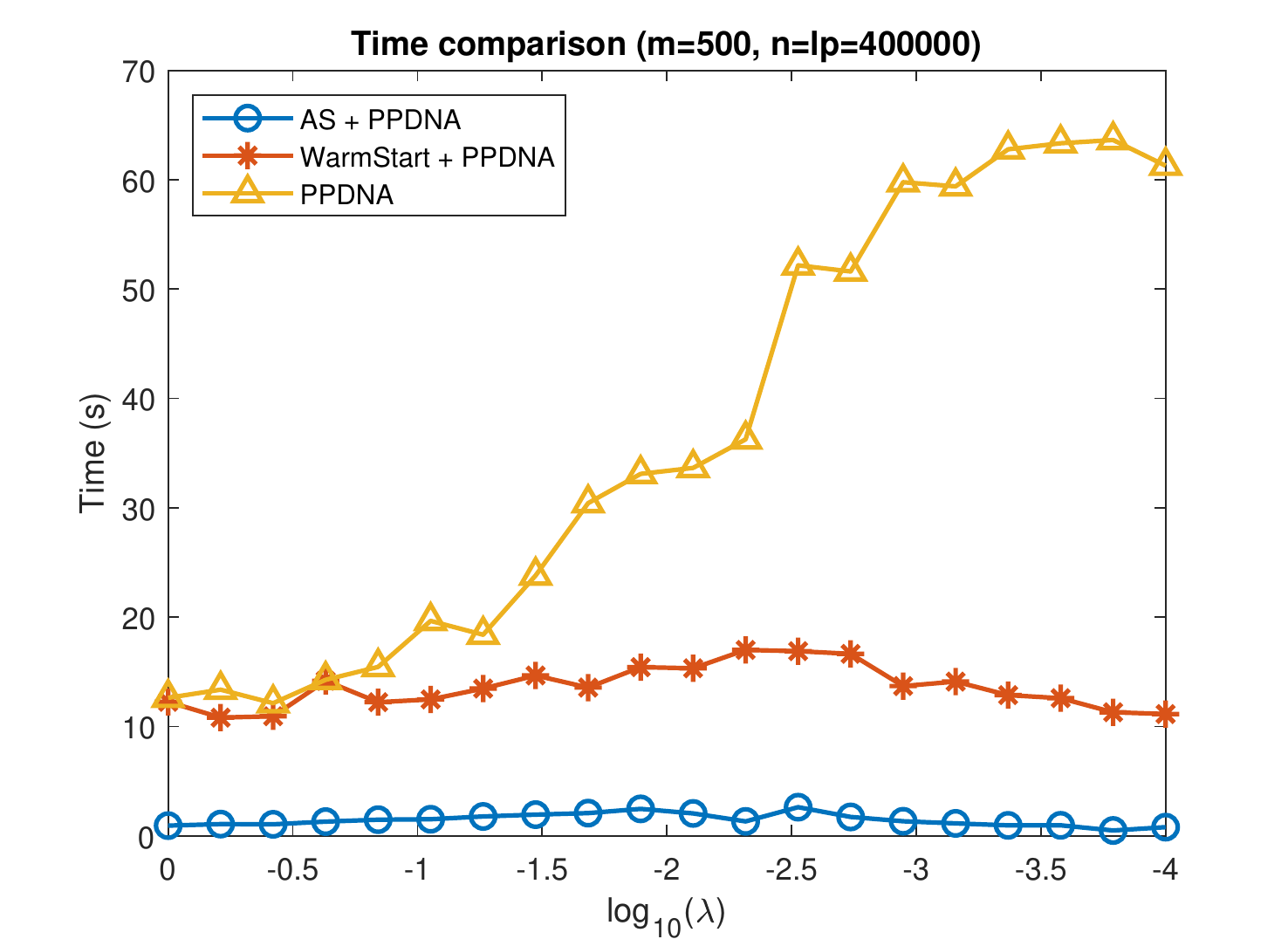}
		\hspace{-0.5cm}
		\includegraphics[width = 0.35\columnwidth]{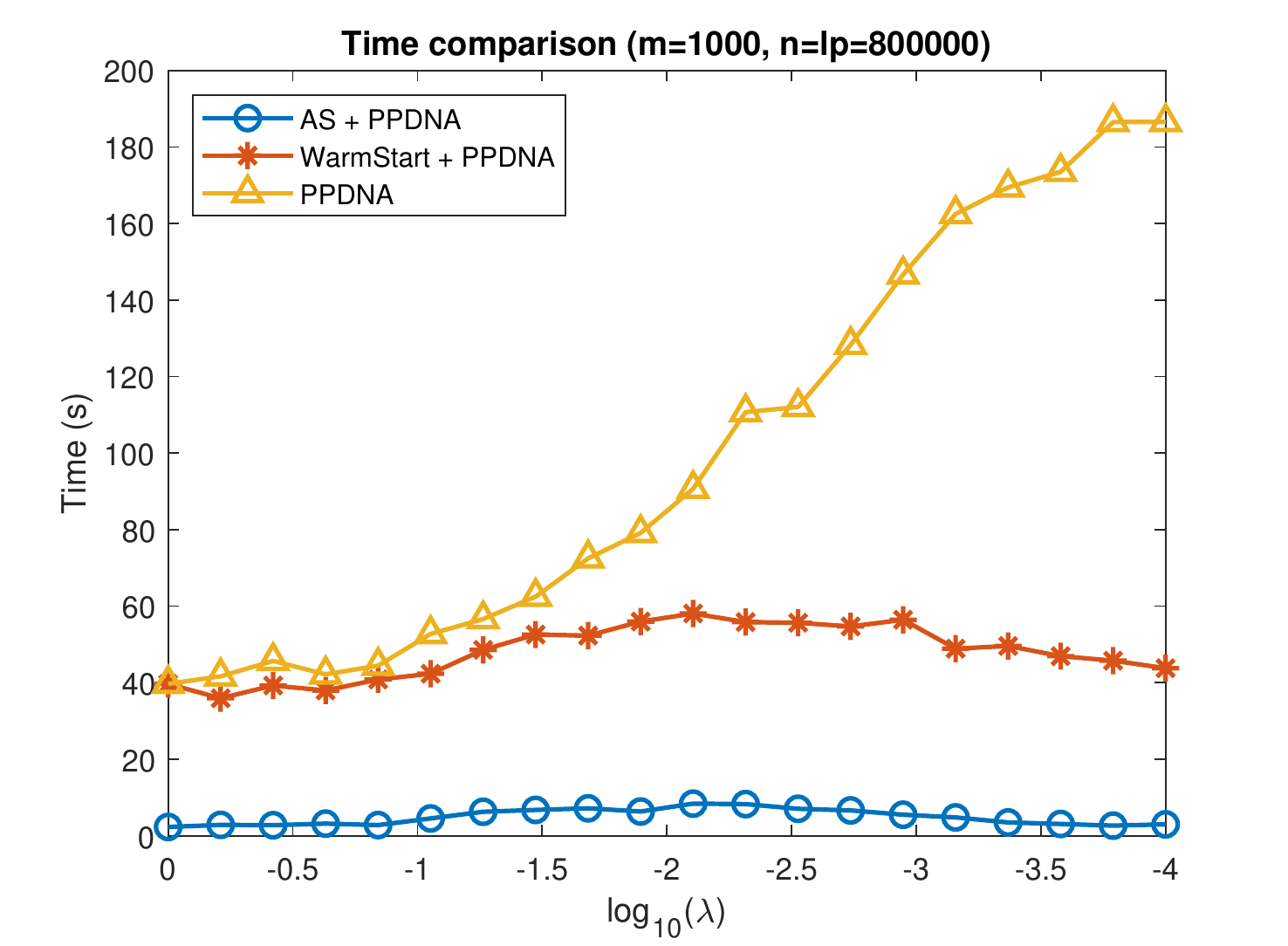}
		\hspace{-0.5cm}
		\includegraphics[width = 0.35\columnwidth]{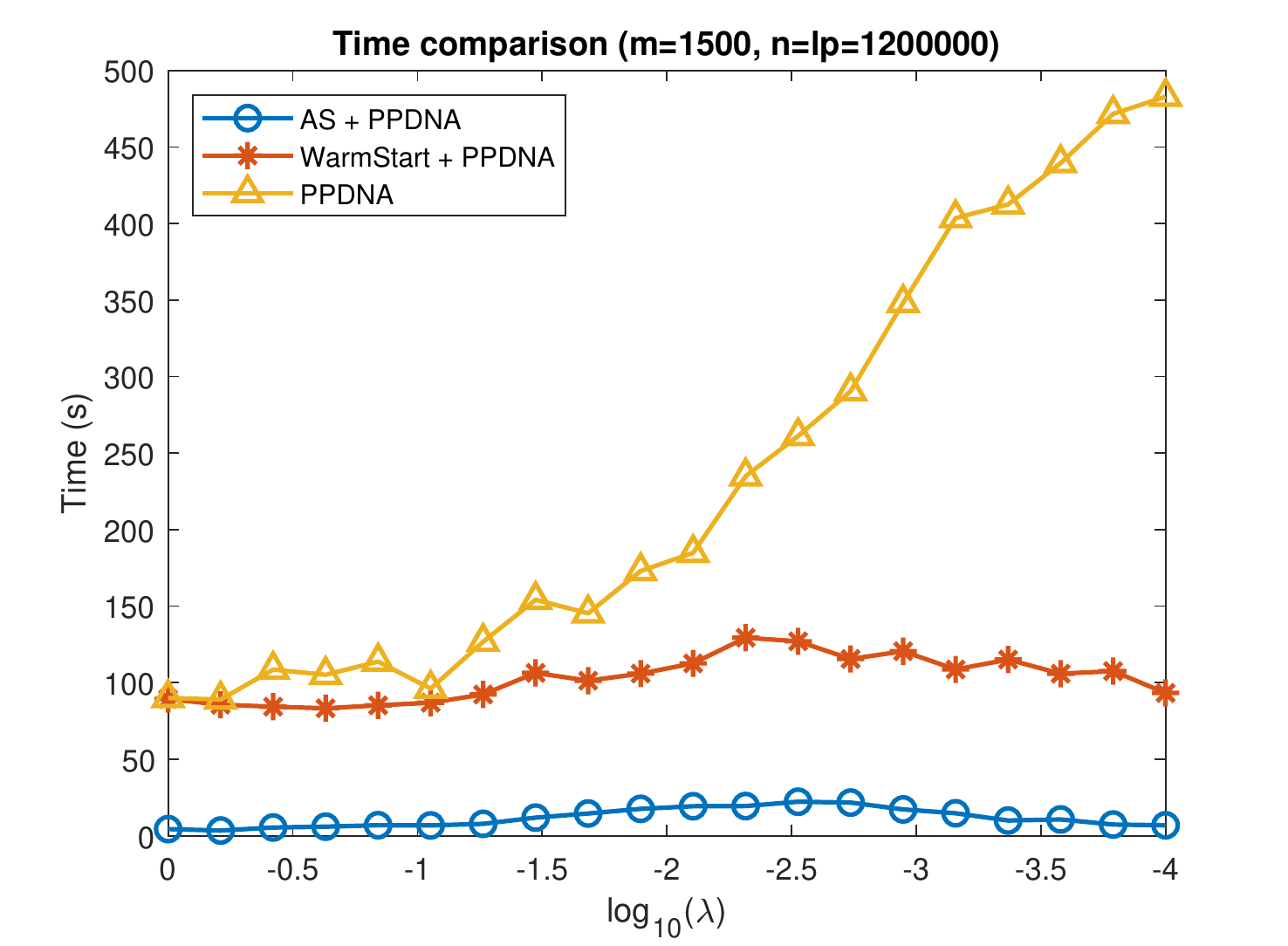}\\
		\hspace{-0.5cm}
		\includegraphics[width = 0.35\columnwidth]{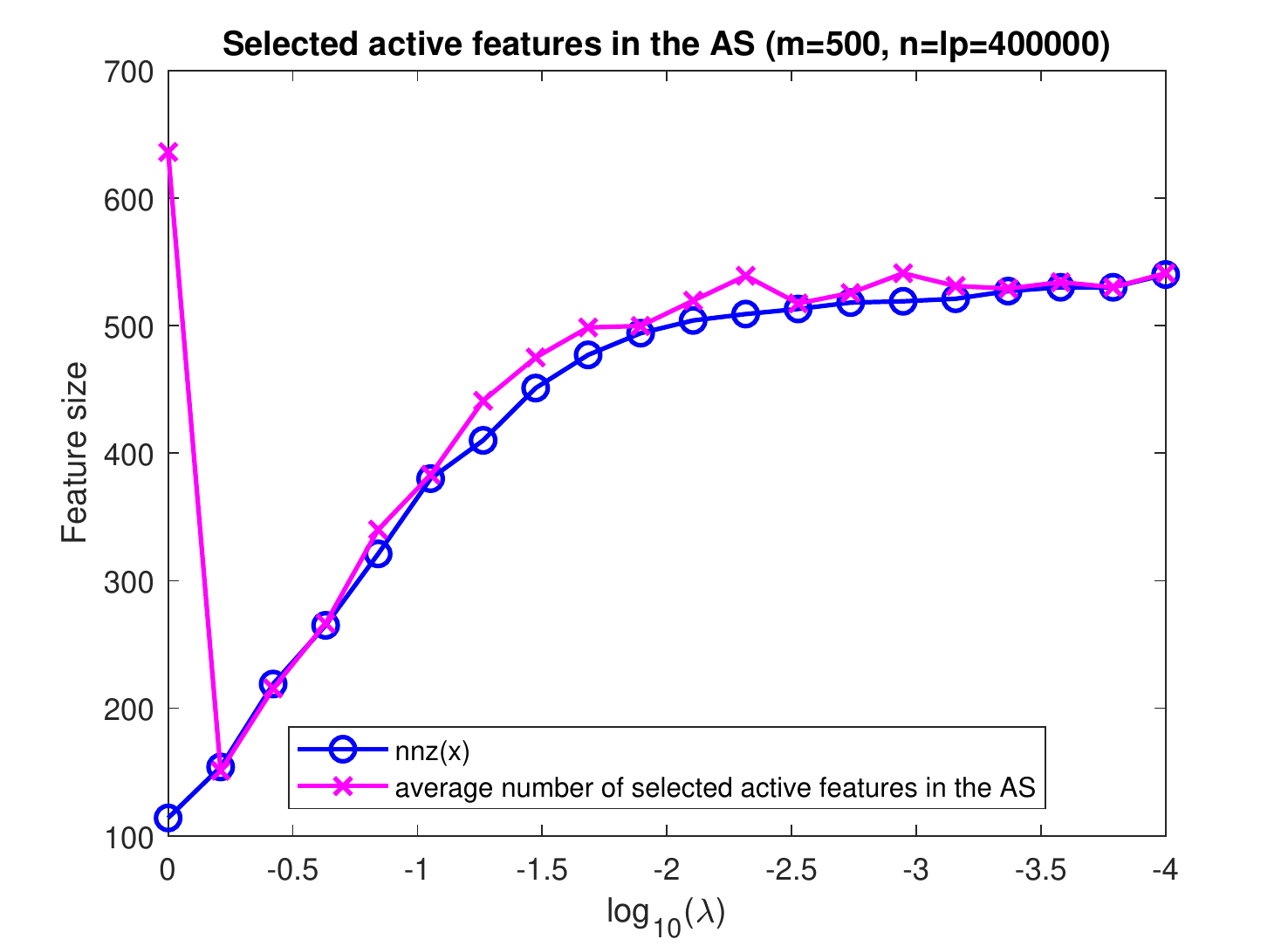}
		\hspace{-0.5cm}
		\includegraphics[width = 0.35\columnwidth]{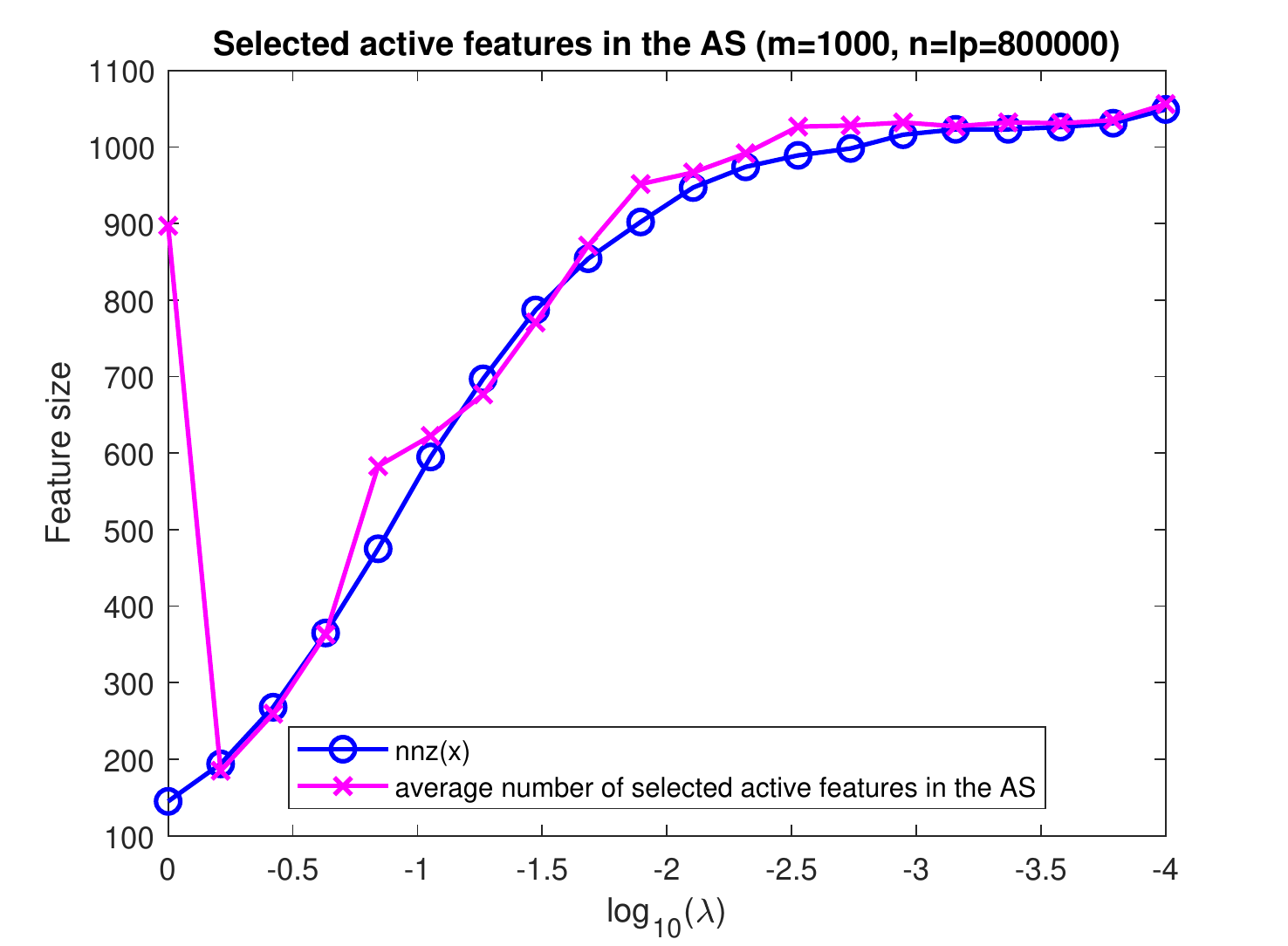}
		\hspace{-0.5cm}
		\includegraphics[width = 0.35\columnwidth]{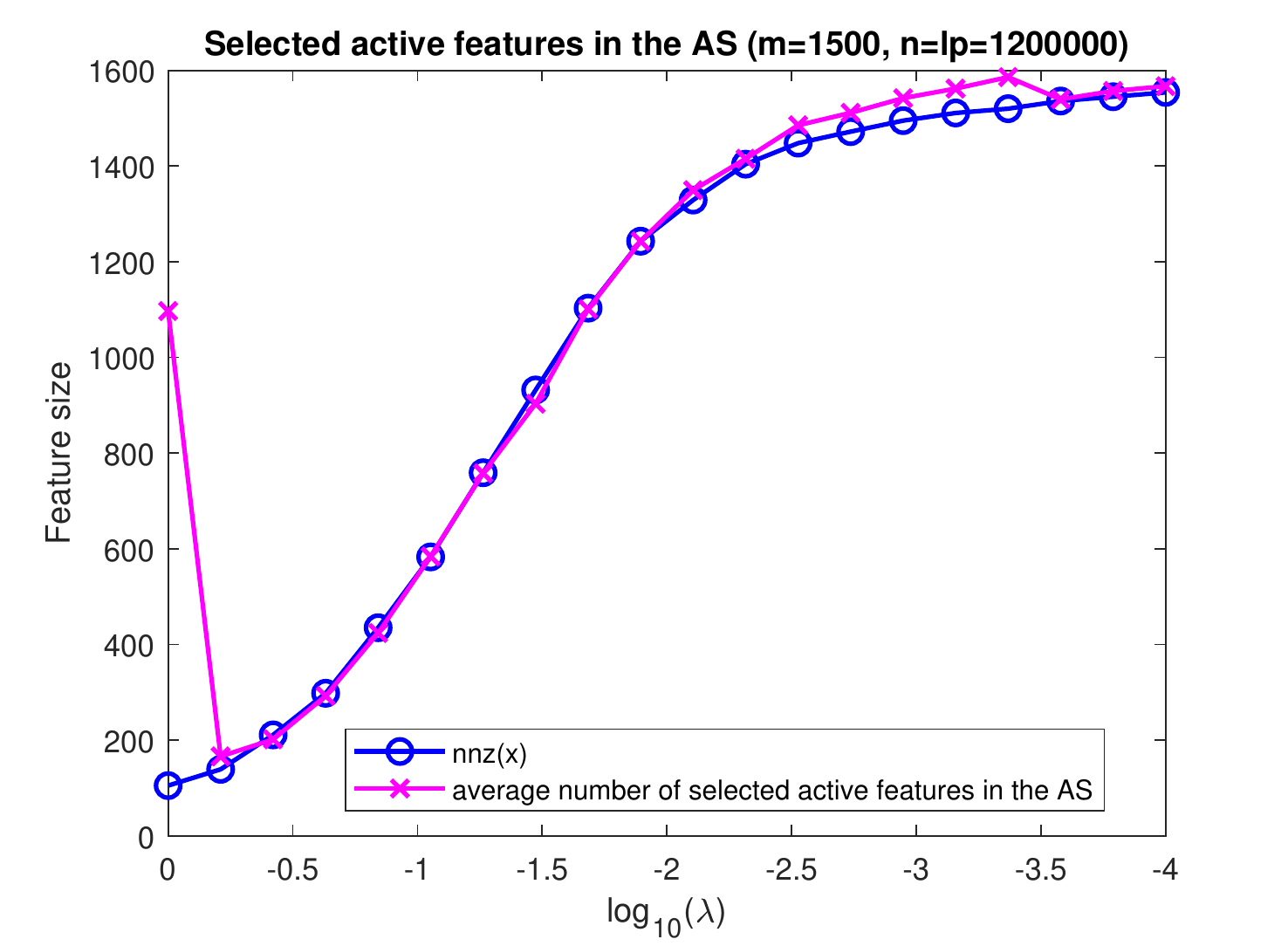}
		\setlength{\abovecaptionskip}{-10pt}
		\setlength{\belowcaptionskip}{-15pt}
		\caption{Numerical performance of the AS strategy with the PPDNA on synthetic datasets.}
		\label{fig: screening_synthetic}
	\end{center}
\end{figure}

In the first row of Figure \ref{fig: screening_synthetic}, we show the time comparison among three possible ways to generate solution paths of exclusive lasso models with different problem sizes. The results clearly show that our AS strategy can significantly improve the efficiency of the path generation. Compared to the obvious approach of warm-starting the PPDNA by using the optimal solution of the current problem as the initial iterate for solving the next problem with a new smaller parameter, our AS strategy can be at least 10 times more efficient for large instances. To uncover the reason behind, we plot the average problem size of the reduced problems in the AS strategy versus the number of nonzero entries of the optimal solution $x^*(\lambda)$ in the second row. It is surprising that the average reduced subproblem size in the AS strategy can nearly match the actual number of nonzeros in the optimal solution, except for the first problem with the parameter value $\lambda_0$.

In order to further demonstrate the power of the AS strategy, we show the time comparison of the AS+PPDNA with the oracle PPDNA in the left panel of Figure \ref{fig: screening_synthetic_ideal}. Here the oracle PPDNA means that we apply the PPDNA to the reduced problem based on the true active features, which are of course  impossible to know in practice, otherwise there is no need to do any feature selection. We could see that with the AS strategy, the running time is at most about 3 times longer than the oracle PPDNA, which shows that our AS is near-optimal, at least for the exclusive lasso models. In addition, the right panel of Figure \ref{fig: screening_synthetic_ideal} shows that we only need two or three rounds of sieving in the AS strategy to solve the problem for each $\lambda$, which shows that our sieving technique is quite effective in practice.

\begin{figure}[H]
	\begin{center}
		\includegraphics[width = 0.4\columnwidth]{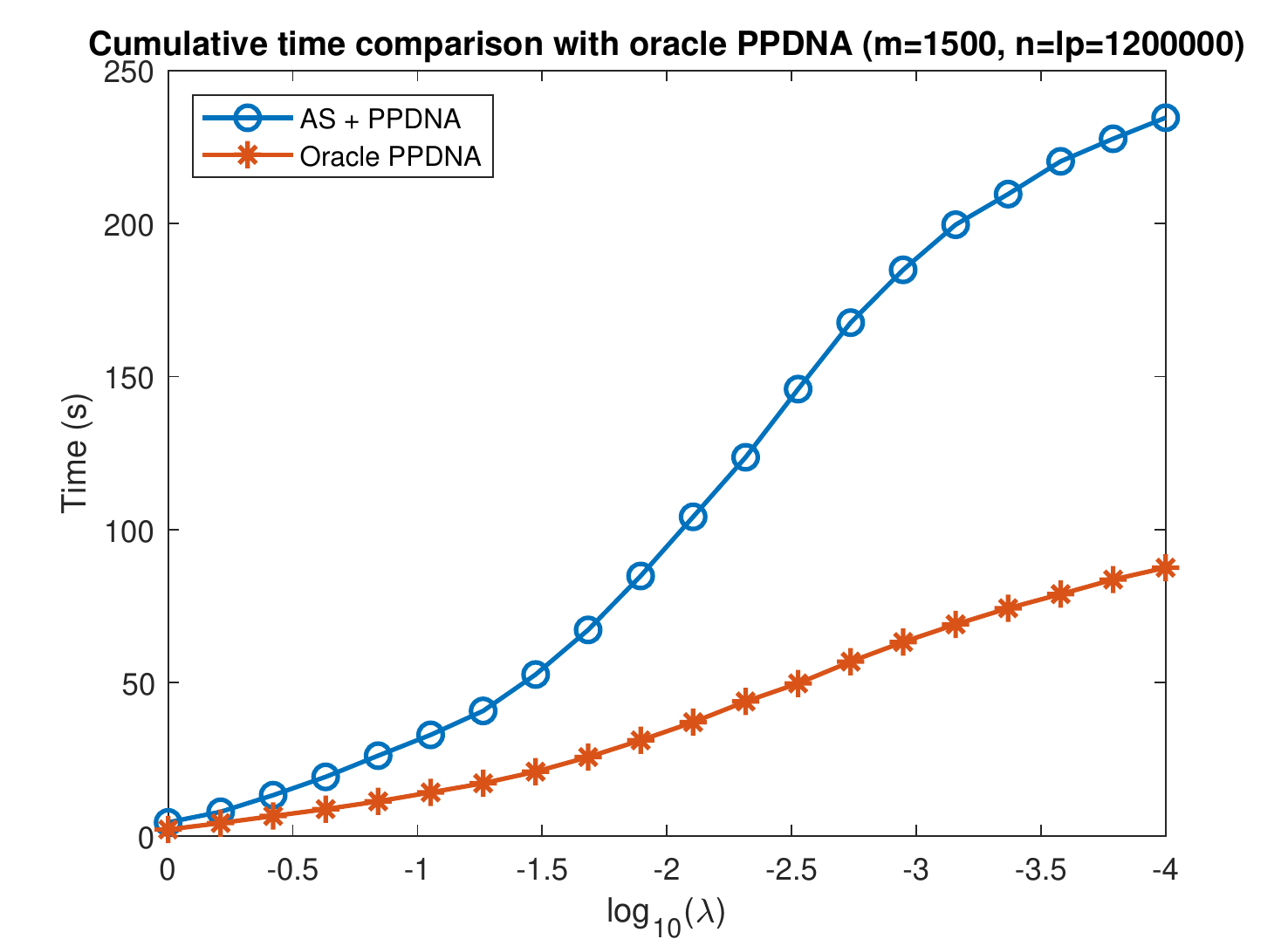}
		\includegraphics[width = 0.4\columnwidth]{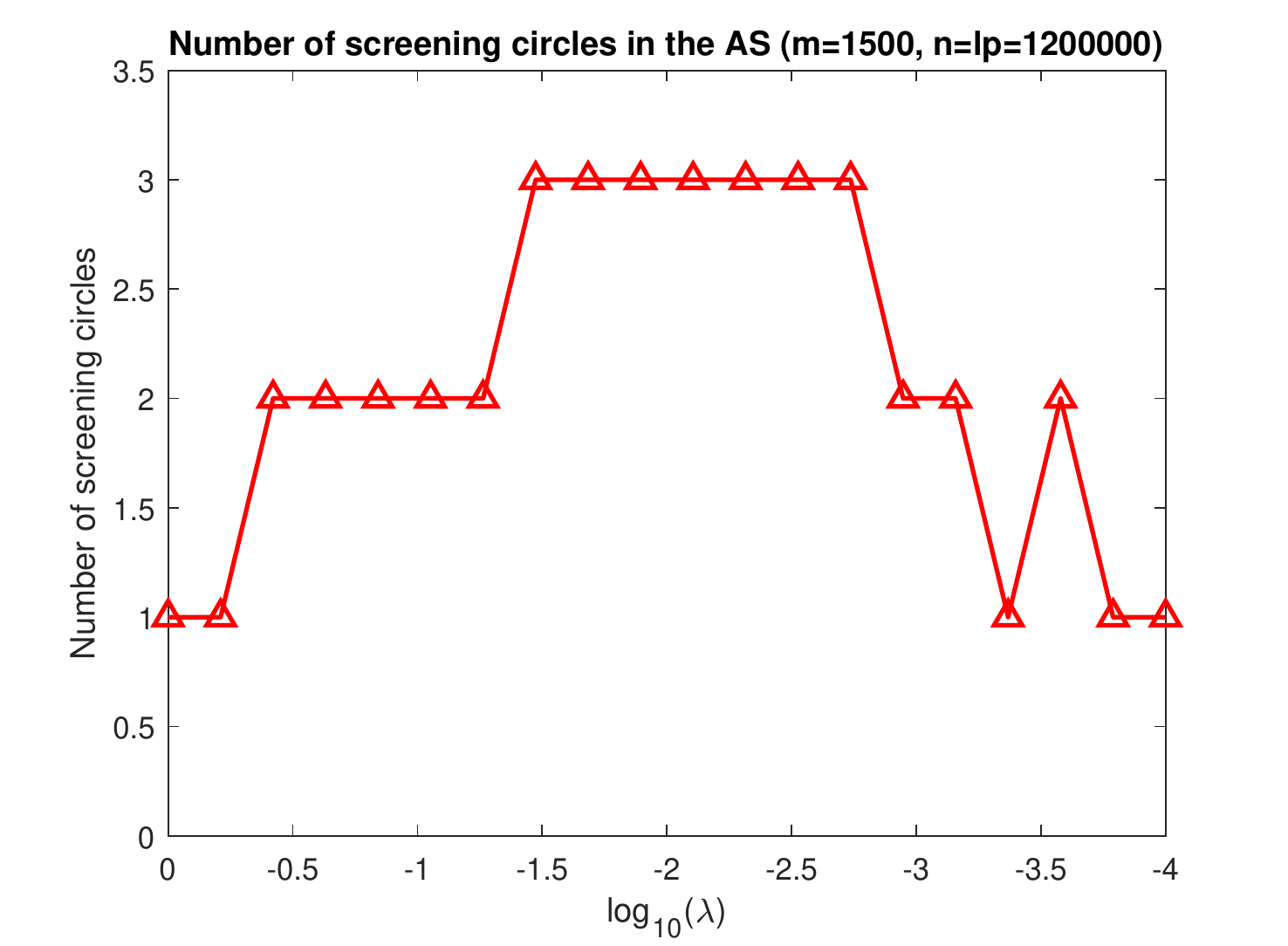}
		\setlength{\abovecaptionskip}{-2pt}
		\setlength{\belowcaptionskip}{-15pt}
		\caption{Performance profile for the case when $m=1500$, $n=1,200,000$.}
		\label{fig: screening_synthetic_ideal}
	\end{center}
\end{figure}

\subsection{Real applications}
In this subsection, we apply the exclusive lasso model to several real applications in finance and multi-class classifications.

\subsubsection{Index exchange-traded fund (index ETF)}
Consider the portfolio selection problem where a fund manager wants to select a small subset of stocks (to minimize transaction costs and business analyzes) to track the S\&P 500 index. In order to diversify the risks, the portfolio is required to span across all sectors. Such an application naturally leads us to consider the exclusive lasso model.

In our experiments, we download all the stock price data in the US market between 2018-01-01 and 2018-12-31 (251 trading days) from Yahoo finance\footnote{https://finance.yahoo.com}, and drop the stocks with more than 10\% of their price data being missed. We get 3074 stocks in our stock universe and handle the missing data via the common practice of forward interpolation. Then we denote the historical daily return matrix as $R \in \mathbb{R}^{250 \times 3074}$, and the daily return of the S\&P 500 index as $y \in \mathbb{R}^{250}$. Since there are 12 sectors in the US market (e.g., finance, healthcare), we have a natural group partition for our stock universe as $\mathcal{G} = \{g_1, g_2, \dots, g_{12}\}$, where $g_i$ is the index set for stocks in the $i$-th sector.

To test the performance of the exclusive lasso model in index tracking, we use the rolling window method to test the in-sample and out-of-sample performance of the model. We use the historical data in the last 90 trading days to estimate a portfolio vector via the model for the future 10 days. In each experiment, we scale the feature matrix $A$ and the response vector $b$ by $1/\sqrt{\|A\|_F}$, and select the parameter $\lambda$ in the range of $10^{-3}$ to $10^{-5}$ with $20$ equally divided grid points on the $\log_{10}$ scale, using 9-fold cross-validation. The in-sample and out-of-sample performance of the exclusive lasso model, the lasso model and the group lasso model is shown in Figure \ref{fig: partial-index-tracking}. The out-of-sample performance of the exclusive lasso model is visibly better than those corresponding to the lasso and group lasso models.

\begin{figure}[H]
	\begin{center}
		\includegraphics[width = 0.4\columnwidth]{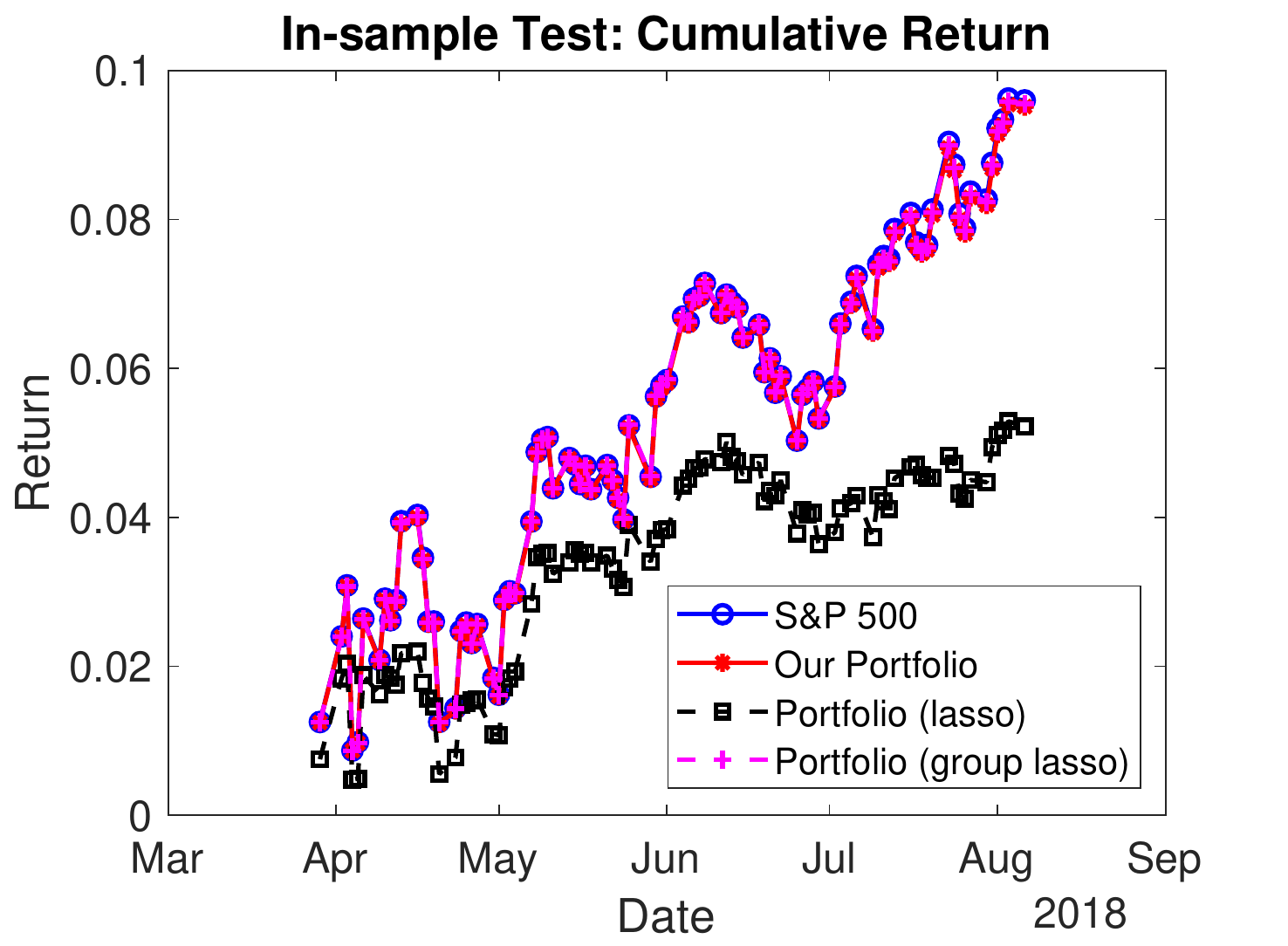}
		\quad
		\includegraphics[width = 0.4\columnwidth]{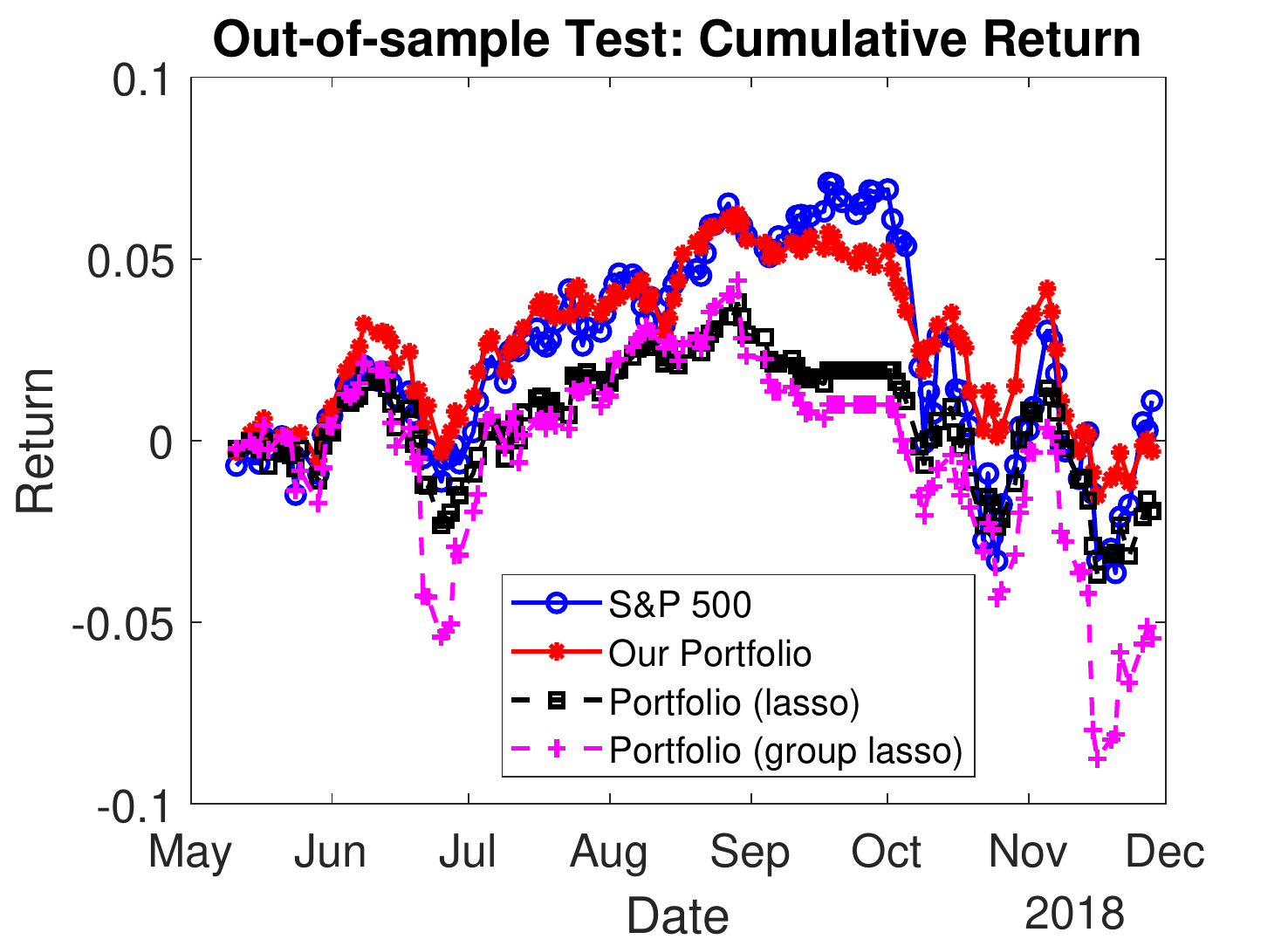}
		\setlength{\abovecaptionskip}{-2pt}
		\setlength{\belowcaptionskip}{-15pt}
		\caption{In-sample and out-of-sample performance of the exclusive lasso, the group lasso and the lasso model for index tracking of S\&P 500.}
		\label{fig: partial-index-tracking}
	\end{center}
\end{figure}

We plot the percentage of stocks from each sector in the portfolio obtained from the three tested models in Figure \ref{fig: pie-fig}. The result shows that the exclusive lasso model can select stocks from all the 12 sectors, but the lasso model selects stocks only from 10 sectors and the group lasso model selects stocks only from 7 sectors in the universe.

\begin{figure}[h]
	\addtocounter{figure}{1}
	\vspace{-0.4cm}
	\hspace{-1cm}
	\subfigure[the exclusive lasso model]{
		\label{fig_eg1}
		\includegraphics[width = 0.55\columnwidth]{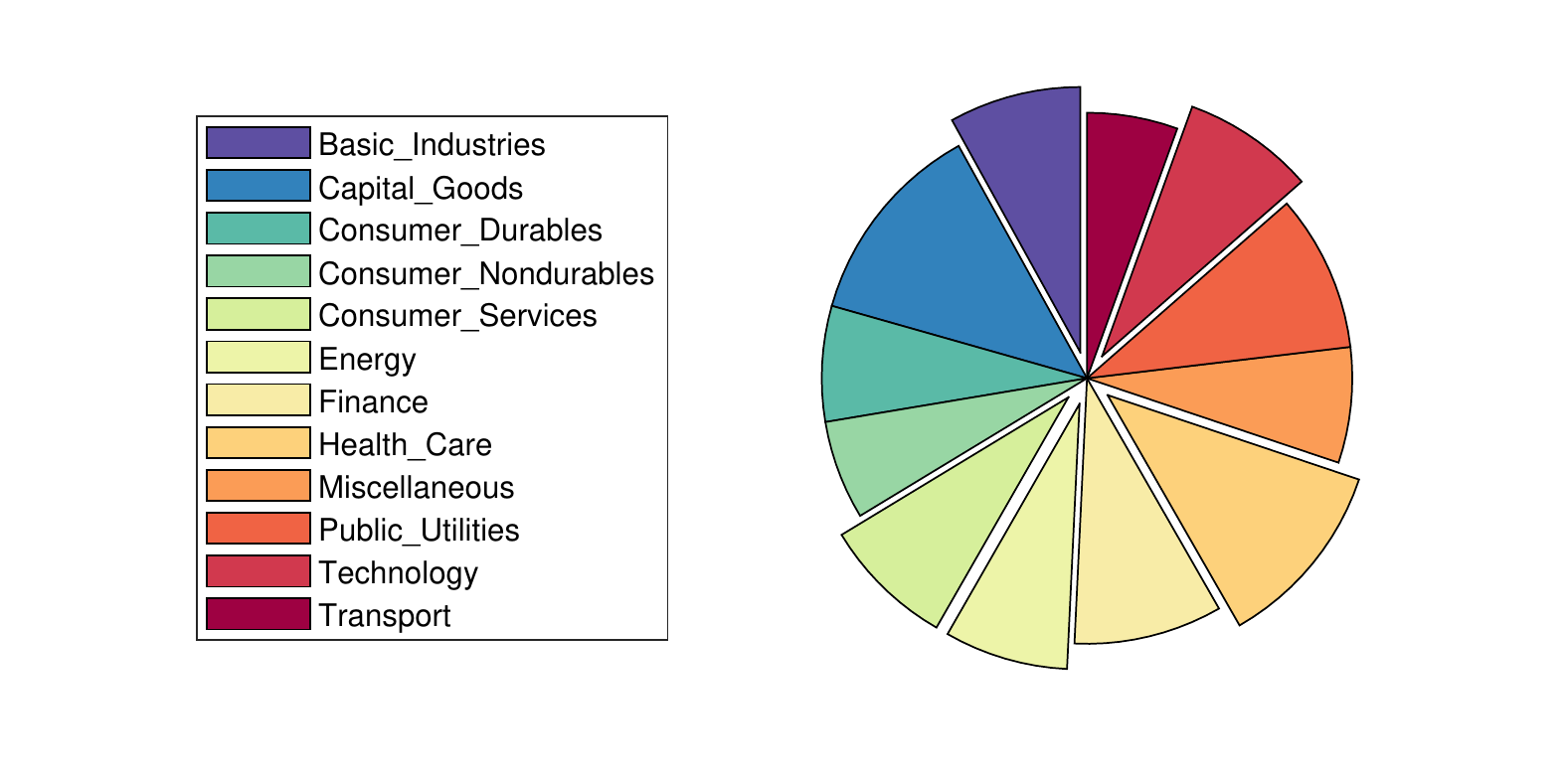}}
	\hspace{-1.2cm}
	\subfigure[the lasso model]{
		\label{fig_eg2}
		\includegraphics[width = 0.28\columnwidth]{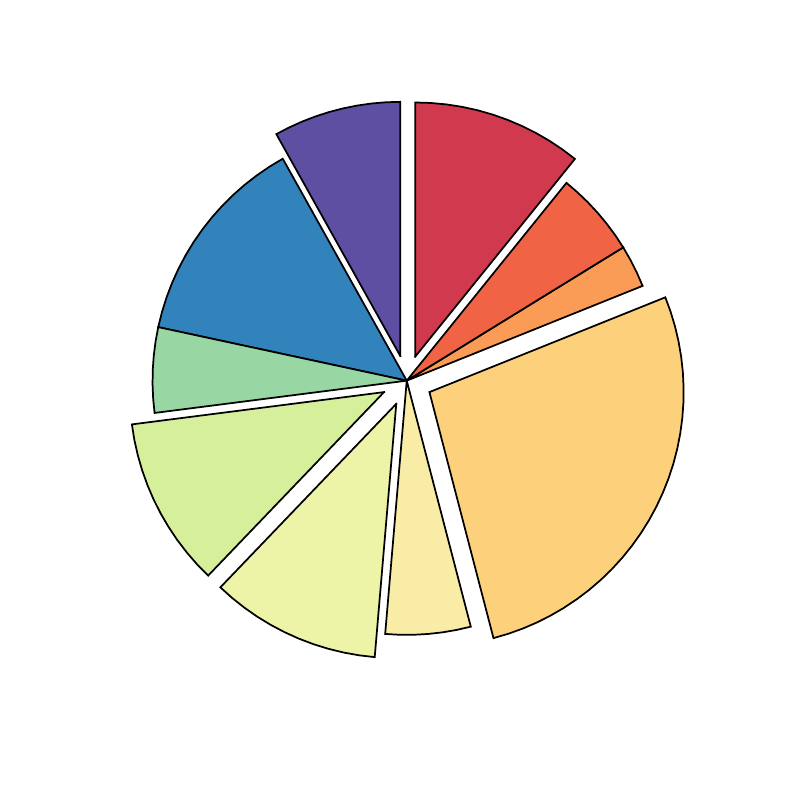}}
	\hspace{-0.8cm}
	\subfigure[the group lasso model]{
		\label{fig_eg3}
		\includegraphics[width = 0.28\columnwidth]{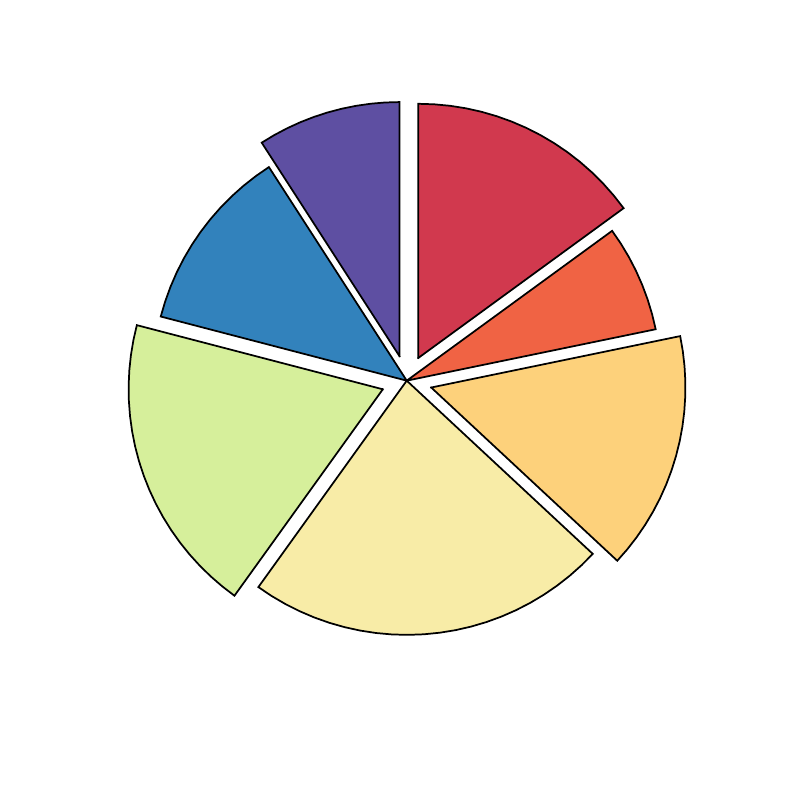}}
	\setlength{\abovecaptionskip}{0 pt}
	\setlength{\belowcaptionskip}{-15pt}
	\caption{Percentage of selected stocks by sectors.}
	\label{fig: pie-fig}
\end{figure}

\subsubsection{Image and text classifications}
We test the exclusive lasso model on multi-class classifications. For a given $k$-class classification dataset $\{(a_i, b_i)\}_{i=1}^N$, where $a_i \in \mathbb{R}^{p}$ is the feature vector and $b_i \in \mathbb{R}^k$ is the one-hot representation of the label, the exclusive lasso regression model for this problem \cite{zhou2010exclusive,kong2014exclusive,campbell2017within} is given by:
\begin{align}
\min_{X\in \mathbb{R}^{p\times k}}\ \Big\{\frac{1}{2}\|AX-b\|_F^2+\lambda \sum_{j=1}^p\|X_{j,:}\|_1^2
\Big\},\tag{M1}\label{eq: classification_M1}
\end{align}
where $A = [a_1, a_2, \dots, a_N]^T \in \mathbb{R}^{N \times p}$ and $b = [b_1, b_2, \dots, b_N]^T \in \mathbb{R}^{N \times k}$. The key motivation for considering this model is  to capture the negative correlation among the classes. However, the exclusive lasso regularizer may not exclude uninformative features if we penalize $X$ row-wise since it prefers to select at least one representative from each feature group. This phenomenon has also been discussed in a recent paper \cite{ming2019robust}. In our experiments, we consider the following model instead:
\begin{align}
\min_{X\in \mathbb{R}^{p\times k}}\ \Big\{\frac{1}{2}\|AX-Y\|_F^2+\lambda \sum_{j=1}^k\|X_{:,j}\|_1^2
\Big\}.\tag{M2}\label{eq: classification_M2}
\end{align}
The motivation for considering model \eqref{eq: classification_M2} is that we can do class-wise feature selections, since the informative features for different classes are usually not identical. Also, uninformative features will  automatically be excluded by the nature of class-wise feature selections. In order to show that the new model we suggest is meaningful, we first compare the model performance on two popular real datasets: MNIST \cite{lecun1998gradient} and 20 Newsgroups\footnote{http://qwone.com/$\sim$jason/20Newsgroups/}. We summarize the details of the datasets in Table \ref{tab:real_datasets}. Note that, after vectorization, the target problem size is actually $kN \times kp$.

\begin{table}[H]\fontsize{8pt}{11pt}\selectfont
	\setlength{\abovecaptionskip}{0pt}
	\setlength{\belowcaptionskip}{0pt}
	\caption{Details of real datasets. }\label{tab:real_datasets}
	\renewcommand\arraystretch{1.2}
	\centering
	\begin{tabular}{|c|c|c|c|c|}
		\hline
		Dataset & Num. of classes $k$ & Num. of samples $N$ & Num. of features $p$ &  Target problem size $(m,n)=(kN,kp)$\\
		\hline
		MNIST & 10 & 60000 & 784 & (600000, 7840)\\
		\hline
		20 Newsgroups & 20 & 11314 & 26214 & (226280, 524280)\\
		\hline
	\end{tabular}
\end{table}

We train \eqref{eq: classification_M1} and \eqref{eq: classification_M2} independently on the two datasets. As a prior knowledge, there are a certain percentage of features which are uninformative for these two datasets (e.g., background pixels for the MNIST dataset and some uninformative words for the 20 Newsgroups dataset). Thus in each experiment, we set a lower bound for the value of $\lambda$ such that no more than 90\% features are selected by the model. As a result, for the MNIST dataset, we train \eqref{eq: classification_M1} with $\lambda$ in the range from $10$ to $0.1$ and \eqref{eq: classification_M2} with $\lambda$ from $10$ to $10^{-3}$ with grid search and cross-validation. Similarly, for the 20 Newsgroup dataset, we train \eqref{eq: classification_M1} with $\lambda$ from $1$ to $10^{-3}$ and \eqref{eq: classification_M2} with $\lambda$ from $1$ to $10^{-6}$. We summarize the results in Table \ref{tab: numerical_results_realdata} and Figure \ref{fig: real_model_compare}. We can observe that, the classification accuracy of the two models are comparable, but model \eqref{eq: classification_M2} obviously performs better in terms of feature selections.
\begin{table}[H]\fontsize{7pt}{10pt}\selectfont
	\setlength{\abovecaptionskip}{0pt}
	\setlength{\belowcaptionskip}{0pt}
	\caption{Model comparison on real datasets.}
	\label{tab: numerical_results_realdata}
	\renewcommand\arraystretch{1.2}
	\centering
	\begin{tabular}{|c|c|c|c|c|c|c|}
		\hline
		Dataset & Model & $\lambda^*$  & total selected unique features & ${\rm nnz}(X)$ & training accuracy  & testing accuracy \\
		\hline
		\multirow{2}*{\tabincell{c}{ MNIST } }
		& (M1) & 1.0e-1 & 717 & 1922 & 84.01\% & 84.64\%  \\
		& (M2) & 1.0e-3 & 449 & 1818 & 84.03\% & 84.79\%  \\
		\hline
		\multirow{2}*{\tabincell{c}{ 20 Newsgroups } }
		& (M1) & 1.0e-3 & 25714 & 27537 & 88.15\% & 77.46\%  \\
		& (M2) & 1.0e-6 & 2789 & 5942 & 91.70\% & 79.14\%  \\
		\hline
	\end{tabular}
\end{table}

\begin{figure}[H]
	\vspace{-0.3cm}
	\flushright
	\includegraphics[width = 1\columnwidth]{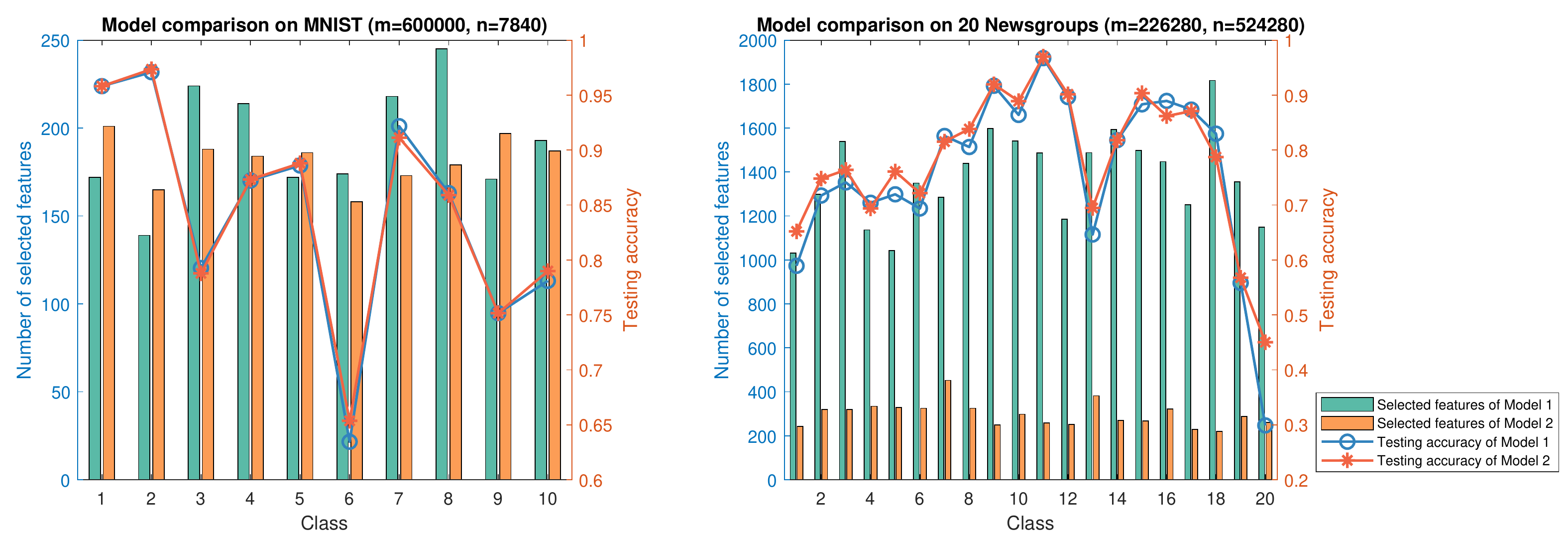}
	\setlength{\abovecaptionskip}{-15pt}
	\setlength{\belowcaptionskip}{-0pt}
	\caption{Model comparison between \eqref{eq: classification_M1} and \eqref{eq: classification_M2}. }
	\label{fig: real_model_compare}
\end{figure}

More importantly, as we can see in Table \ref{tab: numerical_results_realdata} and Figure \ref{fig: real_model_compare}, for the MNIST dataset, although the numbers of features in each class selected by two models are close, the total selected unique features of model \eqref{eq: classification_M2} is much less than that of model \eqref{eq: classification_M1}. This is because a group of important features which are selected by model \eqref{eq: classification_M2} are shared across different classes, which is consistent to our prior knowledge since almost all the targeted digits are located at the center of the images in the MNIST dataset. On the contrary, model \eqref{eq: classification_M1} selects 717 unique features out of the total 784 features, which means it selects a lot of uninformative features.

From now on, we focus on model \eqref{eq: classification_M2} and test the efficiency of our AS strategy with the PPDNA for solving the model with a sequence of hyper-parameters. For the two datasets, we generate solution paths for $\lambda$ over the range from 1 to $10^{-3}$ with 10 equally divided grid points on the $\log_{10}$ scale, and $10^{-3}$ to $10^{-6}$ with 10 equally divided grid points on the $\log_{10}$ scale, respectively. To fully demonstrate the power of AS+PPDNA approach for generating the solution paths, we compare its computation time with those of five other algorithms: AS+ADMM, PPDNA with warm-start, ADMM with warm-start, stand-alone PPDNA and stand-alone ADMM. We summarize the results in Figure \ref{fig: real_data}. We can observe that, AS+PPDNA is the best performer among all the six algorithms for generating solution paths of the exclusive lasso model. Note that PPDNA  beats ADMM by a large ratio. But AS+ADMM fares much better than ADMM, and its performance is closer to that of AS+PPDNA. This result also demonstrates the power of the AS strategy for solving large scale sparse optimization problems.

\begin{figure}[H]
	\begin{center}
		\includegraphics[width = 0.4\columnwidth]{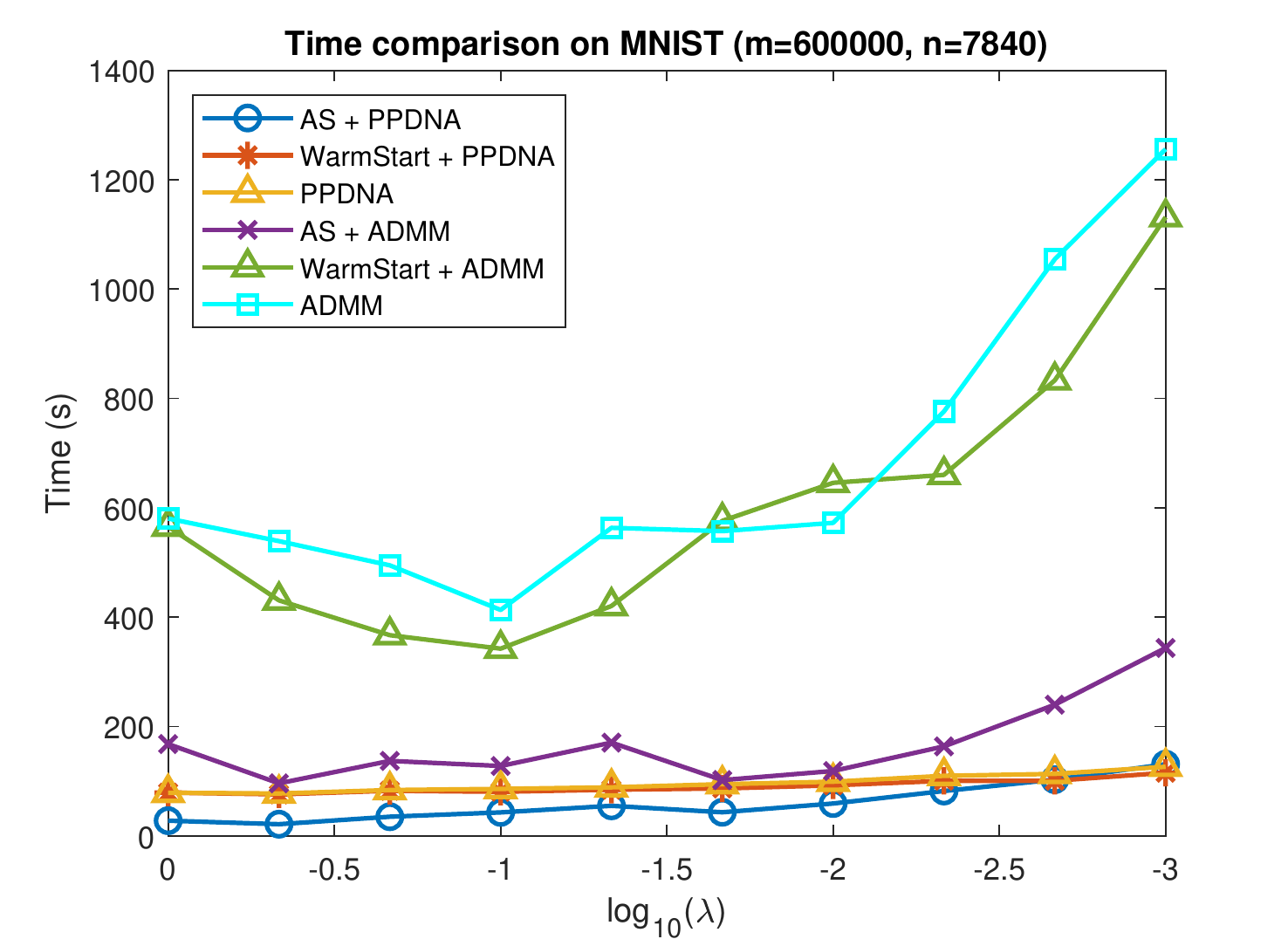}
		\quad
		\includegraphics[width = 0.4\columnwidth]{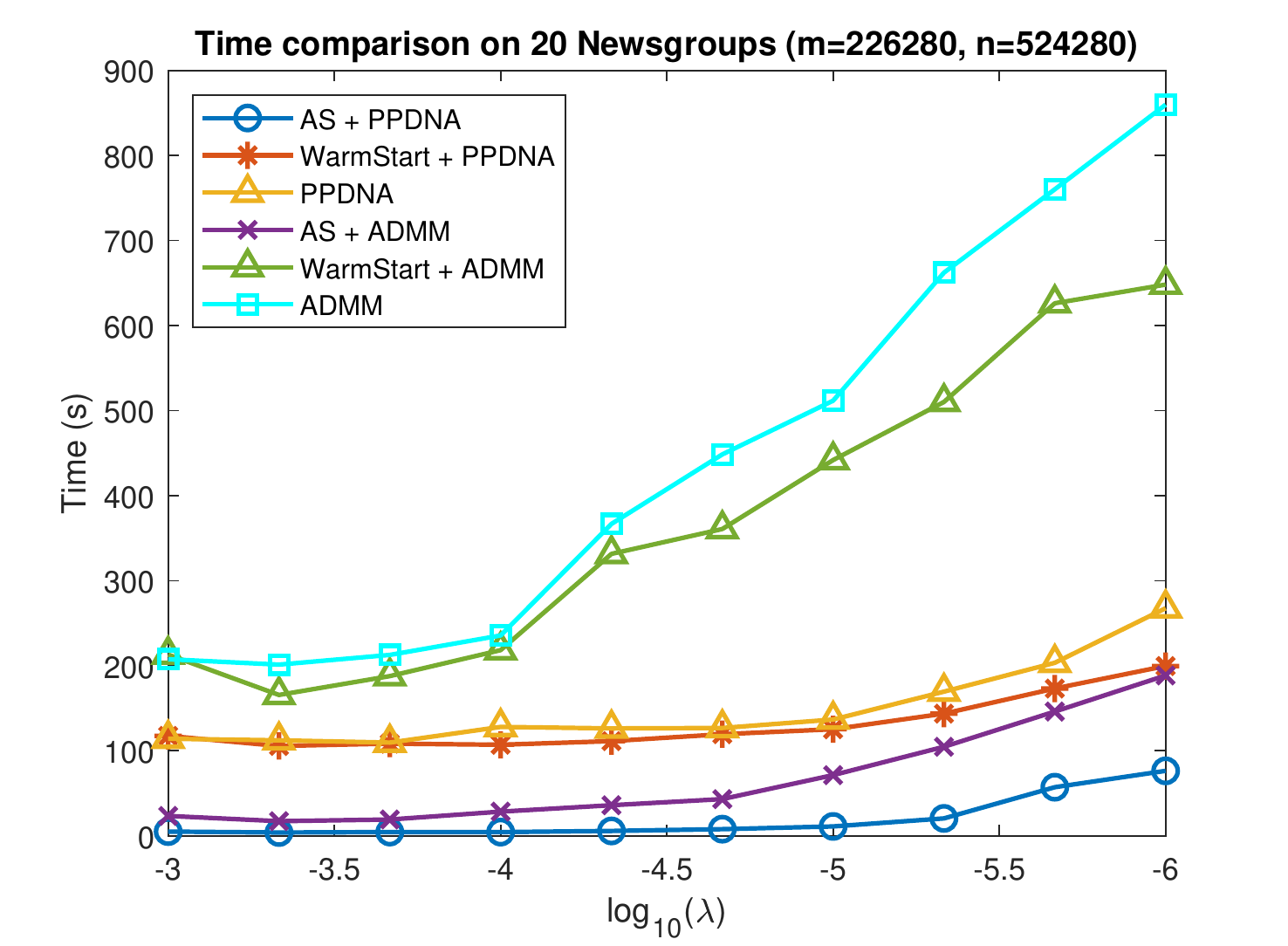}
		\setlength{\abovecaptionskip}{-2pt}
		\setlength{\belowcaptionskip}{-15pt}
		\caption{Time comparison on multi-class classifications.}
		\label{fig: real_data}
	\end{center}
\end{figure}

\section{Conclusion}
\label{sec:conclusion}

In this paper, we design an adaptive sieving strategy for generating solution paths of general machine learning models, including those with the exclusive lasso regularizer. In order to solve the reduced problems involved in the AS strategy for the exclusive lasso model, we design a highly efficient and scalable dual Newton method based proximal point algorithm, which is proved to converge superlinearly. As important ingredients, we systematically study the proximal mapping of the weighted exclusive lasso regularizer and its generalized Jacobian. Numerical experiments show that our AS strategy combined with the PPDNA is extremely efficient for generating solution paths of large-scale exclusive lasso models.

\appendix
\section*{Appendix}
\section{Numerical implementation of the SSN method}

In the SSN method presented in Algorithm \ref{alg:ssn}, the key step is to compute the Newton direction, in other words, to solve the linear system \eqref{eq: cg-system}. Here we 
present the numerical implementation details. In our implementation, we
explore the structured sparsity of the HS-Jacobian deeply which results in a highly efficient way to solve the linear system. 

Denote $\tilde{A}:=A {\cal P}^T$. Note that by the definition of 
the permutation matrix ${\cal P}$, $\tilde{A}$ could be obtained by permuting the columns in $A$ according to ${\cal P}$. Note that $\tilde{A}$ only needs to be computed once as a preprocessing step of the PPDNA algorithm since $\cal P$ is fully  determined by the fixed group information $\cal G$. Given $(\tilde{x},\tilde{u})\in\mathbb{R}^n\times \mathbb{R}^m$ and $\sigma,\tau>0$, we consider the following Newton system:
\begin{align}
\left( \frac{\sigma}{\tau}H  + \sigma \tilde{A} {\rm Diag}(M_1,\cdots,M_l)\tilde{A}^T\right) d = R,\label{eq: appendix_org}
\end{align}
where $R\in \mathbb{R}^m$ is a given vector, $H\in \nabla {\rm Prox}_{\sigma h/\tau}(A\tilde{x}+\frac{\sigma}{\tau}\tilde{u})$, $M_j\in \partial_{\rm HS} {\rm Prox}_{\sigma\lambda\|({\cal P} w)^{(j)}\circ\cdot\|_1^2}(({\cal P} \hat{x})^{(j)})$, $j=1,\cdots,l$, with $\hat{x}:=\tilde{x} + \sigma c - \sigma A^T\tilde{u}$. As shown in Proposition \ref{prop: jacobian_h}, $H$ is symmetric and positive definite. We denote the Cholesky decomposition of $H$ as $H=LL^T$, where $L$ is a nonsingular lower triangular matrix. Then we can reformulate the equation \eqref{eq: appendix_org} equivalently as
\begin{align*}
\left( \frac{\sigma}{\tau} I_m  + \sigma (L^{-1}\tilde{A})  {\rm Diag}(M_1,\cdots,M_l) (L^{-1}\tilde{A})^T\right) (L^T d) = L^{-1}R.
\end{align*}
Note that when we consider the linear regression or the logistic regression problems, the matrix $H$ is in fact a diagonal matrix, which means that we can compute $L$ and $L^{-1}$ with very low computational cost. For convenience, we write the linear system in a compact form as
\begin{align}
\left( I_m + \tau \hat{A} {\cal M} \hat{A}^T\right) \hat{d} = \hat{R},\label{eq: appendix_newton}
\end{align}
where $\hat{A}:= L^{-1}\tilde{A} \in \mathbb{R}^{m\times n}$, ${\cal M}:={\rm Diag}(M_1,\cdots,M_l)\in \mathbb{R}^{n\times n}$, $\hat{d} := L^T d \in \mathbb{R}^m$ and $\hat{R} := \frac{\tau}{\sigma}L^{-1}R \in \mathbb{R}^m$. Since $L^T$ is an upper triangular matrix, we can recover $d$ from $\hat{d}$ with the cost of $O(m^2)$. In the case of linear regression or the logistic regression problems, the cost of recovering $d$ from $\hat{d}$ is actually $O(m)$. Thus, we only need to focus on solving the linear system \eqref{eq: appendix_newton} for $\hat{d}$.

Based on the discussions in Proposition \ref{compute_M}, for each $j \in \{1,\cdots,l\}$, we could choose $M_j\in \partial_{\rm HS} {\rm Prox}_{\sigma\lambda\|({\cal P} w)^{(j)}\circ\cdot\|_1^2}(({\cal P} \hat{x})^{(j)})$ such that it has the following form:
\begin{align*}
M_j = {\rm Diag}(\xi_j)-\frac{2\sigma\lambda}{1+2\sigma\lambda (\tilde{w}_j^T\tilde{w}_j)} \tilde{w}_j\tilde{w}_j^T,
\end{align*}
where $\xi_j \in \mathbb{R}^{n_j-n_{j-1}}$ is a $0$-$1$ vector defined as $(\xi_j)_i = 0$ if $i\in I(|({\cal P} \hat{x})^{(j)}|)$, $(\xi_j)_i = 1$ otherwise, and $\tilde{w}_j=({\rm sign}(({\cal P} \hat{x})^{(j)})\circ \xi_j)\circ({\cal P} w)^{(j)}$, where $I(\cdot)$ is defined in \eqref{eq:Ia}.

We know that the costs of directly computing $\hat{A} {\cal M} \hat{A}^T$ and $\hat{A} {\cal M} \hat{A}^T\bar{d}$ for a given vector $\bar{d}\in\mathbb{R}^m$ are $O(m^2n)$ and $O(mn)$, respectively. This is computationally expensive when $m$ and $n$ are large. Next we will carefully explore the second-order sparsity of the underlying Jacobian which will substantially reduce the computational cost for solving the linear system \eqref{eq: appendix_newton}.

For each $j \in \{1,\cdots,l\}$, by taking advantage of the $0$-$1$ structure of $\xi_j$ and the definition of $\tilde{w}_j$, we have
\begin{align*}
M_j = {\rm Diag}(\xi_j)\left({\rm Diag}(\xi_j)-\frac{2\sigma\lambda}{1+2\sigma\lambda (\tilde{w}_j^T\tilde{w}_j)} \tilde{w}_j\tilde{w}_j^T\right){\rm Diag}(\xi_j)={\rm Diag}(\xi_j)M_j{\rm Diag}(\xi_j).
\end{align*}
Define $K_j:=\{k\mid (\xi_j)_k=1,k=1,\cdots,n_j-n_{j-1}\}$, $\xi:=[\xi_1;\cdots;\xi_l]\in \mathbb{R}^n$ and ${\cal K}:=\{k\mid (\xi)_k=1,k=1,\cdots,n\}$. It holds that
\begin{align}
\hat{A} {\cal M} \hat{A}^T = \hat{A}{\rm Diag}(\xi){\rm Diag}(M_1,\cdots,M_l){\rm Diag}(\xi) \hat{A}^T
=\hat{A}_{\cal K}{\rm Diag}(\hat{M}_1,\cdots,\hat{M}_l) \hat{A}_{\cal K}^T,\label{eq: appendix_AMAT}
\end{align}
where $\hat{A}_{\cal K}\in \mathbb{R}^{m\times |{\cal K}|}$ is the matrix consisting of the columns of $\hat{A}$ indexed by ${\cal K}$, and for each $j \in \{1,\cdots,l\}$, $\hat{M}_j\in \mathbb{R}^{| K_j| \times |K_j|}$ is defined as
\begin{align*}
\hat{M}_j = I_{|K_j|}-c_jv_jv_j^T,
\end{align*}
with $v_j:=(\tilde{w}_j)_{K_j}$, $c_j:=\frac{2\sigma\lambda}{1+2\sigma\lambda (\tilde{w}_j^T\tilde{w}_j)}$.

From the equation \eqref{eq: appendix_AMAT}, we can see that the costs of computing $\hat{A} {\cal M} \hat{A}^T$ and $\hat{A} {\cal M} \hat{A}^T \bar{d}$ for a given vector $\bar{d}\in \mathbb{R}^m$ reduce to $O(m^2|{\cal K}|)$ and $O(m|{\cal K}|)$, respectively. The reduction of the computation time is significant, since the sparsity of the solution induced by the exclusive lasso regularizer implies that $|{\cal K}|\ll n$. Note that when $m$ is moderate, we could use the Cholesky decomposition to solve the linear system \eqref{eq: appendix_newton} with the computational cost of $O(m^3+m^2|{\cal K}|)$. For the case when $|{\cal K}|\ll m$, we could use the Sherman-Morrison-Woodbury formula \cite{golub1996matrix} to further reduce the computational cost of solving \eqref{eq: appendix_newton}. To be specific, we have
\begin{align*}
\left(I_m+\hat{A}_{\cal K}{\rm Diag}(\hat{M}_1,\cdots,\hat{M}_l) \hat{A}_{\cal K}^T\right)^{-1}=I_m-\hat{A}_{\cal K}\left({\rm Diag}(\hat{M}_1^{-1},\cdots,\hat{M}_l^{-1})+\hat{A}_{\cal K}^T\hat{A}_{\cal K}
\right)^{-1}\hat{A}_{\cal K}^T,
\end{align*}
where for each $j \in \{1,\cdots,l\}$, 
\begin{align*}
\hat{M}_j^{-1} = I_{|K_j|}+\left( c_j^{-1}-v_j^Tv_j\right)^{-1} v_jv_j^T.
\end{align*}
Now, the cost of solving \eqref{eq: appendix_newton} is reduced to $O(|{\cal K}|^3+|{\cal K}|^2m)$. For the case when $m$ and $|{\cal K}|$ are both large, we could employ the conjugate gradient (CG) method to solve \eqref{eq: appendix_newton}, where the computational cost of each iteration of CG method is $O(m|{\cal K}|)$.

As one can see, in our implementation, we fully take advantage of the sparsity of the solution and the structure of the underlying Jacobian to highly reduce the computational cost of solving the Newton system \eqref{eq: cg-system}, which makes our SSN method efficient and robust for large scale problems.

\end{document}